\definecolor{lightBlue}{RGB}{136, 247, 244}
\def\scale{0.5}
\def\lengthP{12}
\def\tikbegin{\begin{tikzpicture}[scale=\scale, baseline=-1mm, -, 
]}
\tikzset{
string/.style={},
	redDot/.style={circle, draw, red, fill=red, inner sep=1.5pt},
	blackDot/.style={circle, draw, fill, inner sep=1.5pt},
	transp/.style={inner sep=1.5pt},
	blueEdge/.style={line width = 0.5mm, cyan, ->, -stealth},
	redEdge/.style={red, line width=0.5mm},
	redDotted/.style={red, densely dotted, line width=0.5mm},
	blueDotted/.style={cyan, loosely dotted, line width=0.5mm, ->, -stealth},
}
\def\caseOneNodes{
	\node[redDot, label={[label distance=\offsetNames]90:$x$}] (x) at (0, 0) {};
	\node[redDot, label={[label distance=\offsetNames]90:$x_i$}] (xi) at (\lengthKx, 0) {};
	\node[redDot, label={[label distance=\offsetNames]90:$x_{i+1}$}] (xi1) at (\lengthKx + \lengthEdge, 0) {};
	
	\node[redDot, label={[label distance=\offsetNames]90:$x_j$}] (xj) at (\lengthKx + \lengthEdge + \lengthKThree, 0) {};
	\node[redDot, label={[label distance=\offsetNames]90:$x_{j+1}$}] (xj1) at (\lengthKx + 2*\lengthEdge + \lengthKThree, 0) {};
	\node[redDot, label={[label distance=\offsetNames]90:$y$}] (y) at (2*\lengthKx + 2*\lengthEdge + \lengthKThree, 0) {};
	
	\node[redDot, label={[label distance=\offsetNames]0:$x'$}] (x') at (0, \lengthEdge) {};
	\node[redDot, label={[label distance=\offsetNames]0:$y'$}] (y') at (2*\lengthKx + 2*\lengthEdge + \lengthKThree, \lengthEdge) {};
}
\def\caseOneNodesUV{
	\node[redDot, label={[label distance=\offsetNames]90:$x$}] (x) at (0, 0) {};
	\node[redDot, label={[label distance=\offsetNames]90:$u_x$}] (xi) at (\lengthKx, 0) {};
	\node[redDot, label={[label distance=\offsetNames]90:$u_y$}] (xi1) at (\lengthKx + \lengthEdge, 0) {};
	
	\node[redDot, label={[label distance=\offsetNames]90:$v_x$}] (xj) at (\lengthKx + \lengthEdge + \lengthKThree, 0) {};
	\node[redDot, label={[label distance=\offsetNames]90:$v_y$}] (xj1) at (\lengthKx + 2*\lengthEdge + \lengthKThree, 0) {};
	\node[redDot, label={[label distance=\offsetNames]90:$y$}] (y) at (2*\lengthKx + 2*\lengthEdge + \lengthKThree, 0) {};
	
	\node[redDot, label={[label distance=\offsetNames]0:$x'$}] (x') at (0, \lengthEdge) {};
	\node[redDot, label={[label distance=\offsetNames]0:$y'$}] (y') at (2*\lengthKx + 2*\lengthEdge + \lengthKThree, \lengthEdge) {};
}
\def\caseOnePic{

\def\lengthKx{2.5}
\def\lengthKThree{\lengthP - 2*\lengthKx - 2*\lengthEdge}
\def\offsetNames{-2em}
\def\lengthEdge{1.5}

\tikbegin
	\caseOneNodes

	\draw[redDotted]
		(x) edge (xi)
		(xi1) edge (xj)
		(xj1) edge (y)
		;
	\draw[redEdge]
		(xi) edge (xi1)
		(xj) edge (xj1)
		;
	\draw[blueEdge]
		(x) edge (x')
		(y) edge (y')
		;
	\draw[blueDotted]
		(xi) edge[bend right=50] (x)
		(xj1) edge[bend left=50] (y)
		;
\end{tikzpicture}
}
\def\caseOnePicAugmFalse{

\def\lengthKx{2.5}
\def\lengthKThree{\lengthP - 2*\lengthKx - 2*\lengthEdge}
\def\offsetNames{-2em}
\def\lengthEdge{1.5}

\tikbegin
	\caseOneNodesUV
	
	\node[transp, label={[text=red, label distance=1.5*\offsetNames]270:$K'$}] (K') at (\lengthKx+\lengthEdge, 0) {};
	\node[transp, label={[text=red, label distance=\offsetNames]90:$K''$}] (K'') at (1.5*\lengthKx + 2*\lengthEdge + \lengthKThree, 0) {};

	\draw[redDotted]
		(x) edge (xi)
		(xi1) edge (xj)
		(xj1) edge (y)
		;
	\draw[redEdge]
		(xi) edge (xi1)
		(y) edge (y')
		;
	\draw[blueEdge]
		(x) edge (x')
		(xj1) edge (xj)
		;
	\draw[blueDotted]
		(xi) edge[bend right=50] (x)
		(y) edge[bend right=50] (xj1)
		;
\end{tikzpicture}
}
\def\caseOnePicAugmTrue{

\def\lengthKx{2.5}
\def\lengthKThree{\lengthP - 2*\lengthKx - 2*\lengthEdge}
\def\offsetNames{-2em}
\def\lengthEdge{1.5}

\tikbegin
	\caseOneNodesUV
	
	\node[transp, label={[text=red, label distance=\offsetNames]90:$K'$}] (K') at (0.5*\lengthKx, 0) {};
	\node[transp, label={[text=red, label distance=1.5*\offsetNames]270:$K''$}] (K'') at (\lengthKx + \lengthEdge + \lengthKThree, 0) {};

	\draw[redDotted]
		(x) edge (xi)
		(xi1) edge (xj)
		(xj1) edge (y)
		;
	\draw[redEdge]
		(x) edge (x')
		(xj1) edge (xj)
		;
	\draw[blueEdge]
		(y) edge (y')
		(xi) edge (xi1)
		;
	\draw[blueDotted]
		(x) edge[bend left=50] (xi)
		(xj1) edge[bend left=50] (y)
		;
\end{tikzpicture}
}
\def\caseTwoNodes{
	\node[redDot, label={[label distance=\offsetNames]90:$x$}] (x) at (0, 0) {};
	\node[redDot, label={[label distance=\offsetNames]90:$x_i$}] (xi) at (\lengthKx, 0) {};
	\node[redDot, label={[label distance=\offsetNames]90:$x_{i+1}$}] (xi1) at (\lengthKx + \lengthEdge, 0) {};
	\node[redDot, label={[label distance=\offsetNames]90:$y$}] (y) at ({2*(\lengthKx) + \lengthEdge}, 0) {};
	
	\node[redDot, label={[label distance=\offsetNames]0:$x'$}] (x') at (0, \lengthEdge) {};
	\node[redDot, label={[label distance=\offsetNames]0:$y'$}] (y') at ({2*(\lengthKx) + \lengthEdge}, \lengthEdge) {};

}
\def\caseTwoPic{

\def\lengthEdge{1.5}
\def\lengthKx{\lengthP/2 - \lengthEdge / 2}
\def\offsetNames{-2em}

\tikbegin
	\caseTwoNodes
	
	\draw[redDotted]
		(x) edge (xi)
		(xi1) edge (y)
		;
	\draw[redEdge]
		(xi) edge (xi1)
		;
	\draw[blueEdge]
		(x) edge (x')
		(y) edge (y')
		;
	\draw[blueDotted]
		(xi) edge[bend right=20] (x)
		(xi1) edge[bend left=20] (y)
		(x') edge[bend left=10, in=150, out=30] (y)
		;
\end{tikzpicture}
}
\def\caseThreePic{

\def\lengthEdge{1.5}
\def\lengthKx{\lengthP/2 - \lengthEdge / 2}
\def\offsetNames{-2em}

\tikbegin
	\caseTwoNodes
	
	\draw[redDotted]
		(x) edge (xi)
		(xi1) edge (y)
		;
	\draw[redEdge]
		(xi) edge (xi1)
		;
	\draw[blueEdge]
		(x) edge (x')
		(y) edge (y')
		;
	\draw[blueDotted]
		(xi) edge[bend right=20] (x)
		(xi1) edge[bend left=20] (y)
		;
\end{tikzpicture}
}
\def\caseTwoNodesUV{
	\node[redDot, label={[label distance=\offsetNames]90:$x$}] (x) at (0, 0) {};
	\node[redDot, label={[label distance=\offsetNames]90:$u$}] (xi) at (\lengthKx, 0) {};
	\node[redDot, label={[label distance=\offsetNames]90:$v$}] (xi1) at (\lengthKx + \lengthEdge, 0) {};
	\node[redDot, label={[label distance=\offsetNames]90:$y$}] (y) at ({2*(\lengthKx) + \lengthEdge}, 0) {};
	
	\node[redDot, label={[label distance=\offsetNames]0:$x'$}] (x') at (0, \lengthEdge) {};
	\node[redDot, label={[label distance=\offsetNames]0:$y'$}] (y') at ({2*(\lengthKx) + \lengthEdge}, \lengthEdge) {};

}
\def\caseThreePicAugm{

\def\lengthEdge{1.5}
\def\lengthKx{\lengthP/2 - \lengthEdge / 2}
\def\offsetNames{-2em}

\tikbegin
	\caseTwoNodesUV
	
	\node[transp, label={[text=red, label distance=\offsetNames]90:$K'$}] (K') at (0.5*\lengthKx, 0) {};
	\node[transp, label={[text=red, label distance=\offsetNames]90:$K_2$}] (K2) at (1.5*\lengthKx + \lengthEdge, 0) {};
	
	\draw[redDotted]
		(x) edge (xi)
		(xi1) edge (y)
		;
	\draw[redEdge]
		(x) edge (x')
		;
	\draw[blueEdge]
		(y) edge (y')
		(xi) edge (xi1)
		;
	\draw[blueDotted]
		(x) edge[bend left=20] (xi)
		(xi1) edge[bend left=20] (y)
		;
\end{tikzpicture}
}
\def\specialPaths{
\def\scale{0.7}
\def\lengthEdge{1.5}
\def\offsetNames{-2em}
\def\dotOffset{0.35}

\tikbegin
\node[blackDot, label={[label distance=\offsetNames]90:$v_{-1}$}] (v-1) at 
	(0, 0) {};
\node[blackDot, label={[label distance=\offsetNames]90:$v_0$}] (v0) at 
	(\lengthEdge, 0) {};
\node[transp, label={[label distance=1.3*\offsetNames]270:\textcolor{lightBlue}{$T_{b_1}$} }] (threeDots1) at 
	(2*\lengthEdge + \dotOffset, 0) {...};
\node[blackDot, label={[label distance=\offsetNames]90:$v_{a_1 - 1}$}] (a1') at 
	(3*\lengthEdge + 2*\dotOffset, 0) {};

\node[blackDot, label={[label distance=\offsetNames]90:$v_{a_1}$}] (a1) at 
	(4*\lengthEdge + 2*\dotOffset, 0) {};
\node[transp, label={[label distance=1.3*\offsetNames]270:\textcolor{cyan}{$T_{b_2}$} }] (threeDots2) at 
	(5*\lengthEdge + 3*\dotOffset, 0) {...};
\node[blackDot, label={[label distance=\offsetNames]90:$v_{a_2 - 1}$}] (a2') at 
	(6*\lengthEdge + 4*\dotOffset, 0) {};

\node[blackDot, label={[label distance=\offsetNames]90:$v_{a_2}$}] (a2) at 
	(7*\lengthEdge + 4*\dotOffset, 0) {};
\node[transp, label={[label distance=1.3*\offsetNames]270:\textcolor{blue}{$T_{b_3}$} }] (threeDots3) at 
	(8*\lengthEdge + 5*\dotOffset, 0) {...};
\node[blackDot, label={[label distance=\offsetNames]90:$v_{a_3 - 1}$}] (a3') at 
	(9*\lengthEdge + 6*\dotOffset, 0) {};

\node[blackDot, label={[label distance=\offsetNames]90:$v_{a_3}$}] (a3) at 
	(10*\lengthEdge + 6*\dotOffset, 0) {};

\draw[redEdge]
	(v-1) edge (v0)
	;
\draw[line width = 0.5mm, lightBlue, ->, -stealth]
	(v0) edge (threeDots1)
	(threeDots1) edge (a1')
	(a1') edge (a1)
	;
\draw[blueEdge]
	(a1) edge (threeDots2)
	(threeDots2) edge (a2')
	(a2') edge (a2)
	;
\draw[line width = 0.5mm, blue, ->, -stealth]
	(a2) edge (threeDots3)
	(threeDots3) edge (a3')
	(a3') edge (a3)
	;
\draw[line width = 0.5mm, lightBlue, dotted]
	(a1) edge[bend right=50] (v-1);
\draw[line width = 0.5mm, cyan, dotted]
	(a2) edge[bend right=50] (a1');
\draw[line width = 0.5mm, blue, dotted]
	(a3) edge[bend right=50] (a2');
\end{tikzpicture}
}
\def\specialPathsAugm{
\def\scale{0.7}
\def\lengthEdge{1.5}
\def\offsetNames{-2em}
\def\dotOffset{0.35}

\tikbegin
\node[blackDot, label={[label distance=\offsetNames]90:$v_{-1}$}] (v-1) at 
	(0, 0) {};
\node[blackDot, label={[label distance=\offsetNames]90:$v_0$}] (v0) at 
	(\lengthEdge, 0) {};
\node[transp, label={[label distance=1.3*\offsetNames]270:\textcolor{lightBlue}{$T_{b_1}$} }] (threeDots1) at 
	(2*\lengthEdge + \dotOffset, 0) {...};
\node[blackDot, label={[label distance=\offsetNames]90:$v_{a_1 - 1}$}] (a1') at 
	(3*\lengthEdge + 2*\dotOffset, 0) {};

\node[blackDot, label={[label distance=\offsetNames]90:$v_{a_1}$}] (a1) at 
	(4*\lengthEdge + 2*\dotOffset, 0) {};
\node[transp, label={[label distance=1.3*\offsetNames]270:\textcolor{cyan}{$T_{b_2}$} }] (threeDots2) at 
	(5*\lengthEdge + 3*\dotOffset, 0) {...};
\node[blackDot, label={[label distance=\offsetNames]90:$v_{a_2 - 1}$}] (a2') at 
	(6*\lengthEdge + 4*\dotOffset, 0) {};

\node[blackDot, label={[label distance=\offsetNames]90:$v_{a_2}$}] (a2) at 
	(7*\lengthEdge + 4*\dotOffset, 0) {};
\node[transp, label={[label distance=1.3*\offsetNames]270:\textcolor{blue}{$T_{b_3}$} }] (threeDots3) at 
	(8*\lengthEdge + 5*\dotOffset, 0) {...};
\node[blackDot, label={[label distance=\offsetNames]90:$v_{a_3 - 1}$}] (a3') at 
	(9*\lengthEdge + 6*\dotOffset, 0) {};

\node[blackDot, label={[label distance=\offsetNames]90:$v_{a_3}$}] (a3) at 
	(10*\lengthEdge + 6*\dotOffset, 0) {};

\draw[redEdge]
	(a3') edge (a3)
	;
\draw[line width = 0.5mm, lightBlue, ->, -stealth]
	(v0) edge (v-1)
	(threeDots1) edge (v0) 
	(a1') edge(threeDots1)
	;
\draw[blueEdge]
	(a1) edge (a1')
	(threeDots2) edge(a1)
	 (a2') edge (threeDots2)
	;
\draw[line width = 0.5mm, blue, ->, -stealth]
	(a2) edge (a2')
	(threeDots3) edge (a2)
	(a3') edge (threeDots3)
	;
\draw[line width = 0.5mm, lightBlue, dotted]
	(a1) edge[bend right=50] (v-1);
\draw[line width = 0.5mm, cyan, dotted]
	(a2) edge[bend right=50] (a1');
\draw[line width = 0.5mm, blue, dotted]
	(a3) edge[bend right=50] (a2');
\end{tikzpicture}
}
\def\xyzNodes{
	\node[redDot, label={[label distance=\offsetNames]90:$x$}] (x) at (0, 0) {};
	\node[redDot, label={[label distance=\offsetNames]90:$u$}] (u) at (\lengthKx + \lengthEdge, 0) {};
	\node[redDot, label={[label distance=\offsetNames]90:$y$}] (y) at (\lengthKx + 2*\lengthEdge, 0) {};
	
	\node[redDot, label={[label distance=\offsetNames]0:$x'$}] (x') at (0, \lengthEdge) {};
	\node[redDot, label={[label distance=\offsetNames]180:$y'$}] (y') at (\lengthKx + 3*\lengthEdge, 0) {};
	
	\node[redDot, label={[label distance=\offsetNames]0:$v$}] (v) at 
	({0.3*(\lengthKx)}, {-\tv}) {};
	\node[redDot, label={[label distance=\offsetNames]0:$z$}] (z) at 
	({0.3*(\lengthKx)}, {-\tv - \lengthEdge}) {};
	\node[redDot, label={[label distance=\offsetNames]180:$z'$}] (z') at 
	({0.3*(\lengthKx)}, {-\tv - 2*\lengthEdge}) {};
	
	\node[redDot] (x'') at (0, 2*\lengthEdge) {};
	\node[redDot] (y'') at (\lengthKx + 3*\lengthEdge, \lengthEdge) {};
	\node[redDot] (z'') at ({0.3*(\lengthKx) - \lengthEdge}, {-\tv - 2*\lengthEdge}) {};

	\node[transp] (t) at 
	({0.3*(\lengthKx)}, 0) {};
	\node[transp] (s) at 
	({0.5*(\lengthKx)}, 0.5*\lengthEdge) {};
}
\def\xyzPicOne{
	\def\lengthP{6}
	\def\lengthEdge{1.5}
	\def\lengthKx{\lengthP - \lengthEdge}
	\def\offsetNames{-2em}
	\def\tv{0.5*\lengthP}
	
	\tikbegin
	\xyzNodes
	\draw[redDotted]
	(x) edge (u)
	(u) edge (y)
	(t) edge (v)
	;
	\draw[redEdge]
	(u) edge (y)
	(v) edge (z)
	(x') edge (x'')
	(y') edge (y'')
	(z') edge (z'')
	;
	\draw[blueEdge]
	(x) edge (x')
	(y) edge (y')
	(z) edge (z')
	;
	\draw[blueDotted]
	(u) edge[bend right=10] (s)
	(s) edge[bend right=10] (x)
	(v) edge[bend right=20] (s)
	(x') edge[bend left=20] (y)
	(y') edge[bend left=20] (z)
	;
	
	\draw[blueDotted]
	;

\end{tikzpicture}
}
\def\xyzPicTwo{
	\def\lengthP{6}
	\def\lengthEdge{1.5}
	\def\lengthKx{\lengthP - \lengthEdge}
	\def\offsetNames{-2em}
	\def\tv{0.5*\lengthP}
	
	\tikbegin
	\xyzNodes
	\draw[redDotted]
	(x) edge (u)
	(u) edge (y)
	(t) edge (v)
	;
	\draw[redEdge]
	(v) edge (z)
	(x') edge (x'')
	(y') edge (y'')
	(z') edge (z'')
	(x) edge (x')
	;
	\draw[blueEdge]
	(y) edge (y')
	(z) edge (z')
	(u) edge (y)
	;
	\draw[blueDotted]
	(s) edge[bend left=10] (u)
	(x) edge[bend left=10] (s)
	(v) edge[bend right=20] (s)
	(x') edge[bend left=20] (y)
	(y') edge[bend left=20] (z)
	;
	
	\draw[blueDotted]
	;
	
\end{tikzpicture}
}
\def\xyzPicThree{
	\def\lengthP{6}
	\def\lengthEdge{1.5}
	\def\lengthKx{\lengthP - \lengthEdge}
	\def\offsetNames{-2em}
	\def\tv{0.5*\lengthP}
	
	\tikbegin
	\xyzNodes
	\draw[redDotted]
	(x) edge (u)
	(u) edge (y)
	(t) edge (v)
	;
	\draw[redEdge]
	(x') edge (x'')
	(y') edge (y'')
	(z') edge (z'')
	(x) edge (x')
	(y) edge (y')
	;
	\draw[blueEdge]
	(z) edge (z')
	(y) edge (u)
	(v) edge (z)
	;
	\draw[blueDotted]
	(u) edge[bend right=10] (s)
	(x) edge[bend left=10] (s)
	(s) edge[bend left=20] (v)
	(x') edge[bend left=20] (y)
	(y') edge[bend left=20] (z)
	;
	
	\draw[blueDotted]
	;
	
\end{tikzpicture}
}
\author{Sebastian Mies\thanks{Institute of Computer Science, Johannes Gutenberg University Mainz, email: smies@students.uni-mainz.de}     \, and Benjamin Moore\thanks{Charles University, Institute of Computer Science, Prague,  supported by project 22-17398S (Flows and cycles in graphs on surfaces) of Czech Science Foundation. Email: brmoore@iuuk.mff.cuni.cz.}}
\title{The Strong Nine Dragon Tree Conjecture is True for $d \leq k + 1$ }
\DeclarePairedDelimiter\ceil{\lceil}{\rceil}
\newcommand\rP{\prescript{r}{}\!P}
\newcommand\fracArb{\gamma}
\newcommand\explSG{H_{\mathcal T}}
\newcommand\density{\chi(d, k)}
\newcommand\sigGreater[1]{\sigma(>\!\!#1)}
\newcommand\sigStarGreater[1]{\sigma^*(>\!\!#1)}
\DeclareRobustCommand\caseOne{\overset{(1)}{\rightarrow}}
\DeclareRobustCommand\caseOneUndir{\overset{(1)}{\text{---}}}
\DeclareRobustCommand\caseTwo{\overset{(2)}{\rightarrow}}
\DeclareRobustCommand\caseThree{\overset{(3)}{\text{---}}}
\newcommand\iSig{i_{\sigma^*}}
\newcommand\KC{K_{\mathcal C}}
\declaretheoremstyle[%
  spaceabove=-2em,%
  spacebelow=6pt,%
  headfont=\normalfont\itshape,%
  postheadspace=1em,%
  qed=\qedsymbol%
]{mystyle}
\declaretheoremstyle[%
  spaceabove=-2em,%
  spacebelow=6pt,%
  headfont=\normalfont\itshape,%
  postheadspace=1em,%
  qed=\hfill \textit{\color{gray}(End of proof of the claim) }$\;\blacksquare$%
]{proofInProofStyle}
\declaretheorem[name={Proof},style=mystyle,unnumbered,
]{beweis}
\newtheorem{thm}{Theorem}[section] 
\newtheorem{lemma}[thm]{Lemma}
\newtheorem{satz}[thm]{Theorem}
\newtheorem{conj}[thm]{Conjecture}
\newtheorem{definition}[thm]{Definition}
\newtheorem{beob}[thm]{Observation}
\newtheorem{kor}[thm]{Corollary}
\newtheorem{notation}[thm]{Notation}
\newtheorem*{ack}{Acknowledgements}
\date{}
\begin{document}

\maketitle

\begin{abstract} 
     The arboricity $\Gamma(G)$ of an undirected graph $G = (V,E)$ is the minimal number $k$ such that $E$ can be partitioned into $k$ forests. Nash-Williams' formula states that $k = \ceil{ \fracArb(G) }$, where $\fracArb(G)$ is the maximum of $|E_H|/(|V_H| -1)$ over all subgraphs $(V_H, E_H)$ of $G$ with $|V_H| \geq 2$.
    
    The Strong Nine Dragon Tree Conjecture states that if $\fracArb(G) \leq k + \frac{d}{d+k+1}$ for $k, d \in \mathbb N_0$, then there is a partition of the edge set of $G$ into $k+1$ forests such that one forest has at most $d$ edges in each connected component.
    
    We settle the conjecture for $d \leq k + 1$. For $d \leq 2(k+1)$, we cannot prove the conjecture, however we show that there exists a partition in which the connected components in one forest have at most $d + \ceil{k \cdot \frac{d}{k+1}} - k$ edges.
    
    As an application of this theorem, we show that every $5$-edge-connected planar graph $G$ has a $\frac{5}{6}$-thin spanning tree. This theorem is best possible, in the sense that we cannot replace $5$-edge-connected with $4$-edge-connected, even if we replace $\frac{5}{6}$ with any positive real number less than $1$. This strengthens a result of Merker and Postle which showed $6$-edge-connected planar graphs have a $\frac{18}{19}$-thin spanning tree.
\end{abstract}
\SetAlgorithmName{Algorithm}{algorithm}{List of Heuristics}

\section{Introduction}

In this paper, graphs may have parallel edges, but no loops.
The celebrated Nash-Williams' Theorem states that a graph $G$ has a decomposition into $k$ forests if and only if $\fracArb(G) \leq k$ where 
$\fracArb(G) = \max_{H \subseteq G, v(H) \geq 2} \frac{e(H)}{v(H) - 1}$.
 Recall that a \textit{decomposition} is a partitioning of the edge set of a graph into subgraphs. Also, we use the notation that $e(H) = |E(H)|$ and $v(H) = |V(H)|$. The smallest $k$ for which such a decomposition exists is called the \textit{arboricity} of $G$, and hence naturally we call $\fracArb(G)$ the \textit{fractional arboricity} of $G$. Thus there is a connection between the edge density of subgraphs and the arboricity of $G$. Observe that if $\fracArb(G) = k + \varepsilon$ holds for $k \in \mathbb N$ and a small $\varepsilon > 0$ it suffices to remove only a few edges from the densest subgraphs of $G$ to make the resulting graph decompose into $k$ forests. Thus one could guess that we can find decompositions of $G$ into $k + 1$ forests where one has ``structure" depending on how small $\varepsilon$ is. As an example, Gonçalves showed in \cite{planar24} that planar graphs can be decomposed into three forests such that one forest has maximum degree at most 4. The Nine Dragon Tree Conjectures ask for similar structure as in Gonçalves' theorem, however in the more general situation where we only have information about the fractional arboricity:

\begin{satz}[Nine Dragon Tree Theorem \cite{ndtt}]	\label{satz:ndtt}
Let $G$ be a graph and $k$ and $d$ be positive integers. If $\fracArb(G) \leq k + \frac{d}{d + k + 1}$, then there is a decomposition into $k + 1$ forests, where one of the forests has maximum degree at most $d$.
\end{satz}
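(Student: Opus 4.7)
The plan is to argue by contradiction on a minimum counterexample, using an edge-exchange (augmenting path) argument of the flavour that recurs throughout this paper. Suppose $G$ is a counterexample with $|V(G)| + |E(G)|$ minimum. Since $\fracArb(G) \leq k+1$, Nash--Williams' theorem furnishes a decomposition of $E(G)$ into $k+1$ forests $F_0, F_1, \ldots, F_k$, and we single out $F_0$ as the forest whose maximum degree we aim to bound by $d$. Among all such decompositions we choose one minimizing the potential
\[
\Phi(F_0, \ldots, F_k) \;=\; \sum_{v \in V(G)} \max\{d_{F_0}(v) - d,\, 0\},
\]
possibly refined by a lexicographic tie-breaker. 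If $\Phi = 0$ we are done, so some vertex $v_0$ satisfies $d_{F_0}(v_0) \geq d+1$, and a contradiction must be derived.

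The core is a swap argument. For an edge $v_0 u \in F_0$ and each $i \in \{1, \ldots, k\}$, the unique $v_0 u$-path in $F_i \cup \{v_0 u\}$ exhibits candidate edges whose insertion into $F_0$ in place of $v_0 u$ preserves the forest property of every $F_j$. If any such single swap strictly decreases $\Phi$, the minimality of $\Phi$ is violated. Otherwise, an augmenting structure is built by iterating: each failed swap exposes a new vertex whose $F_0$-degree already exceeds $d$, which is added to an active set $U$ and explored in turn via its own fundamental paths in the $F_i$. The process terminates on a finite vertex set $U$; let $H = G[U]$.

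The obstruction to further swapping translates into structural constraints on the forests restricted to $U$: each $F_i[U]$ for $i \geq 1$ is forced to be a spanning forest of $H$ with a tightly controlled number of components, while $F_0[U]$ contains at least $d+1$ edges at every excess vertex and an additional tree's worth of edges elsewhere. Summing these contributions against $v(H) - 1$ should yield $e(H) > (k + \tfrac{d}{d+k+1})(v(H)-1)$, contradicting the hypothesis $\fracArb(G) \leq k + \tfrac{d}{d+k+1}$.

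The main obstacle is calibrating the augmenting process and the potential so that the density of $H$ exceeds \emph{precisely} the sharp threshold $k + \tfrac{d}{d+k+1}$; a naive edge count only recovers Nash--Williams. The tight ratio $d/(d+k+1)$ arises from matching the $d$ edges each excess vertex is allowed to keep in $F_0$ against the $k$ spanning trees contributed by $F_1, \ldots, F_k$ plus one extra edge from $F_0$, so the stopping rule and potential must be engineered so that at termination every excess vertex contributes exactly $d+1$ edges to $F_0[U]$ and each $F_i[U]$ for $i \geq 1$ is a spanning tree of $H$. Handling parallel edges and ensuring acyclicity of the untouched forests during swaps requires the lexicographic refinement of $\Phi$.
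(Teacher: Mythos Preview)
The paper does not prove this statement. Theorem~\ref{satz:ndtt} is quoted as the main result of Jiang and Yang~\cite{ndtt} and is used only as background; the present paper's contribution is Theorem~\ref{satz:mySndtc} on the \emph{Strong} Nine Dragon Tree Conjecture. So there is no ``paper's own proof'' to compare against.

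That said, your outline is in the right general spirit (minimal counterexample, potential-minimizing decomposition, exchange/augmenting argument, density contradiction on the explored subgraph) and is indeed the skeleton of~\cite{ndtt}. But what you have written is a plan, not a proof, and the gap you yourself flag in the last paragraph is the entire content of the theorem. Concretely: the claim that ``each $F_i[U]$ for $i\geq 1$ is forced to be a spanning forest of $H$ with a tightly controlled number of components'' and that ``every excess vertex contributes exactly $d+1$ edges to $F_0[U]$'' does not follow from the single-swap obstruction you describe. A failed swap at $v_0u$ along the $F_i$-fundamental cycle only tells you that the \emph{other} endpoint of some edge on that cycle has $F_0$-degree $\geq d$; it does not force connectivity of $F_i$ on $U$, nor does it prevent $F_0$-edges from leaving $U$. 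Getting the precise ratio $d/(d+k+1)$ in~\cite{ndtt} requires (i) rooting the $F_i$ as spanning trees directed toward a fixed root (cf.\ Lemma~\ref{lemma:minimalCounterExample} here), (ii) the ``special path'' machinery (Lemma~\ref{lemma:specialPaths} in this paper, which is Lemma~2.4 of~\cite{ndtt}) rather than single-edge swaps, and (iii) a legal-order potential that is lexicographic over component sizes, not the pointwise degree excess $\Phi$ you propose. Your $\Phi$ is too coarse: decreasing it can create new high-degree vertices elsewhere without any net progress toward the density bound.
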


\begin{conj}[Strong Nine Dragon Tree Conjecture \cite{sndtck1d2}]
Let  $G$ be a graph and $d, k \in \mathbb N_0$. If $\fracArb(G) \leq k + \frac{d}{d + k + 1}$ then there is a partition into  $k + 1$ forests, where in one forest every connected component has at most $d$ edges.
\end{conj}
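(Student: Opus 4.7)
The plan is to proceed by minimum counterexample together with an extremal decomposition. Suppose $(G, d, k)$ is a counterexample with $v(G) + e(G)$ minimum. Since $\fracArb(G) \leq k + \frac{d}{d+k+1} < k+1$, Nash-Williams' theorem guarantees a decomposition $(F_0, F_1, \dots, F_k)$ of $E(G)$ into $k+1$ forests. Among all such decompositions I would pick one minimizing a lexicographic potential: first, the excess $\Phi(F_0) = \sum_{K} \max(e(K)-d, 0)$ summed over components $K$ of $F_0$; second, subject to this, a tiebreaker such as the multiset of component sizes of $F_0$ in reverse-lex order. Standard minimality reductions then let me assume $G$ has no cut-vertex separating a small piece, that $G$ contains a subgraph where the density bound $k + \frac{d}{d+k+1}$ is tight, and that the conclusion holds on every proper subgraph.

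Assume $\Phi(F_0) > 0$ and fix a ``bad'' component $K_0$ of $F_0$ with $e(K_0) > d$. Root $F_0$ so that $K_0$ is rooted; every non-root $v$ has a unique parent edge $p_0(v) \in F_0$, and an analogous choice of orientation in each $F_i$ (with roots chosen to make the arguments below symmetric) gives parent edges $p_i(v) \in F_i$ for $v$ not a root of its $F_i$-tree. The key local move is a \emph{swap} on an edge $e = uv \in F_i$ with $i \geq 1$: replace $F_0$ by $F_0 - p_0(u) + e$ and $F_i$ by $F_i - e + p_0(u)$. This is valid provided $F_i + p_0(u)$ contains no cycle, and it potentially cuts $K_0$ into smaller pieces while enlarging some component of $F_i$ that is then absorbed into $F_0$. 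The effect on $\Phi$ must be analyzed carefully: we want swaps that strictly decrease the lexicographic potential.

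The next step is to iterate: if no single swap works, concatenate swaps into an \emph{augmenting sequence} analogous to those used in the Nine Dragon Tree Theorem and in the $d \leq k+1$ case proved in this paper. I would build a directed auxiliary digraph $D$ whose vertices are the components of $F_0$, with an arc $K \to K'$ whenever some swap on an edge incident to $K$ attaches an edge of $K'$ to $K$ in the new $F_0$. A path in $D$ from $K_0$ to a ``slack'' component $K^*$ (one with $e(K^*) \leq d-1$ that can absorb an extra edge without becoming bad) would give a sequence of swaps strictly decreasing $\Phi$, contradicting minimality. The existence of such a path must come from a global double-counting argument driven by $\fracArb(G) \leq k + \frac{d}{d+k+1}$: writing $W$ for the set of vertices reachable from $K_0$ in $D$, the forests $F_1, \dots, F_k$ together with the bad components inside $W$ contribute too many edges to the subgraph induced by $W$ unless slack components exist inside $W$.

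The hard part will be carrying out this global counting argument for arbitrary $d$. When $d \leq k+1$ the bad component $K_0$ is comparable in size to the number of auxiliary forests, so reachability in $D$ is controlled by pigeonhole on vertices of $K_0$ and the density inequality collapses cleanly; this is essentially what the authors exploit, and their extension to $d \leq 2(k+1)$ already pays a penalty of $\lceil k \cdot d/(k+1) \rceil - k$ extra edges, showing that the naive analysis degrades. For general $d$, one needs a more refined invariant that tracks not only sizes but also \emph{where} in a component an absorbed edge lands, so that iterated swaps cannot cycle and the potential strictly descends. I expect the right invariant to be a depth-weighted sum in the rooted forest, chosen so that any swap along an arc of $D$ monotonically decreases it, combined with a careful treatment of the case when swaps create new bad components elsewhere. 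Controlling this cascade, and matching it to the exact threshold $d/(d+k+1)$ rather than a weaker one, is the principal obstacle and is precisely why the conjecture remains open beyond the ranges established here.
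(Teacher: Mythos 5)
There is a genuine gap, and you acknowledge it yourself in your final paragraph. What you have written is a roadmap, not a proof: you set up a minimal counterexample, a lexicographic potential, a swap operation, and a reachability digraph, and then explicitly concede that carrying out the global counting argument for arbitrary $d$ is ``the principal obstacle'' and ``precisely why the conjecture remains open.'' That concession is correct --- the statement you are being asked to prove is a \emph{conjecture}, and this paper does not prove it either. The paper proves it only for $d \leq k+1$ (Theorem~\ref{satz:mySndtc}), and for $k+1 < d \leq 2(k+1)$ obtains a strictly weaker bound of $d + \lceil k\cdot \frac{d}{k+1}\rceil - k$ edges per component. So no comparison to ``the paper's own proof'' of this statement is possible, because none exists; any claimed proof of the full conjecture would need to go substantially beyond the techniques in the paper, and your proposal does not supply the missing idea.

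Beyond that, a few details of your setup diverge from what actually makes the partial result work, and the divergence matters. The paper does not take an arbitrary Nash-Williams decomposition into $k+1$ forests: Lemma~\ref{lemma:minimalCounterExample} shows a vertex-minimal counterexample decomposes into $k$ \emph{spanning trees} $T_1,\dots,T_k$ plus one forest $F$, and this is essential --- the spanning trees are directed toward a fixed root $r$, giving every non-root vertex a unique parent in each $T_i$ and making the exchange lemmas (Lemma~\ref{lemma:exchange}, Lemma~\ref{lemma:executeExchange}) and the special-path machinery (Lemma~\ref{lemma:specialPaths}) clean. Your $F_1,\dots,F_k$ are only forests, so parent edges $p_i(v)$ need not exist and your swap $F_0 - p_0(u) + e$, $F_i - e + p_0(u)$ is not well defined or not valid in general. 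The paper's residue function $\rho$ is also not a scalar excess sum: it is the tuple $(\rho_{v(G)-1},\dots,\rho_{d+1})$ counting oversized components by exact size, compared lexicographically, and this finer invariant (together with the legal order $\sigma^*$ as a secondary tiebreaker) is what allows one to show a single swap strictly improves the configuration without causing a cascade. Finally, the density argument is not a global double count over reachable components; it is a per-component bound ($\frac{e(\KC)}{v(\KC)} \geq \frac{d}{d+k+1}$) proved by bounding the number of small children of each red component in the exploration subgraph $H_{\mathcal T^*}$ via the case analysis of Lemma~\ref{sd:cases}, and it is exactly this bound that the authors cannot push past $d \leq k+1$ (respectively, $2k$ small children when $d\leq 2(k+1)$). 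So the obstacle you name is real, but to even reach the boundary of what is known you would first need to restore the spanning-tree structure, the tuple-valued potential, and the special-path exchange lemma that your sketch elides.
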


Both statements were proposed by Montassier et. al \cite{sndtck1d2}, who proved the $(k,d)=(1,1)$ and $(k,d) = (1,2)$ case of the Nine Dragon Tree Theorem. Prior to the full solution, various partial results were obtained towards the Nine Dragon Tree Theorem, for instance the $d=1$ case \cite{Yangmatching} and the case where $k \leq 2$ \cite{ndtk2,Kostochkaetal}, before the Nine Dragon Tree Theorem was proven by a beautiful argument of Jiang and Yang \cite{ndtt}. Note that when $d = 1$, the Nine Dragon Tree Theorem implies the Strong Nine Dragon Tree Conjecture, and hence the Strong Nine Dragon Tree Conjecture is known when $d=1$. Besides this case, prior to this paper, the Strong Nine Dragon Tree Conjecture has only been proven when  $(k, d) = (1, 2)$ \cite{Kostochkaetal}.
It is important to note that the Nine Dragon Tree Theorem is best possible in the following sense:

\begin{satz}[\cite{sndtck1d2}]
For any positive integers $k$ and $d$ there are arbitrarily large graphs $G$ and a set $S = 
\{e_{1},\ldots,e_{d+1}\} \subseteq E(G)$ of $d+1$ edges such that $\fracArb(G-S) = k + \frac{d}{k+d+1}$ and $G$ does not decompose into $k+1$ forests where one of the forests has maximum degree $d$. 
\end{satz}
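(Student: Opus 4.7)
The plan is to exhibit, for each pair $(k,d)$, an explicit infinite family $\{G_n\}_{n \geq 1}$ together with a canonical $(d+1)$-edge set $S \subseteq E(G_n)$, and to verify the density property and the indecomposability property separately.

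For the construction, I would start from a ``critical gadget'' $H_0$ on $v_0 = k+d+2$ vertices with $e_0 = k(k+d+1)+d$ edges, chosen so that every proper subgraph of $H_0$ has density strictly below $k + \frac{d}{k+d+1}$ and $H_0$ itself has density exactly this value. A natural candidate is to place $k$ edge-disjoint spanning trees on the vertex set of $H_0$ and then add a short path of $d$ extra edges positioned so as not to create any denser subgraph. To scale the construction up, I would take $n$ vertex-disjoint copies of $H_0$, identify one distinguished ``portal'' vertex from each copy with a single common vertex $v^*$, and finally attach the $d+1$ edges of $S$ at $v^*$ to form $G_n$.

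For the density claim $\fracArb(G_n - S) = k + \frac{d}{k+d+1}$, the verification exploits the tree-gluing structure at $v^*$: any subgraph $H' \subseteq G_n - S$ splits into ``pieces'' lying inside individual copies, and the $-1$ normalisation in $e(H')/(v(H')-1)$ ensures that gluing at a single common vertex cannot push the density above the maximum density of the pieces. Since each piece sits inside one copy of $H_0$ with density at most $k + \frac{d}{k+d+1}$, the claim follows, with equality attained by any single full copy.

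For the indecomposability claim I would assume a decomposition $G_n = F_0 \cup F_1 \cup \cdots \cup F_k$ with $F_0$ of maximum degree at most $d$ and derive a contradiction. The edges of $S$ at $v^*$ would be placed so that within each copy $H_0^{(i)}$ the restricted decomposition is forced to commit at least one ``designated'' edge incident to $v^*$ to $F_0$; otherwise the edges of $H_0^{(i)}$ together with the relevant edges of $S$ would overflow the forest capacity $(k+1)(v_0 - 1)$ available to $F_1, \ldots, F_k$ on that copy. Summing over the $n$ copies would yield at least $n$ edges of $F_0$ incident to $v^*$, contradicting $\deg_{F_0}(v^*) \leq d$ as soon as $n > d$.

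The hardest step is calibrating the gadget $H_0$ and the placement of $S$ simultaneously: $H_0$ must be rigid enough that restoring $S$ forces a specific edge incident to the portal into $F_0$ within every copy, while at the same time $G_n - S$ must contain no subgraph whose density exceeds $k + \frac{d}{k+d+1}$. These two demands pull in opposite directions, and balancing them — particularly when $d$ and $k$ are comparable in size, so the slack $(k+1)(v_0-1) - e_0 = k+1$ in each copy is neither trivially small nor large — is the technical heart of the construction.
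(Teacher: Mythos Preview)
The paper does not prove this statement at all: it is quoted verbatim as a result of Montassier, Ossona de Mendez, Raspaud and Zhu \cite{sndtck1d2} and is included only to record that the Nine Dragon Tree Theorem is sharp. There is therefore no ``paper's own proof'' to compare your proposal against.

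As for the proposal itself, the density half of your plan is sound --- gluing copies of a tight gadget at a single vertex does preserve the maximum of $e(H)/(v(H)-1)$ --- but the indecomposability half has a real gap that you yourself flag without resolving. Each copy $H_0^{(i)}$ has $e_0 = k(v_0-1)+d$ edges, so any decomposition into $k+1$ forests must place at least $d$ edges of that copy into $F_0$; however, nothing in your construction forces any of those $d$ edges to be incident to the portal vertex $v^*$. Your stated mechanism (``otherwise the edges of $H_0^{(i)}$ together with the relevant edges of $S$ would overflow the forest capacity $(k+1)(v_0-1)$ available to $F_1,\ldots,F_k$'') is miscounted --- the capacity of $F_1,\ldots,F_k$ on $v_0$ vertices is $k(v_0-1)$, not $(k+1)(v_0-1)$ --- and in any case the set $S$ has only $d+1$ edges total, so once $n>d+1$ most copies see no edge of $S$ at all and your overflow argument cannot apply to them. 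The construction in \cite{sndtck1d2} avoids this difficulty by a different design: rather than relying on local overflow within each copy, one arranges that in \emph{any} decomposition of $G-S$ the forest $F_0$ already has $d$ edges at a specified vertex, so that restoring even one edge of $S$ at that vertex breaks the degree bound. Getting that kind of rigidity out of a gluing construction requires more than a generic ``$k$ spanning trees plus a $d$-path'' gadget.
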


Our contribution is a proof of the Strong Nine Dragon Tree Conjecture when $d \leq k+1$, and bounds when $k + 1 < d \leq 2(k+1)$:

\begin{satz}\label{satz:mySndtc}
For $d \leq k + 1$ the Strong Nine Dragon Tree Conjecture is true. If $k + 1 < d \leq 2(k + 1)$ then for any graph $G$ with fractional arboricity at most $k + \frac{d}{k+d+1}$, there is a decomposition into $k + 1$ forests where in one forest every connected component has at most $d + \ceil{k \cdot \frac{d}{k+1}} - k$ edges.
\end{satz}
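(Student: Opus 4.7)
My plan is to follow the exchange/discharging style of argument that has been successful for the Nine Dragon Tree Theorem of Jiang and Yang, adapted for the component-size restriction rather than the maximum-degree restriction. I would take a minimum counterexample $G$ (minimizing, say, $v(G)+e(G)$) and, among all decompositions $(F_0,F_1,\ldots,F_k)$ of $E(G)$ into $k+1$ forests, fix one that lexicographically minimizes the multiset of sizes of components of $F_0$ that exceed the claimed bound (which is $d$ in the first case and $d+\lceil kd/(k+1)\rceil-k$ in the second). If the theorem fails, some component $K$ of $F_0$ is oversized; the overall goal is to produce a subgraph of $G$ whose edge density exceeds $k+\frac{d}{k+d+1}$, contradicting the hypothesis on $\fracArb(G)$.

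The core technique is an exchange procedure. For each edge $e\notin F_0$ with both endpoints in a component $K$ under consideration, adding $e$ to $F_0$ creates a unique cycle, and swapping $e$ for any cycle edge produces another valid decomposition. If such a swap strictly decreases the potential we reach a contradiction, so in the remaining case the swap either keeps the potential equal or witnesses that some new component is again oversized. Iterating, I would build a reachability family $\mathcal{K}$ of oversized $F_0$-components, joined by short augmenting paths in the other forests $F_1,\ldots,F_k$; the figures in the preamble (with colored trees $T_{b_1},T_{b_2},T_{b_3}$ strung along a swap spine) indicate the multi-tree structure this procedure naturally generates. The swap direction must be chosen to avoid merging two oversized components into an even larger one, which is precisely why the procedure must track several layers of alternating trees rather than a single augmenting path.

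Once $\mathcal{K}$ is closed under exchanges, I would assemble the dense subgraph $H$ as the union of the vertex sets of the components in $\mathcal{K}$ together with the connecting paths, and count its edges two ways. A lower bound comes from each component contributing at least $d+1$ (respectively $d+\lceil kd/(k+1)\rceil-k+1$) edges to $F_0$, together with the edges contributed by the $k$ other forests running along the reachability paths. The upper bound from $\fracArb(G)\leq k+\frac{d}{d+k+1}$ gives at most $\bigl(k+\frac{d}{d+k+1}\bigr)(v(H)-1)$ edges; balancing the two and solving for the critical component-size at which the exchange argument closes yields exactly $d$ when $d\leq k+1$ and the shifted bound $d+\lceil kd/(k+1)\rceil-k$ in the wider range $k+1<d\leq 2(k+1)$.

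The hardest step will be the inductive control of the exchange procedure in the regime $d>k+1$: as $d$ grows relative to $k$, each augmenting step adds less marginal density to $H$, so the reachability family can grow without forcing the required contradiction, and one must argue very carefully that for $d\leq 2(k+1)$ the structure still produces enough density, but only for the weakened target. The extra slack $\lceil kd/(k+1)\rceil-k$ is precisely what bridges the natural density bound delivered by the exchange argument and the bound the conjecture asks for, and pinning down this arithmetic—while simultaneously ensuring that the augmenting trees can always be extended or else yield a swap that reduces the potential—will be the technical heart of the proof.
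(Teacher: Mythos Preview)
Your outline has the right genre (minimal counterexample, exchange argument, density contradiction) but it is missing the specific machinery that actually closes the argument, and in a couple of places the picture you describe is not what works.

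First, the two structural choices you make are too coarse. The paper does not just pick a decomposition minimizing the multiset of oversized component sizes; it uses the fact that a vertex-minimal counterexample decomposes into $k$ \emph{spanning trees} $T_1,\ldots,T_k$ (oriented toward a fixed root $r$ in an oversized component $R^*$) plus a forest $F$, and then optimizes \emph{two} nested potentials: the residue function $\rho(F)$, and, subject to that, a \emph{legal order} $\sigma^*$ of the red components of an exploration subgraph. Most exchanges in the proof do not decrease $\rho$; they decrease $\sigma^*$. Without this secondary potential you will not be able to rule out the bad configurations.

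Second, your description of the density witness is off. The dense subgraph is not a family of \emph{oversized} components joined by paths. It is the exploration subgraph $H_{\mathcal T^*}$: everything reachable from $R^*$ by following outgoing blue (tree) edges and red (forest) edges. Almost all red components in it are small, not oversized. The density argument then groups each non-small red component $K$ with its assigned \emph{small} children and shows $e(K_{\mathcal C})/v(K_{\mathcal C}) \geq \chi(d,k)$ for every such group; averaging gives the contradiction. The per-component bound ``at least $d+1$ edges'' that you propose is not the quantity being counted.

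Third, and most importantly, the heart of the proof---the step you flag as hardest but do not describe---is a concrete structural lemma: for each spanning tree $T_i$, a non-small red component $K$ has at most one small child generated by $T_i$ when $d\le k+1$, and at most two when $k+1<d<3(k+1)$. This is obtained by taking two small children $C_1,C_2$ of $K$ generated by edges $(x,x'),(y,y')\in E(T_i)$, analyzing the red path $P_F(x,y)$ inside $K$, and showing that one of three explicit cases ($x\caseOne y$, $x\caseTwo y$, $x\caseThree y$) must hold, each determined by how descendants in $T_i$ interleave along that path. Each case is then eliminated (or forced into a restricted form) by performing a single exchange and either decreasing $\rho$ or, via a minimal special path, decreasing $\sigma^*$. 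None of this is a generic ``closure under augmenting paths''; it is a case analysis with carefully chosen edge swaps, and the legal-order potential is indispensable for it. Your proposal does not contain this idea, and without it the counting you sketch will not yield the claimed thresholds.
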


We note as shown in \cite{sndtck1d2}, Theorem \ref{satz:mySndtc} implies the stronger statement where for any vertex $v \in V(G)$, there is a decomposition satisfying the outcome of Theorem \ref{satz:mySndtc}, and $v$ is an isolated vertex in the forest with bounded component size.

This is the first case of the Strong Nine Dragon Tree Conjecture known when $d \neq 1$ and $(k,d) \neq (1,2)$. We do note however there are some special cases of the Strong Nine Dragon Tree Conjecture that are known. For instance, an approximation scheme by Blumenstock and Fischer \cite{approxArb} transforms a decomposition of $k$ pseudoforests of a simple graph into $k + 1$ forests with one forest having at most $k$ edges in each component. Here, recall that a \textit{pseudoforest} is a graph where each connected component contains at most one cycle. This implies that the Strong Nine Dragon Tree Conjecture is true for $d = k$ for all graphs where $\Gamma(G) - 1$ is the minimum number of pseudoforests needed in a pseudoforest decomposition of $G$. The use of pseudoforests as an approach to the Nine Dragon Tree Theorem is well documented. In fact, in a pivotal paper, Fan et al. \cite{ndttPsfs} proved a pseudoforest analogue of the Nine Dragon Tree Theorem, and the proof technique used was essential to the eventual resolution of the Nine Dragon Tree Theorem. More recently, a pseudoforest analogue of the Strong Nine Dragon Tree Conjecture was proven \cite{sndtcPsfs}, and a digraph version of the Nine Dragon Tree Conjecture was also proposed \cite{digraphndt}.

As an application of Theorem \ref{satz:mySndtc} we partially resolve an (implicit) conjecture in \cite{boundeddiameter}. They asked if there exists a constant $d$ such that every planar graph of girth at least five decomposes into two forests $F_{1},F_{2}$ such that every component of $F_{i}$ has diameter at most $d$. This conjecture implies the following weaker conjecture about $\varepsilon$-thin trees.

\begin{definition}
Let $\varepsilon \in (0,1)$ be a real number. Let $G$ be a connected graph, and $T$ a spanning tree of $G$. We say that $T$ is an $\varepsilon$-thin tree if for every cut-set $S \subseteq E(G)$, we have 
\[ \frac{|E(T) \cap S|}{|S|} \leq \varepsilon\]
\end{definition}

\begin{conj}[\cite{boundeddiameter}]
\label{thintreeconjecture}
There exists an $\varepsilon \in (0,1)$ such that every $5$-edge-connected planar graph admits two edge-disjoint $\varepsilon$-thin trees. 
\end{conj}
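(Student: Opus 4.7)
The plan is to derive the two edge-disjoint thin trees by combining Theorem~\ref{satz:mySndtc} with planar duality. The dual $G^*$ of a 5-edge-connected planar graph $G$ has girth at least $5$, and hence, by Euler's formula for planar graphs of girth at least $5$, every subgraph $H^* \subseteq G^*$ with $v(H^*) \ge 3$ satisfies $e(H^*) \le \tfrac{5}{3}(v(H^*)-2)$, so $\gamma(G^*) \le 5/3$. Applying Theorem~\ref{satz:mySndtc} to $G^*$ with $k=1$ and $d=4$, which lies in the relaxed regime $k+1 < d \le 2(k+1)$, yields a decomposition $E(G^*) = F_1^* \cup F_2^*$ into two forests such that every connected component of $F_1^*$ has at most $d + \ceil{kd/(k+1)} - k = 5$ edges.

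From this decomposition I would extract a first spanning tree $T_1$ of $G$ via planar duality: spanning trees of $G$ correspond bijectively to complements in $E(G^*)$ of spanning trees of $G^*$, and $\varepsilon$-thinness of $T_1$ in $G$ is equivalent to the complementary dual spanning tree $T_1^*$ covering at least $(1-\varepsilon)|C|$ edges of every cycle $C$ in $G^*$ (since minimal cuts in $G$ correspond to cycles in $G^*$). Extend $F_2^*$ to a spanning tree of $G^*$ by adding exactly one edge from each component of $F_1^*$; because the components of $F_1^*$ have at most $5$ edges, at most $4/5$ of the $F_1^*$-edges lying on any dual cycle are discarded into the cotree, which, after a short cycle-intersection calculation, yields a $\tfrac{5}{6}$-thin primal spanning tree and recovers the Merker--Postle-type improvement the paper advertises as its main application.

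For the second tree $T_2$, edge-disjoint from $T_1$ and also $\varepsilon$-thin, my plan is to combine Nash-Williams--Tutte (which guarantees some second spanning tree $T_2'$ edge-disjoint from $T_1$, since any 4-edge-connected graph carries two edge-disjoint spanning trees) with a symmetric-difference swapping argument steered by the dual forest decomposition. Starting from $T_2'$, iteratively exchange an edge in $T_2'$ with one outside of $T_1 \cup T_2'$ so as to reduce the largest ratio $|T_2' \cap S|/|S|$ over cuts $S$, using the bounded components of $F_1^*$ (which survive after deleting the edges of $T_1$ from $G^*$, up to a controlled perturbation) to certify that a suitable exchange is always available; the matroid-intersection structure of pairs of spanning trees in $G$ should make this process terminate at a tree $T_2$ with the same $\varepsilon = 5/6$ thinness bound.

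The principal obstacle will be the construction of $T_2$. Whereas the thinness of $T_1$ is immediately traceable to the bounded components of a single application of Theorem~\ref{satz:mySndtc}, no analogous structure is automatically available for the second tree once the edges of $T_1$ have been carved out: $G - T_1$ need not be 5-edge-connected, so one cannot simply reapply the construction to the remaining graph. A more promising route would be a \emph{simultaneous} construction from a single decomposition of $G^*$ into three forests, two of which have bounded components, allowing two edge-disjoint spanning-tree augmentations in parallel. This would require a genuine strengthening of Theorem~\ref{satz:mySndtc} (two bounded-component forests rather than one), which the methods of the present paper do not appear to deliver, and I expect this is the technical reason the conjecture remains open in full generality.
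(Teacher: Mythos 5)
The statement you were asked to address is Conjecture~\ref{thintreeconjecture}, which the paper explicitly leaves open; the paper's actual contribution is the weaker Theorem~\ref{thintrees} giving a single $\tfrac{5}{6}$-thin tree. Your first two paragraphs essentially reproduce the paper's proof of that theorem: dualize, bound the fractional arboricity of the girth-$5$ dual by Euler's formula, apply Theorem~\ref{satz:mySndtc} with $k=1$, $d=4$ to obtain a forest decomposition of $G^*$ with one forest $F_1^*$ having components of at most $5$ edges, and use cut--cycle duality together with the observation that any cycle of $G^*$ meets each component of $F_1^*$ in a subpath of length at most $5$, so at least one sixth of its edges lie in the other forest. (The paper carries this out by working directly with the bounded-component forest, pruning it to a tree; you instead extend the complementary forest to a dual spanning tree and take the primal cotree. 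These are the same argument in slightly different clothing, and your arithmetic $d + \ceil{kd/(k+1)} - k = 5$ matches Lemma~\ref{girth5decomp}.)

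The second tree is where the proposal genuinely fails, as you yourself candidly observe. The Nash-Williams--Tutte tree $T_2'$ edge-disjoint from $T_1$ carries no a priori thinness guarantee, and the proposed exchange process of lowering $\max_S |E(T_2') \cap S|/|S|$ by swapping has no mechanism forcing convergence to any fixed ratio, let alone $\tfrac{5}{6}$; the assertion that the bounded components of $F_1^*$ ``survive after deleting the edges of $T_1$, up to a controlled perturbation'' is not substantiated and is, in fact, the whole difficulty. Worse, $G - E(T_1)$ is in general only $4$-edge-connected, and by Thomassen's examples cited in the paper, $4$-edge-connected planar graphs need not admit \emph{any} $\varepsilon$-thin tree, so one cannot simply recurse. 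Your closing assessment is accurate: without a strengthening of Theorem~\ref{satz:mySndtc} that controls two forests simultaneously, these methods do not reach Conjecture~\ref{thintreeconjecture}, which is precisely why the paper states it only as a conjecture and proves just the single-tree Theorem~\ref{thintrees}.
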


Thomassen showed that one cannot hope to strengthen $5$-edge-connectivity to $4$-edge-connectivity, as there is no $\varepsilon \in (0,1)$ such that every planar $4$-edge-connected graph has even a single $\varepsilon$-thin tree (see \cite{boundeddiameter} for a discussion). Towards the conjecture, we prove:

\begin{satz}
\label{thintrees}
Every $5$-edge-connected planar graph admits a $\frac{5}{6}$-thin tree. 
\end{satz}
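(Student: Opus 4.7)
The plan is to apply Theorem \ref{satz:mySndtc} to the planar dual $G^*$ of $G$ with parameters $k = 1$ and $d = 4$, and then convert the resulting forest decomposition into a $\tfrac{5}{6}$-thin spanning tree of $G$ via planar duality. Since $G$ is $5$-edge-connected, every bond of $G$ has size at least $5$, and these are in bijection with the cycles of $G^*$, so $G^*$ is a simple, connected planar graph of girth at least $5$. A routine Euler's formula calculation (applied componentwise to handle isolated vertices and tree components) then shows that $\fracArb(G^*) \leq \tfrac{5}{3} = 1 + \tfrac{4}{1+4+1}$.

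I would next apply Theorem \ref{satz:mySndtc} (noting $d = 4 \leq 2(k+1) = 4$) to obtain a decomposition $E(G^*) = F_1 \sqcup F_2$ into two forests in which every component of $F_2$ has at most $d + \lceil k \cdot d/(k+1) \rceil - k = 4 + 2 - 1 = 5$ edges. Since $G^*$ is connected and $F_1$ is a forest, $F_1$ extends (by adding suitable edges of $F_2$) to a spanning tree $T^*$ of $G^*$; set $T := E(G) \setminus T^*$. By planar tree--cotree duality, $T$ is a spanning tree of $G$.

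To verify that $T$ is $\tfrac{5}{6}$-thin, I would first reduce to bonds: every cut of $G$ is a disjoint union of bonds, because under planar duality, cuts of $G$ correspond to Eulerian subgraphs of $G^*$, which by Veblen's theorem decompose into edge-disjoint simple cycles, each dual to a bond of $G$. So it suffices to prove $|T \cap B| \leq \tfrac{5}{6}|B|$ for each bond $B$. Such a $B$ is dual to a simple cycle $C$ of $G^*$ with $|B| = |C|$ and $|T \cap B| = |C| - |T^* \cap C|$, and since $T^* \supseteq F_1$, the required inequality reduces to $|C \cap F_1| \geq |C|/6$.

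The central observation, and the only step carrying any real content, is this: in any cycle $C$ of $G^*$, the $F_1$- and $F_2$-edges form alternating maximal runs as one traverses $C$ cyclically, each maximal $F_2$-run is contained in a single component of $F_2$ (since consecutive edges share a vertex) and hence has length at most $5$, and both run types must appear since neither forest contains a cycle. If $r$ denotes the number of maximal $F_2$-runs in $C$, then the $r$ interleaving $F_1$-runs contribute at least $r$ $F_1$-edges, so $|C \cap F_1| \geq r \geq |C \cap F_2|/5$, yielding $|C \cap F_1| \geq |C|/6$. Beyond this cycle-traversal observation and the careful application of Theorem \ref{satz:mySndtc} at the tight boundary $d = 2(k+1)$, I do not anticipate any serious obstacle.
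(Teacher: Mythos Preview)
Your proof is correct and follows essentially the same approach as the paper: apply Theorem~\ref{satz:mySndtc} with $k=1$, $d=4$ to the dual $G^*$ (which has girth $\geq 5$ and $\fracArb \leq 5/3$), and then use cycle--cut duality together with the bound of $5$ edges per component of the sparse forest to get the $\tfrac{5}{6}$-thin tree. The only cosmetic differences are that you spell out the tree--cotree duality and the run-counting argument on cycles more explicitly, and you extend $F_1$ to a spanning tree before dualizing rather than pruning afterwards; both variants yield the same conclusion.
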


The proof of this follows the ideas in \cite{boundeddiameter}, and the needed new ingredient is that planar graphs of girth at least five decompose into two forests, one of which has each component containing at most five edges.

Our proof of Theorem \ref{satz:mySndtc} follows the general framework developed in \cite{ndttPsfs,sndtcPsfs,ndtt,Yangmatching}. First, we show that if Theorem \ref{satz:mySndtc} is not true, there is a vertex minimal counterexample that has a decomposition into $k$ spanning trees and another forest. Then, over all decompositions into $k$ spanning trees and a forest, we choose one decomposition that is as close as possible to satisfying Theorem \ref{satz:mySndtc} with respect to a certain function which we define later. After this, we follow the reconfiguration approach used in the proof of the Nine Dragon Tree Theorem, where we try and massage our decomposition into one which satisfies the theorem.
If a component of the special forest with too many edges is ``near" to a component with few edges, we argue that we can exchange some edges between the forests in certain cases to either decrease the size of the big component, or get ``closer" to being able to reduce the size of the big component.
If there are no small components next to a big component, we show how to gradually move edges away from neighboring components until we end up in a situation where we have a big component next to small components. Repeating this procedure will eventually result in the theorem. While this techinque has been used many times, the advancement in the present paper is to dig deeper into how the exchanges of tree edges can actually behave. This allows us to gain more control over the exchanges, which allows to reduce the number of ``small" components near big components over the previous papers.

The paper is organized as follows. In Section 2 we give all of the basic definitions, as well as say how we pick the minimal counterexample. In Section 3 we go over the special path techinque from the Nine Dragon Tree Theorem and note some critical corollaries of this techinque. In Section 4 we describe how we will exchange edges between trees. In Sections 5 and 6 we build structural lemmas to bound the number of small components near big components. In Section 7 we use these tools to prove the Strong Nine Dragon Tree Conjecture when $d \leq k+1$. In Section 8 we prove Theorem \ref{satz:mySndtc}. In Section 9 we prove Theorem \ref{thintrees}.

\section{Defining the counterexample}

The goal of this section is to set up everything we need to define our minimal counterexample. First we pin down some basic notation. For a path $P$ with $k$ vertices, we will write $P = [v_{1},\ldots,v_{k}]$ where $v_{i}v_{i+1}$ is an edge for all $i \in \{1,\ldots,k-1\}$. Given two paths $P_{1} = [v_{1},\ldots,v_{k}]$ and $P_{2} = [v_{k},\ldots,v_{t}]$, we let $P_{1} \oplus P_{2}$ be the concatenation of the two paths. As we will also consider digraphs, we will use the notation $(u,v)$ is a directed edge from $u$ to $v$, and we extend the above notation for paths to directed paths if directions are used. If $P = [v_1, ..., v_k]$ is a directed path we denote $\rP = [v_k, ..., v_1]$ to be the path where all edges of $P$ are reoriented. For undirected graphs, we use the notation $\{u,v\}$ to denote an edge $uv$.

Rather than prove Theorem \ref{satz:mySndtc}, we will show the following:
\begin{satz}	\label{satz:hilfsbeh}
Let $G$ be a graph, $d, k \in \mathbb N_0$ and suppose that $\fracArb(G) \leq k + \density$ where
\[\density = 
	\begin{cases}
		\frac{d}{d+k+1} \text{, if } d \leq k + 1, \\
		\frac{d + k}{d+3k+1} \text{, if } k + 1 < d < 3(k + 1),
	\end{cases}
\] \\
then there is a decomposition of $G$ into $k+1$ forests where in one forest each connected component has at most $d$ edges.
\end{satz}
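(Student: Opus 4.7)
The plan is to argue by contradiction and to choose $G$ to be a vertex-minimal counterexample to Theorem~\ref{satz:hilfsbeh}, following the template established in \cite{ndtt,ndttPsfs,sndtcPsfs,Yangmatching}. The point of working with Theorem~\ref{satz:hilfsbeh} rather than Theorem~\ref{satz:mySndtc} is that the threshold $\chi(d,k)$ is exactly the quantity that makes the density-counting step at the end balance, so the proof should split into the two regimes $d \leq k+1$ and $k+1 < d < 3(k+1)$ precisely at the place where the definition of $\chi(d,k)$ switches.

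First I would show that a minimum counterexample admits a decomposition $\mathcal{T} = (T_1,\ldots,T_k,F)$ into $k$ spanning trees of $G$ together with one further forest $F$. Since $\chi(d,k) < 1$, Nash-Williams' Theorem gives a decomposition into $k+1$ forests; vertex-minimality of $G$ together with a standard edge-augmentation/contraction argument then lets one upgrade $k$ of those forests to spanning trees. Among all such decompositions I would fix one minimizing a potential function $\sigma$ that quantifies how far $F$ is from having every component of size at most $d$---for example the lexicographically ordered tuple of the excesses $\max(e(K)-d,0)$ over the components $K$ of $F$, broken by a secondary tiebreaker such as the multiset of all component sizes or the ``neighborhood complexity'' of the large components. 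If $\sigma(\mathcal{T})=0$ we are done, so I may fix a \emph{bad} component $K$ of $F$ with $e(K) > d$.

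The heart of the argument is then a reconfiguration procedure. For each vertex of $K$ and each tree $T_i$, a suitable $F$-edge closes a fundamental cycle in $T_i$ and yields a directed \emph{special path} in the sense of Jiang--Yang, along which one can try to swap an $F$-edge of $K$ for a tree edge leaving $K$. The aim is to exhibit some swap that strictly reduces $\sigma$; if no such swap exists, the obstruction propagates and forces the components of $F$ reachable from $K$ along special paths to be of tightly controlled sizes and to attach to $K$ in a restricted way. This is precisely the structural dictionary that Sections~3--6 should build up, and the promised ``deeper look at how tree edge exchanges behave'' is what should sharpen the bound on the number and size of small components clustered next to $K$ compared to the Nine Dragon Tree papers. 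With such a description in hand I would then close the argument by forming the subgraph $H$ induced on $V(K)$ together with the vertex sets of all components of $F$ locked into the closure of $K$ by the structure, count the $k(v(H)-1)$ edges contributed by the spanning trees plus the $F$-edges guaranteed by the structural lemmas, and derive $e(H)/(v(H)-1) > k + \chi(d,k)$, contradicting $\fracArb(G) \leq k + \chi(d,k)$.

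The hardest step will be the structural analysis in the middle. Because the threshold $\chi(d,k)$ is tight, the final counting leaves essentially no slack: every blocked special-path exchange must be converted into a concrete edge contribution to $e(H)$ or into a genuine restriction on $v(H)$, with the correct constant. In the regime $d > k+1$ the bad component $K$ can exceed $d$ by an unbounded amount, which makes the bookkeeping on neighboring components markedly subtler than in the $d \leq k+1$ regime; I expect this is where the careful case distinctions on how $K$ meets each $T_i$ along its special paths (hinted at by the many Tikz figures in the preamble, which appear to lay out Cases~1--3 of an exchange analysis) will have to carry the entire weight of the argument.
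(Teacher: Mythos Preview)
Your outline matches the paper's architecture closely: vertex-minimal counterexample, decomposition into $k$ spanning trees plus a forest, minimize a residue potential, run special-path exchanges \`a la Jiang--Yang, extract structure when exchanges are blocked, and finish with a density count. Two refinements you should make so the plan actually lines up with what is needed. First, the secondary tiebreaker is not a multiset of component sizes or a vague ``neighborhood complexity'' but a \emph{legal order}: an ordering $(R_1,\ldots,R_t)$ of the red components of the exploration subgraph with $R_1=R^*$ and each later $R_j$ reachable by a blue edge from an earlier one, compared lexicographically by $(e(R_1),\ldots,e(R_t))$; minimizing this over all decompositions in $\mathcal F^*$ is precisely what the structural lemmas (your Sections 5--6) contradict when a non-small component has too many small children. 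Second, the subgraph $H$ in the final step is not built locally around a single bad component but is the full exploration subgraph $H_{\mathcal T^*}$ (everything reachable from a fixed root $r\in R^*$ via red edges and outgoing blue edges); the contradiction is obtained by partitioning its red components into blocks $K_{\mathcal C}$ consisting of a non-small component together with its assigned small children, showing each block has $e(K_{\mathcal C})/v(K_{\mathcal C})\geq\chi(d,k)$, and then averaging, rather than by a single local count around $K$.
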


Our first point of order is to show this theorem implies Theorem \ref{satz:mySndtc}.

\begin{lemma}
Theorem~\ref{satz:hilfsbeh} implies Theorem~\ref{satz:mySndtc}.
\end{lemma}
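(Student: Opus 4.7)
The plan is to deduce Theorem~\ref{satz:mySndtc} from Theorem~\ref{satz:hilfsbeh} by splitting into the two cases $d \leq k+1$ and $k+1 < d \leq 2(k+1)$.

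In the first range the statement is essentially a tautology: for $d \leq k+1$, the function $\density$ equals $\frac{d}{d+k+1}$ by definition, so the hypothesis and conclusion of Theorem~\ref{satz:hilfsbeh} coincide exactly with the Strong Nine Dragon Tree Conjecture that Theorem~\ref{satz:mySndtc} asserts in this range; nothing further needs to be said.

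The interesting case is $k+1 < d \leq 2(k+1)$. Here the idea is to apply Theorem~\ref{satz:hilfsbeh} not with $d$, but with a carefully chosen larger integer $d'$. Three conditions must be met: (i) $d' < 3(k+1)$, so that the second branch of $\density$ applies and $\chi(d',k) = \frac{d'+k}{d'+3k+1}$; (ii) $\frac{d'+k}{d'+3k+1} \geq \frac{d}{d+k+1}$, so that the fractional-arboricity hypothesis of Theorem~\ref{satz:mySndtc} is at least as strong as that of Theorem~\ref{satz:hilfsbeh} with parameter $d'$; and (iii) $d' = d + \ceil{k \cdot \frac{d}{k+1}} - k$, so that the resulting component-size bound matches what Theorem~\ref{satz:mySndtc} claims. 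Condition~(iii) forces the choice $d' := d + \ceil{\frac{dk}{k+1}} - k$, and then only (i) and (ii) remain to verify.

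The rest is routine algebra, and is where any ``hard'' step would sit, though there is no real difficulty. For (ii), cross-multiplying and simplifying reduces $\frac{d'+k}{d'+3k+1} \geq \frac{d}{d+k+1}$ to $d' \geq d + \frac{dk}{k+1} - k$, which is immediate from the definition of $d'$ since the ceiling is at least its argument. For (i), $d > k+1$ gives $\frac{dk}{k+1} > k$, hence $\ceil{\frac{dk}{k+1}} \geq k+1$ and $d' > k+1$; and $d \leq 2(k+1)$ gives $\frac{dk}{k+1} \leq 2k$, so $\ceil{\frac{dk}{k+1}} \leq 2k$ (as $2k$ is already an integer), yielding $d' \leq d + k \leq 3k+2 < 3(k+1)$. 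With (i) and (ii) in hand, Theorem~\ref{satz:hilfsbeh} applied with parameter $d'$ produces a decomposition into $k+1$ forests with one forest having every component of size at most $d' = d + \ceil{k \cdot \frac{d}{k+1}} - k$, which is precisely the conclusion of Theorem~\ref{satz:mySndtc} in this range.
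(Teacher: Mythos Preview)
Your proof is correct and follows essentially the same approach as the paper: both set $d' = d + \ceil{k \cdot \frac{d}{k+1}} - k$, verify that $k+1 < d' < 3(k+1)$, and check that $\chi(d',k) \geq \frac{d}{d+k+1}$ so that Theorem~\ref{satz:hilfsbeh} applies with parameter $d'$. You spell out the algebraic verifications in slightly more detail than the paper, but the argument is the same.
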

\begin{beweis}
Let $k+1 < d \leq 2(k+1)$. It is easy to compute that
$k + 1 < d + \ceil{k \cdot \frac{d}{k+1}} - k < 3(k+1)$. Theorem~\ref{satz:mySndtc} now follows directly, since
\[
    \frac{d + \ceil{k \cdot \frac{d}{k+1}} - k + k}{d + \ceil{k \cdot \frac{d}{k+1}} - k + 3k + 1}
    \geq \frac{\frac{2k+1}{k+1}d}{\frac{2k+1}{k+1}d + 2k + 1}
    = \frac{d}{d+k+1}.
\]

\end{beweis}

The next observation gives simple bounds on $\chi(d,k)$, we omit the proof as it is simply rearranging equations.
\begin{beob}	\label{beob:densityInterval}
Let $k,d \in \mathbb{N}$. Then
\begin{enumerate}[(a)]	
	\item $\density \leq \frac{d}{d+k+1}$.
	\item If $d \leq k + 1$, then $\density \in [0, \frac{1}{2}]$.
	\item If $k + 1 < d < 3(k + 1)$, then $\density \in (\frac{1}{2}, \frac{2}{3}]$.
\end{enumerate}
\end{beob}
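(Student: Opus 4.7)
The plan is to verify each of the three parts by direct manipulation of the piecewise definition of $\density$, splitting into the two regimes $d \leq k+1$ and $k+1 < d < 3(k+1)$. The only thing to watch is consistency at the transition $d = k+1$: both branches of the definition evaluate to $\frac{1}{2}$ there, which will serve as a sanity check for parts (b) and (c).

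For part (a), when $d \leq k+1$ equality holds by definition. In the second regime I would cross-multiply (both denominators are positive) to reduce $\frac{d+k}{d+3k+1} \leq \frac{d}{d+k+1}$ to $(d+k)(d+k+1) \leq d(d+3k+1)$. Expanding and cancelling leaves $k(k+1-d) \leq 0$, which holds because $d \geq k+1$ in this regime (strictly, if $k \geq 1$; the $k=0$ case is immediate). For parts (b) and (c), the same cross-multiplication approach works: $\frac{d}{d+k+1} \leq \frac{1}{2}$ rearranges to $d \leq k+1$ and the lower bound $0$ is obvious, while in the second regime $\frac{d+k}{d+3k+1} > \frac{1}{2}$ rearranges to $d > k+1$ and $\frac{d+k}{d+3k+1} \leq \frac{2}{3}$ rearranges to $d \leq 3k+2$, which follows from $d < 3(k+1) = 3k+3$ combined with $d \in \mathbb{N}$.

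There is no real obstacle here — every step is a single linear or mildly quadratic manipulation after clearing denominators — so the whole argument fits into a handful of lines of arithmetic, matching the author's remark that the proof is simply rearranging equations.
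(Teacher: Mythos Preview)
Your proposal is correct and matches the paper's approach exactly: the paper omits the proof entirely, remarking that ``it is simply rearranging equations,'' and your cross-multiplication argument carries this out cleanly. The only detail worth noting is that you correctly invoke integrality of $d$ in part~(c) to pass from $d < 3(k+1)$ to $d \leq 3k+2$, which is necessary for the upper bound $\frac{2}{3}$ to be attained rather than merely approached.
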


For the rest of the paper we fix integers $k,d \in \mathbb{N}$, and always assume that we have a graph $G$ which is a counterexample with minimum number of vertices to Theorem \ref{satz:hilfsbeh}. The first observation we need is that $G$ decomposes into $k$ spanning trees and another forest. This fact follows from a minor tweak to the proof to Lemma~2.1 of \cite{ndtt} so we omit the proof:

\begin{lemma}[\cite{ndtt}] \label{lemma:minimalCounterExample}
Every graph $G$ that is a vertex minimal counterexample to Theorem \ref{satz:hilfsbeh} admits a decomposition into forests $T_1, \dots, T_k, F$ such that $T_1, \dots, T_k$ are spanning trees.
\end{lemma}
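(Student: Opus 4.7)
The plan is to combine Nash-Williams' arboricity theorem with the Nash-Williams--Tutte tree-packing theorem, and then refine the choice of spanning trees by a matroid-exchange argument, using vertex-minimality of $G$ to rule out the obstruction case of tree packing. Since $\fracArb(G) \leq k + \density < k+1$, Nash-Williams' arboricity theorem first gives a decomposition of $G$ into $k+1$ forests. I may assume $G$ is connected: otherwise each component has strictly fewer vertices than $G$ and thus, by vertex-minimality of $G$, admits a decomposition as in Theorem~\ref{satz:hilfsbeh}; gluing these componentwise decompositions yields one for $G$, contradicting that $G$ is a counterexample.

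The crux is to show that $G$ contains $k$ edge-disjoint spanning trees. Suppose not; by Nash-Williams--Tutte there exists a partition $V(G) = V_1 \sqcup \cdots \sqcup V_p$ with $p \geq 2$ whose crossing edge set $E_P$ satisfies $|E_P| < k(p-1)$. Each induced subgraph $G[V_i]$ has $\fracArb(G[V_i]) \leq \fracArb(G) \leq k + \density$ and strictly fewer vertices than $G$, so by vertex-minimality each $G[V_i]$ admits a decomposition $T^i_1, \ldots, T^i_k, F^i$ as required by Theorem~\ref{satz:hilfsbeh}. I would then set $F = \bigcup_i F^i$ (the components of $F$ still have at most $d$ edges, since no crossing edge is placed in $F$), initialize $T_j = \bigcup_i T^i_j$, and distribute the crossing edges greedily among $T_1, \ldots, T_k$ so that none of them creates a cycle. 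Viewed in the contracted graph $G/P$, the $k$ forests $T_j$ have a total capacity of $k(p-1)$ crossing edges; since $|E_P| < k(p-1)$, a matroid-union argument (combined with an appropriate minimality choice of $P$) guarantees that every crossing edge is absorbed into some $T_j$. The resulting decomposition of $G$ satisfies Theorem~\ref{satz:hilfsbeh}, contradicting that $G$ is a counterexample.

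Finally, given $k$ edge-disjoint spanning trees $T_1, \ldots, T_k$ in $G$, the edge count $|E(G)| \leq (k+1)(v(G)-1)$ forces the complement $E(G) \setminus (T_1 \cup \cdots \cup T_k)$ to have at most $v(G)-1$ edges, but it may still contain a cycle. A standard matroid-exchange argument settles this: if $C$ is such a cycle and $e \in C$, then adding $e$ to some $T_i$ creates a unique cycle sharing an edge $e'$ whose swap with $e$ keeps $T_i$ spanning while strictly reducing the number of cycles in the complement, and iterating turns the complement into a forest. The principal obstacle is the middle step: verifying that, after contraction, the crossing edges of a violating partition $P$ fall within the matroid-union capacity of $k$ forests on $p$ vertices. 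This hinges on selecting $P$ so that the strict inequality $|E_P| < k(p-1)$ propagates to all contracted sub-partitions, which is exactly where the analogous argument in \cite{ndtt} does its work and where our "minor tweak" simply substitutes the component-size condition on $F$ for the maximum-degree condition.
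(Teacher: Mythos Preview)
The paper does not actually prove this lemma; it cites Lemma~2.1 of \cite{ndtt} and says the argument carries over with a minor tweak (replacing the degree condition on $F$ by the component-size condition). Your outline---Nash--Williams--Tutte applied to a vertex-minimal counterexample to produce $k$ edge-disjoint spanning trees, then upgrading to the desired decomposition---is exactly that approach. Your second paragraph goes through once you specify the minimality: take $P$ with the fewest parts among all partitions witnessing failure of tree-packing; then for any $q\ge 2$ of the blocks, merging them yields a coarser partition with $p-q+1<p$ parts, which by minimality carries at least $k(p-q)$ crossing edges, so at most $|E_P|-k(p-q)<k(q-1)$ crossing edges lie among those $q$ blocks. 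That is precisely the Nash--Williams bound needed to decompose $E_P$ into $k$ forests of $G/P$, after which your gluing contradicts $G$ being a counterexample.

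There is, however, a genuine gap in your third paragraph. The assertion that swapping an edge $e\in C$ with some $e'$ on its fundamental cycle in $T_i$ strictly reduces the number of cycles in the complement is not justified, and it fails for a generic choice: since $e$ lies on a cycle of $F$, the component structure of $F-e$ is that of $F$, and nothing prevents both endpoints of $e'$ from lying in the same $F$-component, in which case $F'=F-e+e'$ has the same cyclomatic number as $F$. The step is true but needs a global extremal argument rather than a local swap. One clean route is matroid union: $E(G)$ is independent in the $(k{+}1)$-fold union $M^{\vee(k+1)}$ of the cycle matroid $M$ (arboricity at most $k{+}1$), and $M^{\vee k}$ has rank $k(v(G)-1)$ because $k$ disjoint spanning trees exist. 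Among all partitions $E(G)=A\sqcup B$ with $A$ independent in $M^{\vee k}$ and $B$ independent in $M$, take one with $|A|$ maximum; if $A$ were not a base of $M^{\vee k}$, some $x\in E(G)\setminus A=B$ would have $A+x$ independent in $M^{\vee k}$, contradicting maximality. Thus $A$ decomposes into $k$ spanning trees and $B$ is the desired forest.
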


Note, if $G$ decomposes into $k$ spanning trees $T_{1},\ldots,T_{k}$, and a forest $F$, it follows that $F$ is disconnected. Otherwise, $\fracArb(G) = k+1$, but $\fracArb(G) < k+1$, a contradiction.

Given a decomposition of $G$, we will want to measure how close it is to satisfying Theorem \ref{satz:hilfsbeh}. This is captured in the next definition:

\begin{definition}
The \textit{residue function} $\rho(F)$ of a forest $F$ is defined as the tuple $(\rho_{V(G)-1}(F), \\  \rho_{V(G)-2}(F), \ldots, \rho_{d+1}(F))$, where $\rho_i(F)$ is the number of components of $F$ having $i$ edges.
\end{definition}

We will want to compare residue function values of different forests using lexicographic ordering and want to find the decomposition with one forest minimizing the residue function.

\begin{notation}
Over all decompositions into $k$ spanning trees, $T_{1},\ldots,T_{k}$, and a forest $F$ we choose one where $F$ minimizes $\rho$ with respect to lexicographic order. We call this minimum tuple $\rho^*$. This forest $F$ has a component $R^*$ containing more than $d$ edges.
We choose a vertex $r \in V(R^*)$ of degree at least $2$ in $R^*$. Further, if $d > k+1$ (and thus $e(R^*) \geq 4$), then we choose $r$ such that it has degree $3$ in $R^*$ or if this is not possible, such that there are two edge disjoint paths in $R^*$  of length at least $2$ starting at $r$. We fix $R^*$ and $r$ for the rest of the paper.
\end{notation}

\begin{definition}
We define $\mathcal F$ to be the set of decompositions into forests $(T_1, \dots, T_k, F)$ of $G$ such that $T_1, \dots, T_k$ are spanning trees of $G$; $R^*$ is a connected component of the undirected forest $F$ and the edges of $T_1, \ldots T_k$ are directed towards $r$. We let $\mathcal F^* \subseteq \mathcal F$ be the set of decompositions $(T_{1},\ldots,T_{k},F) \in \mathcal F$ such that $\rho(F) = \rho^*$.
\end{definition}

The next definition is simply to make it easier to talk about decompositions in $\mathcal F$.

\begin{definition}
Let $\mathcal{T} = (T_1, \dots, T_k, F) \in \mathcal F$. We say that the (directed) edges of $T_1, \dots, T_k$ are \textit{blue edges} and the (undirected) edges of $F$ are \textit{red edges}. We define $E(\mathcal{T}) := E(T_1) \cup \dots \cup E(T_k) \cup E(F)$. For a subgraph $U \subseteq (V, \; E(\mathcal{T}))$ we write $E_b(U)$ and $E_r(U)$ for the set of blue and red edges of $U$, respectively. Furthermore, we write $e_b(U) = |E_b(U)|$ and $e_r(U) = |E_r(U)|$.
\end{definition}

Finally, we can define the critical subgraph which we will focus on for the rest of the paper:

\begin{definition} \label{def:explSubgraph}
Let $\mathcal{T} \in \mathcal F$.
The \textit{exploration subgraph} $\explSG$ of $\mathcal{T}$ is the subgraph of $(V, \; E(\mathcal{T}))$, where the vertex set $V(\explSG)$ consists of all vertices $v$ for which there is a sequence of vertices $r=x_1, \dots, x_l = v$ such that for all $1 \leq i < l$ it holds: $(x_i, x_{i+1}) \in E_b(\mathcal{T})$ or 
$\{x_i, x_{i+1}\} \in E_r(\mathcal{T})$, and the set of edges of $\explSG$ is defined as
\[E(\explSG) = \big\{ \{x, y\} \in E_r(\mathcal T) \: | \: x, y \in V(\explSG) \big\} \cup \big\{ (x, y) \in E_b(\mathcal{T}) \: | \: x, y \in V(\explSG) \big\}.\]
We also call a connected component of $(V, E_r(\mathcal T))$ a red component.
\end{definition}

The next observation shows the importance of the exploration subgraph:
\begin{beob}	\label{beob:densityExplSG}
Let $\mathcal{T} \in \mathcal F$. Then
\[\frac{e_r(\explSG)}{v(\explSG) - 1} \leq \density.\]
\end{beob}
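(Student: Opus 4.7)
The plan is to bound $e_r(\explSG)$ from above by bounding $e_b(\explSG)$ from below and then invoking the fractional arboricity hypothesis on the subgraph $\explSG \subseteq G$.

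First I would show that for every $i \in \{1,\ldots,k\}$, the tree $T_i$ restricted to $V(\explSG)$ contains exactly $v(\explSG) - 1$ blue edges. The upper bound is immediate: since $T_i$ is a spanning tree oriented toward $r$, each vertex of $V(\explSG) \setminus \{r\}$ has out-degree one in $T_i$ and $r$ has out-degree zero, so $T_i[V(\explSG)]$ has at most $v(\explSG) - 1$ edges. For the matching lower bound I would argue that if $v \in V(\explSG) \setminus \{r\}$, then the unique blue edge $(v, p_i(v))$ leaving $v$ in $T_i$ satisfies $p_i(v) \in V(\explSG)$: concatenating a witnessing sequence from $r$ to $v$ with the step $(v, p_i(v))$ produces a witnessing sequence for $p_i(v)$. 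Thus every non-root vertex of $V(\explSG)$ contributes its unique parent edge in $T_i$ to $\explSG$, giving $v(\explSG)-1$ blue edges per tree. Summing over $i$ yields $e_b(\explSG) = k(v(\explSG) - 1)$.

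Second, since $\explSG$ is a subgraph of $G$ (and $v(\explSG) \geq 2$ because $r$ has a red neighbour in $R^*$, so we can apply fractional arboricity), we have
\[
\frac{e(\explSG)}{v(\explSG) - 1} \;\leq\; \fracArb(G) \;\leq\; k + \density.
\]
Splitting $e(\explSG) = e_b(\explSG) + e_r(\explSG)$ and substituting $e_b(\explSG) = k(v(\explSG) - 1)$ gives
\[
k + \frac{e_r(\explSG)}{v(\explSG) - 1} \;\leq\; k + \density,
\]
which is exactly the desired inequality.

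I do not expect any real obstacle here: the only subtlety is verifying that the exploration rule forces $\explSG$ to be closed under "follow the blue parent," which is the key structural fact making the blue count equal to $k(v(\explSG)-1)$ rather than merely at most that. Once this is observed, the result is a one-line consequence of the fractional arboricity hypothesis applied to $\explSG$.
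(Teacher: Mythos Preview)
Your proposal is correct and follows exactly the same approach as the paper: establish $e_b(\explSG)=k(v(\explSG)-1)$ by counting the unique blue outgoing edge per tree at each non-root vertex, then apply the fractional arboricity bound to $\explSG$ and subtract. If anything, you are more careful than the paper, which simply asserts the blue-edge count without explicitly verifying the closure property (that the blue parent of a vertex in $V(\explSG)$ again lies in $V(\explSG)$) or the condition $v(\explSG)\geq 2$.
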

\begin{beweis}
We have $e_b(\explSG) = k (v(\explSG) - 1)$, since each vertex $v \in V(\explSG) - r$ has exactly $k$ blue outgoing edges and $r$ has no blue outgoing edge in $H_{\mathcal{T}}$. We conclude:
\begin{align*}
k + \density & \geq \fracArb(G) \\
    &\geq \frac{e(\explSG)}{v(\explSG) - 1} \\
	&= \frac{e_b(\explSG)}{v(\explSG) - 1} + \frac{e_r(\explSG)}{v(\explSG) - 1} \\
	&= k + \frac{e_r(\explSG)}{v(\explSG) - 1.}
\end{align*}
\end{beweis}

In light of Observation \ref{beob:densityExplSG}, we will want to focus on the subgraphs of $F$ with low edge density:

\begin{definition}
A red component $K$ is \textit{small} if $e(K) = 0$ and $d \leq k + 1$, or if $e(K) \leq 1$ and $k + 1 < d < 3(k+1)$.
\end{definition}

Note that $K$ is small, if and only if $\frac{e(K)}{v(K)} < \density$.

\begin{beob} \label{beob:sizeOfSmall}
Let $K_1, K_2$ be small red components. Then $e(K_1), e(K_2) < \frac{d}{k+1}$ and $e(K_1) + e(K_2) \leq d - 1.$
\end{beob}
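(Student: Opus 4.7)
The statement is essentially a direct unfolding of the definition of \emph{small}, so my plan is to do a two-case analysis according to which branch of that definition applies. Since the two alternatives in the definition are pinned to disjoint ranges of $d$ (namely $d \leq k+1$ versus $k+1 < d < 3(k+1)$), one of the two cases always applies, and in each case both inequalities drop out from elementary arithmetic using the standing convention $k,d \in \mathbb{N}$ (so in particular $d \geq 1$, and $k \geq 1$).

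First I would handle the case $d \leq k+1$. Here small forces $e(K_1) = e(K_2) = 0$, so $e(K_i) = 0 < \frac{d}{k+1}$ (using $d \geq 1$), and $e(K_1) + e(K_2) = 0 \leq d-1$. Both conclusions are immediate.

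Next I would treat the case $k+1 < d < 3(k+1)$. Now small means $e(K_i) \leq 1$. The first inequality follows since $\frac{d}{k+1} > 1 \geq e(K_i)$, because $d > k+1$ is precisely what we have assumed. For the second inequality, I need $e(K_1) + e(K_2) \leq 2 \leq d - 1$, i.e.\ $d \geq 3$; this is where a small sanity check is needed. Since $k \geq 1$ and $d$ is an integer with $d > k+1$, we get $d \geq k + 2 \geq 3$, which closes the case.

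The main (very mild) obstacle is just to be sure that the assumption $k,d \in \mathbb{N}$ fixed in Section~2 gives $k \geq 1$, which is what forces $d \geq 3$ in the upper range and thereby prevents a degenerate edge case. No further machinery is required; in particular the observation is purely combinatorial on red components and does not interact with the minimum-counterexample setup or with the exploration subgraph.
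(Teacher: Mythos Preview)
Your proof is correct. It differs slightly from the paper's intended argument (which is commented out in the source but visible there): rather than a case split on the two branches of the definition of \emph{small}, the paper uses the equivalent density characterisation $e(K_i)/v(K_i) < \chi(d,k) \leq \frac{d}{d+k+1}$, clears denominators to obtain $e(K_i) < \frac{d}{k+1}$ uniformly, and then sums and uses $k \geq 1$ to get $e(K_1)+e(K_2) < \frac{2d}{k+1} \leq d$. Your direct case analysis is arguably cleaner given the explicit definition actually stated in the paper, while the paper's version has the minor advantage of not needing to verify the integrality step $d \geq 3$ in the second regime separately. Either way the content is trivial and both arguments rely on $k \geq 1$, which you correctly flag.
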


Now we turn our focus to the notion of legal orders, which is an ordering of components of $F$ that loosely tells us in what order we should augment the decomposition.

\begin{definition} \label{def:legalOrder}
Let $(T_{1},\ldots T_{k},F) = \mathcal{T} \in \mathcal F$. Let $\sigma = (R_{1},\ldots,R_{t})$ be a sequence of all red components in $\explSG$. We say $\sigma$ is a \textit{legal order} for $\mathcal T$ if $R_1 = R^*$, and further for each $1 < j \leq t$, there is an $i_j < j$ such that there is a blue directed edge $(x_j, y_j)$ with $x_j \in V(R_{i_j})$ and $y_j \in V(R_j)$. 
\end{definition}

It will be useful to compare legal orders, and we will again do so using the lexicographic ordering.

\begin{definition} 
Let $\mathcal T, \mathcal T' \in \mathcal F$. Suppose $\sigma = (R_1, \dots, R_t)$ and $\sigma' = (R'_1, \dots, R'_{t'})$ are legal orders for $\mathcal T$ and $\mathcal T'$, respectively. We say $\sigma$ is \textit{smaller than} $\sigma'$, denoted $\sigma < \sigma'$, if $(e(R_1), \dots, e(R_t))$ is lexicographically smaller than $(e(R'_1), \dots, e(R'_{t'}))$. If $t \neq t'$ we extend the shorter sequence with zeros in order to make the orders comparable.
\end{definition}

To make it easier to discuss legal orders, we introduce some more vocabulary:

\begin{definition}
Suppose $\sigma = (R_1, \dots, R_t)$ is a legal order for $\mathcal{T} \in \mathcal F$. We write $i_\sigma(v) := j$ for $v \in V(\explSG)$, if $v \in V(R_j)$.
We define $i_\sigma(H) := \min \{i_\sigma(v) | v \in V(H)\}$ for each subgraph $H \subseteq \explSG$. We also define $\sigGreater{v} := \{w \in V(\explSG) | i_\sigma(w) > i_\sigma(v)\}$. We say that  $R_i$ is a \textit{parent} of $R_j$ with respect to $\sigma$, if $i < j$ holds and if there is a blue edge $(x, y)$ with $x \in V(R_i), y \in V(R_j)$. In this case we also call $R_j$ a child of $R_i$ with respect to $\sigma$.
\end{definition}

Note that in the above definition, a component may have many parents, and further aside from $R^*$, all red components have a parent. For the purposes of tiebreaking how we pick legal orders, we introduce the next graph:

\begin{definition}
Let $(T_{1},\ldots,T_{k},F) = \mathcal{T} \in \mathcal F$ and let $\sigma = (R_{1},\ldots,R_{t})$ be a legal order for $\mathcal T$. 
Compliant to Definition \ref{def:legalOrder} we choose a blue edge $(x_j, y_j)$ for all $1 < j \leq t$. There might be multiple possibilities for this, but we simply fix one choice for $\sigma$. We then denote $T_\sigma := (V(\explSG), \: E_r(\explSG) \cup \{(x_j, y_j) \: | \: 1 < j \leq t\})$ which defines a tree that we call the \textit{auxiliary tree} of $\sigma$. We always consider $T_\sigma$ to be rooted at $r$.
\end{definition}

With this, we are in position to define our counterexample. As already outlined, $G$ is a vertex-minimal counterexample to the theorem. Further, we pick a legal order $\sigma^*$ for a decomposition $\mathcal T^* = (T^*_1, ..., T^*_{k}, F^*) \in \mathcal F^*$ such that there is no legal order $\sigma$ with $\sigma < \sigma^*$ for any $\mathcal T' \in \mathcal F^*$. We will use these notations for the minimal legal order and decomposition throughout the rest of the paper.

\section{Augmenting Special Paths}
In this section we consider the first method to find a smaller legal order or shrink a component with more than $d$ edges without creating any new components with more than $d$ edges.

This method from \cite{ndtt} roughly works the following way:
if a blue edge $e$ connecting two red components can be colored red without increasing the residue function, then in certain cases we can find a red edge $e'$ that can be colored blue in exchange. In order to find this edge we need to look for a certain blue directed path that ends at $e$ and starts at $e'$ and $e'$ has to be closer to $R^*$ with respect to the legal order than $e$ (here we are viewing $e$ as a subgraph with two endpoints to make sense of the term close). First, we formalize the requirements for such a blue path:

\begin{definition} 
Let $\sigma = (R_1, \dots, R_t)$ be a legal order for $\mathcal{T} \in \mathcal F$. We call a blue directed path $P=[v_0, v_1, \dots, v_{l}] \subseteq (V(\explSG), E_b(\explSG))$ \textit{special with respect to $\sigma$ and $(v_{l-1}, v_l)$} if $v_l \in \sigGreater{v_0}$. \\
For two special paths $P=[v_0, v_1, \dots, v_{l}]$ and $P'=[v'_0, v'_1, \dots, v'_{l'}]$ with respect to $\sigma$ and $(v_{l-1}, v_l)$ we write $P \leq P'$ if $i_\sigma(v_0) < i_\sigma(v'_0)$ or if $i_\sigma(v_0) = i_\sigma(v'_0)$ and $v_0$ in $T_\sigma$ is an ancestor  (with respect to the root $r$) of $v'_0$. We call a special path $P$ with respect to $\sigma$ and $(x, y)$ \textit{minimal (with respect to $\sigma$ and $(x, y)$)} if there is no special path $P' \neq P$ with respect to $\sigma$ and $(x, y)$ with $P' \leq P$.
\end{definition}

Note that if we have a minimal special path $P=[v_0, v_1, \dots, v_l]$ with respect to $\sigma$ and $(v_{l-1}, v_l)$ we have $v_0 \neq r$ because $r$ has no outgoing blue edge by construction. Therefore, $v_0$ has a parent vertex in $T_\sigma$, which we denote by $v_{-1}$. Note that the edge $\{v_{-1}, v_0\}$ is red because of the minimality of $P$, and since all blue edges in $T_\sigma$ are directed away from $r$ in the auxiliary tree.

The following lemma describes which modifications to the decomposition can be made if a minimal special path exists and how they change the legal order.

\begin{lemma}[Lemma 2.4 from \cite{ndtt}]	\label{lemma:specialPaths}
Let $\sigma = (R_1, \dots, R_t)$ be a legal order for $(T_1, \dots, T_k, F) \in \mathcal F$. \\
Furthermore, let $P = [v_0, v_1, \dots, v_l]$ be a minimal special path with respect to $\sigma$ and $(v_{l-1},v_{l})$ and let $i_0 := i_\sigma(v_0)$. Suppose that $v_{l-1}$ is not in the component of $R_{i_0} - v_0$ that contains $v_{-1}$. \\
Then there is a partition into forests $\mathcal{T}' = (T'_1, \dots, T'_k, F')$ of $G$ such that $T'_1, \dots, T'_k$ are spanning trees rooted at $r$ whose edges are directed to the respective parent vertex, the forest $F'$ exclusively consists of undirected edges and if $R'$ is the component containing $r$ in $F'$, we have that:
\begin{enumerate}
	\item $F' = \big( F + \{v_{l-1}, v_l\} \big) - \{v_{-1}, v_0\}$.
	\item $(v_0, v_{-1}) \in \bigcup_{j=1}^k E(T'_j)$.
	\item $\begin{aligned}[t]\bigcup_{j=1}^k \big\{\{u, v\} \: | \: (u, v) \in E(T'_j)\big\} = \bigcup_{j=1}^k \big\{\{u, v\} \: | \: (u, v) \in E(T_j)\big\} - \{v_{l-1}, v_l\} + \{v_0, v_{-1}\}.\end{aligned}$
	\item $\big\{(x, y) \in E(T'_j) \: | \: i_\sigma(x) < i_0\big\} = \big\{(x, y) \in E(T_j) \: | \: i_\sigma(x) < i_0\big\}$ for all $j \in \{1, \dots, k\}$.
	\item If $i_0 > 1$, then $R^* = R'$, $\mathcal{T}' \in \mathcal F$ and there exists a legal order $\sigma' = (R'_1, \dots, R'_{t'})$ for $\mathcal{T}'$ with $R'_j = R_j$ for all $j < i_0$ and $e(R'_{i_0}) < e(R_{i_0})$. Thus $\sigma' < \sigma$.
\end{enumerate}
\end{lemma}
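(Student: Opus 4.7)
The plan is to construct $\mathcal{T}'$ explicitly by a ``path-rotation'' along $P$ and then verify the five properties one by one. To define $\mathcal{T}'$, decompose $P$ into its maximal monochromatic sub-paths $P^{(1)}, \ldots, P^{(s)}$, where $P^{(h)} \subseteq T_{b_h}$ runs from $v_{a_{h-1}}$ to $v_{a_h}$ with $0 = a_0 < a_1 < \cdots < a_s = l$. I would obtain $\mathcal{T}'$ by the following simultaneous moves: set $F' := (F + \{v_{l-1}, v_l\}) - \{v_{-1}, v_0\}$; for each $h < s$, remove the junction edge $(v_{a_h - 1}, v_{a_h})$ from $T_{b_h}$, reverse all remaining edges of $P^{(h)}$, and insert the reversed junction edge $(v_{a_h}, v_{a_h-1})$ into $T_{b_{h+1}}$; in $T_{b_s}$, delete $(v_{l-1}, v_l)$ entirely and reverse all remaining edges of $P^{(s)}$; finally, insert the new directed blue edge $(v_0, v_{-1})$ into $T_{b_1}$. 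Properties~(1), (2), (3) follow directly from this description, and property~(4) holds because every modified edge is incident to a vertex with $\sigma$-index at least $i_0$.

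The technical core is to verify that each $T'_j$ is a spanning tree rooted at $r$ with all edges directed toward $r$. Trees not appearing among the $T_{b_h}$ are untouched, so it suffices to check each $T_{b_h}$. In $T_{b_1}$, deleting $(v_{a_1-1}, v_{a_1})$ splits the tree into a subtree $A$ containing $r$ and a subtree $B$ containing $v_{a_1-1}$; reversing the remaining edges of $P^{(1)}$ re-roots $B$ at $v_{a_1-1}$; inserting $(v_0, v_{-1})$ then glues $A$ and $B$ into a single spanning tree, provided $v_{-1} \in A$. The minimality of $P$ under $\leq$ yields exactly this: if $v_{-1}$ were a descendant of $v_{a_1-1}$ in $T_{b_1}$, one could prepend the $T_{b_1}$-directed path from $v_{-1}$ to $v_{a_1-1}$ onto $P$ to obtain a special path strictly smaller than $P$, since $v_{-1}$ is the $T_\sigma$-parent of $v_0$. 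An analogous argument at each subsequent $T_{b_h}$ shows that the reversed junction edge newly inserted by step (ii) correctly reconnects the two pieces of $T_{b_h}$ created by removing its own junction edge. Acyclicity of $F'$ follows because removing a red edge never creates a cycle, and the new red edge $\{v_{l-1}, v_l\}$ links red components in different $R_i$'s: the specialness of $P$ forces $v_l \notin R_{i_0}$, and the hypothesis on $v_{l-1}$ rules out the only remaining configuration in which $v_{l-1}$ and $v_l$ could share a component of $F - \{v_{-1}, v_0\}$.

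For property~(5), assume $i_0 > 1$. Since $R_1, \ldots, R_{i_0-1}$ are untouched, the component of $r$ in $F'$ is still $R^*$, and the verification above gives $\mathcal{T}' \in \mathcal F$. To construct a legal order $\sigma' < \sigma$, set $R'_j := R_j$ for $j < i_0$, let $R'_{i_0}$ be the component of $F'$ containing $v_0$, and extend to a full legal order in any valid way (its legality at position $i_0$ can be certified using the same blue attachment that originally witnessed $R_{i_0}$'s legality, rerouted through $P$). Since removing $\{v_{-1}, v_0\}$ strictly splits $R_{i_0}$ into two smaller components, $e(R'_{i_0}) < e(R_{i_0})$, and hence $\sigma' < \sigma$ lexicographically. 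The principal obstacle throughout is the spanning-tree verification in the second paragraph: controlling the cascade of reorientations and junction swaps so that no $T'_j$ develops a cycle or becomes disconnected requires careful use of the minimality of $P$ under $\leq$ together with the separation hypothesis on $v_{l-1}$ and $v_{-1}$; everything else is bookkeeping.
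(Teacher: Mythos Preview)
The paper does not supply its own proof of this lemma; it is cited directly as Lemma~2.4 of Jiang and Yang and used as a black box. Your path-rotation construction---splitting $P$ into maximal monochromatic segments $P^{(1)},\dots,P^{(s)}$ and shifting each junction edge into the next colour while reversing the segments---is exactly their argument, so there is nothing in the present paper to compare against.

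One caveat on your sketch: the phrase ``an analogous argument at each subsequent $T_{b_h}$'' is a bit loose. At $h=1$ you contradict minimality by producing a competing special path that starts at $v_{-1}$, exploiting that $v_{-1}$ is a strict $T_\sigma$-ancestor of $v_0$. For $h\ge 2$ the junction vertex $v_{a_{h-1}-1}$ has no such ancestry relation to $v_0$, so the argument is not literally the same. The correct observation is that minimality forces $P$ to be the \emph{unique} special path with first vertex $v_0$ and last edge $(v_{l-1},v_l)$, since any other such $P'$ trivially satisfies $P'\le P$; a $T_{b_h}$-shortcut from $v_{a_{h-1}-1}$ into $P^{(h)}$ (which exists if $v_{a_{h-1}-1}$ lies below $v_{a_h-1}$ in $T_{b_h}$) would, when spliced into $P$, yield a second such path. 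You should also say a word about the case where some colour repeats among $b_1,\dots,b_s$, so that several junction exchanges hit the same tree; the verification still goes through one junction at a time, but this is worth stating. With these two points made precise your proof is complete and matches the original source.
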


The following lemma shows us how to choose a blue edge for which we can definitely find a (minimal) special path that satisfies the conditions of the previous lemma.

\begin{lemma}[cf.\ Corollary 2.5 in \cite{ndtt}] \label{lemma:goodMinPath}
Let $\sigma = (R_1, \dots, R_t)$ be a legal order for $(T_1, \dots, T_k, F) \in \mathcal F$ and $(x, y)$ a blue edge in this partition with $y \in \sigGreater{x}$. \\
Then there exists a minimal special path $[v_0, v_1, \dots, v_l]$ with respect to $\sigma$ and $(x, y)$ with $i_0 := i_\sigma(v_0)$ such that $x = v_{l-1}$ is not in the component $R_{i_0} - v_0$ that contains $v_{-1}$, and we can apply Lemma~\ref{lemma:specialPaths}.
\end{lemma}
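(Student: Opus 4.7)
The plan is to exhibit a minimal special path satisfying the component condition via an iterative argument that exploits the rooted tree structure of $T_\sigma$ restricted to $R_{i_0}$.

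\emph{Existence and structural setup.} The single-edge path $[x,y]$ is blue, directed, and special with respect to $\sigma$ and $(x,y)$, since $y \in \sigGreater{x}$ by hypothesis. Hence the set of special paths is non-empty, and I pick any minimal one $P = [v_0, v_1, \dots, v_l]$ under the partial order $\leq$. Since $r$ has no outgoing blue edges, $v_0 \neq r$, so $v_0$ has a parent $v_{-1}$ in $T_\sigma$. If the edge $\{v_{-1},v_0\}$ were the blue auxiliary edge of $T_\sigma$ attaching $v_0$ to its parent, it would be directed $(v_{-1},v_0)$, and prepending it to $P$ would yield the blue directed path $[v_{-1},v_0,\dots,v_l]$, which is still special (since $i_\sigma(v_l) > i_\sigma(v_0) \geq i_\sigma(v_{-1})$) and strictly smaller than $P$ under $\leq$. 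This contradicts minimality, so $\{v_{-1},v_0\}$ is a red edge and in particular $v_{-1} \in R_{i_0}$.

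\emph{Iteratively fixing the component condition.} If $v_{l-1}$ is not in the component of $R_{i_0} - v_0$ containing $v_{-1}$, then $P$ itself is the required path. Otherwise $v_{l-1} \in R_{i_0}$, and I construct a different minimal special path satisfying the condition. Consider $P^{(1)} := [v_{l-1}, v_l]$, which is blue, directed, and special of length one. If $P^{(1)}$ is minimal I take it: the ``new $v_{l-1}$'' coincides with the ``new $v_0$'' and so is not a vertex of the punctured red component. Otherwise there is a strictly smaller special path $P^{(2)}$; its starting vertex must lie in $R_{i_0}$ (a strictly earlier component would also be strictly smaller than $P$, contradicting the minimality of $P$) and be a proper $T_\sigma$-ancestor of $v_{l-1}$. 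But then $v_{l-1}$ is a proper $T_\sigma$-descendant of the new starting vertex, so $v_{l-1}$ lies in one of the ``downward'' components of $R_{i_0}$ after removing the new start, whereas the forbidden component (the one containing the parent in $T_\sigma$) is always the ``upward'' one. Hence $P^{(2)}$ satisfies the component condition. If $P^{(2)}$ is minimal we are done; otherwise iterate: each new path starts at a strictly higher $T_\sigma$-ancestor of the previous start, still inside $R_{i_0}$ (any move to an earlier component would again violate the minimality of $P$). Since $T_\sigma$ restricted to $R_{i_0}$ is finite the iteration terminates at a minimal special path satisfying the condition, and Lemma~\ref{lemma:specialPaths} then applies.

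\emph{Main obstacle.} The principal subtlety is that $\leq$ is only a partial order on special paths, so different minimal paths can be incomparable and minimality alone does not force the condition. The iterative argument handles this by moving the starting vertex up in $T_\sigma$: once the starting vertex is an ancestor of $v_{l-1}$ in $T_\sigma$, the forbidden component lies entirely above the start so the condition is automatic. The only thing to check carefully is that the ascent in $T_\sigma$ never escapes $R_{i_0}$, which follows from the minimality of the original $P$ --- any such escape would produce a path strictly smaller than $P$.
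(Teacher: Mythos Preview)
The paper does not give its own proof of this lemma; it is stated with a reference to Corollary~2.5 of~\cite{ndtt}. Your argument is the standard one and is essentially correct: once the initially chosen minimal path $P$ fails the component condition you restart from the length-one special path $[x,y]$ and climb to special paths whose start is a $T_\sigma$-ancestor of $x$ inside $R_{i_0}$; since $x=v_{l-1}$ is then a proper descendant of each successive start, it automatically lies on the opposite side of $v_0$ from $v_{-1}$.

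One point deserves a little more care. Your termination claim that ``each new path starts at a strictly higher $T_\sigma$-ancestor of the previous start'' is clear for the first step (the only special path starting at $x$ is $[x,y]$ itself, so $P^{(2)}$ must start strictly above $x$), but from $P^{(2)}$ onward two distinct special paths may in principle share the same starting vertex, and under the paper's definition such paths are $\leq$-equivalent; your iteration could then bounce between them without progress. The clean way around this is to avoid iterating altogether: the starting vertices of special paths that are $\leq [x,y]$ all lie on the single $T_\sigma$-path from $x$ towards the root of $R_{i_0}$, hence are totally ordered, so you can directly pick one whose start $w^*$ is highest. No special path starts strictly above $w^*$ (vertices above the least common $T_\sigma$-ancestor of $x$ and $v_0$ are proper ancestors of $v_0$ and are excluded by the minimality of $P$; vertices between $w^*$ and that common ancestor are excluded by the choice of $w^*$), so any special path from $w^*$ is minimal in the sense required for Lemma~\ref{lemma:specialPaths}, and the component condition holds because $x$ is a descendant of $w^*$.
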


Since we want to find smaller orders than $\sigma^*$ in our proofs to arrive at a contradiction, it is desirable that the fifth point holds in an application of Lemma~\ref{lemma:specialPaths}. In the event this is not the case, we can still gain more structure. We want to show this more formally with the next lemma:

\begin{lemma}		\label{lemma:R0R-1}
If Lemma~\ref{lemma:specialPaths} is applicable such that $\rho(F + \{x, y\}) = \rho^*$ holds, then $e(R_{i_0}) \leq d$ and therefore $i_0 > 1$. Moreover, in this case we have $\mathcal{T}' \in \mathcal F^*$ for the partition obtained.
\end{lemma}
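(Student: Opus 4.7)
\begin{proofSketch}
The plan is to extract structural information from the hypothesis $\rho(F+\{x,y\})=\rho^*$ and then combine it with the global minimality of $\rho^*$. Since $x=v_{l-1}$ and $y=v_l$ lie in distinct red components $K_x$ and $K_y$ of $F$ (the edge between them is blue, hence connects different red components), adding $\{x,y\}$ merges them into a single red component of size $e(K_x)+e(K_y)+1$. If this merged size exceeded $d$, then $\rho_{e(K_x)+e(K_y)+1}$ would strictly increase, and since $e(K_x)+e(K_y)+1$ is strictly larger than both $e(K_x)$ and $e(K_y)$, this increase would occur at an earlier position of the tuple than any decrease at $\rho_{e(K_x)}$ or $\rho_{e(K_y)}$; this would give $\rho(F+\{x,y\})>\rho^*$, a contradiction. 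Hence $e(K_x)+e(K_y)+1\leq d$, and in particular $e(K_x),e(K_y)\leq d$.

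I would then split into two cases according to whether $v_{l-1}\in R_{i_0}$. If so, then $K_x=R_{i_0}$ and the desired bound $e(R_{i_0})\leq d$ is immediate. Otherwise, $R_{i_0}$ is untouched by the addition of $\{x,y\}$, so the forest $F'=(F+\{x,y\})-\{v_{-1},v_0\}$ supplied by Lemma~\ref{lemma:specialPaths} is obtained from $F+\{x,y\}$ by splitting the intact component $R_{i_0}$ along the red bridge $\{v_{-1},v_0\}$ into two subtrees whose sizes sum to $e(R_{i_0})-1$. If $e(R_{i_0})>d$, then $\rho_{e(R_{i_0})}$ decreases by one while only entries $\rho_j$ with $j<e(R_{i_0})$ (i.e., appearing later in the tuple) can change, forcing $\rho(F')<\rho(F+\{x,y\})=\rho^*$. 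As $\mathcal{T}'$ is still a valid decomposition of $G$ into $k$ spanning trees and a forest, this contradicts the definition of $\rho^*$ as the global minimum. Hence $e(R_{i_0})\leq d$ in this case as well. Since $e(R^*)>d$ by construction, we conclude $R_{i_0}\neq R^*$ and therefore $i_0>1$.

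It remains to prove $\mathcal{T}'\in\mathcal{F}^*$. With $i_0>1$, Lemma~\ref{lemma:specialPaths}(5) yields $\mathcal{T}'\in\mathcal{F}$, so only $\rho(F')=\rho^*$ needs to be verified. In the case $v_{l-1}\in R_{i_0}$, the removal of $\{v_{-1},v_0\}$ splits the merged component (of size at most $d$) into two strictly smaller pieces, neither of which contributes to $\rho$; hence $\rho(F')=\rho(F+\{x,y\})=\rho^*$. In the case $v_{l-1}\notin R_{i_0}$, the just-established bound $e(R_{i_0})\leq d$ implies that the two pieces of $R_{i_0}$ each have strictly fewer than $e(R_{i_0})\leq d$ edges, so again $\rho(F')=\rho(F+\{x,y\})=\rho^*$.

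The main technical point is the bookkeeping of $\rho$ under the two modifications---adding $\{x,y\}$ and removing $\{v_{-1},v_0\}$---together with the case split on whether these two edges meet a common red component of $F$. When $v_{l-1}\in R_{i_0}$ the removal partly undoes the merger, but the size controls from the hypothesis already keep every affected component small; when $v_{l-1}\notin R_{i_0}$ the two modifications act on disjoint components, and the bound on $e(R_{i_0})$ has to be extracted indirectly by playing $\rho(F')$ against the global minimality of $\rho^*$.
\end{proofSketch}
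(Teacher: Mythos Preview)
Your argument is correct and rests on the same idea as the paper's: if $e(R_{i_0})>d$, then passing from $F+\{x,y\}$ to $F'$ by deleting $\{v_{-1},v_0\}$ splits a component with more than $d$ edges into two strictly smaller ones, forcing $\rho(F')<\rho^*$ and contradicting the minimality of $\rho^*$.

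The only difference is organizational. You first extract the bound $e(K_x)+e(K_y)+1\leq d$ from the hypothesis and then split into the cases $v_{l-1}\in R_{i_0}$ versus $v_{l-1}\notin R_{i_0}$. The paper dispenses with both: it simply notes that the component of $F+\{x,y\}$ containing the edge $\{v_{-1},v_0\}$ always contains $R_{i_0}$ (regardless of whether $v_{l-1}$ lies in $R_{i_0}$ or not), so its removal splits a component of size at least $e(R_{i_0})$, and the contradiction follows in one stroke. Your preliminary bound and case split are not wrong, just unnecessary for the first conclusion; they do, however, make the ``moreover'' part ($\rho(F')=\rho^*$, hence $\mathcal T'\in\mathcal F^*$) explicit, whereas the paper leaves it implicit via the general fact that deleting a red edge can never increase $\rho$, so $\rho(F')\leq\rho(F+\{x,y\})=\rho^*$ and equality is forced.
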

\begin{beweis}
We use the notation of Lemma~\ref{lemma:specialPaths}. Then $R_{i_0}$ is split into two strictly smaller components in $F'$. If $\rho(F + \{x, y\}) = \rho^*$ and $e(R_{i_0}) > d$, then we had $\rho(F') < \rho^*$, a contradiction.

\end{beweis}

\begin{definition}
Let $\sigma$ be a legal order for $\mathcal{T} = (T_1, \dots, T_k, F) \in \mathcal F$.
Let $K$ be a red component of $\explSG$ and $C$ a child of $K$ with respect to $\sigma$. \\
Moreover, let $(x, y) \in E(T_i)$ be a blue edge such that $x \in V(K)$ and $y \in V(C)$. Then we say that \textit{$C$  is generated by $T_i$} or \textit{$C$ is generated by $(x, y)$}.
\end{definition}
We can now make our first structural statement about $\mathcal{T}^*$ and $\sigma^*$:

\begin{kor}[Corollary 2.5 from \cite{ndtt}] \label{kor:sumOfChildRelation} \phantom{bla} \\ 
Let $C$ be a child of $K$ with respect to $\sigma^*$ that is generated by $(x, y)$. Then $e(K) + e(C) \geq d$. 
\end{kor}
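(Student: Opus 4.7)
The plan is to derive a contradiction by assuming $e(K) + e(C) \leq d - 1$. The idea is that we can swap the blue edge $(x,y)$ into $F^*$ without affecting $\rho$, and then apply the special path machinery of Lemma~\ref{lemma:specialPaths} to produce a decomposition in $\mathcal{F}^*$ admitting a strictly smaller legal order, contradicting the minimality of $\sigma^*$.

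First, I will verify that the residue function is preserved by the proposed merge. Under the assumption $e(K) + e(C) \leq d - 1$, both $K$ and $C$ individually have at most $d-1$ edges, so neither contributes to $\rho(F^*) = \rho^*$, which only tracks components of size at least $d+1$. Adding the edge $\{x,y\}$ to $F^*$ merges $K$ and $C$ into a single red component of size $e(K) + e(C) + 1 \leq d$, which likewise contributes nothing to the residue function. Hence $\rho(F^* + \{x,y\}) = \rho^*$.

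Second, since $K$ precedes $C$ in $\sigma^*$ we have $y \in \sigStarGreater{x}$, so Lemma~\ref{lemma:goodMinPath} produces a minimal special path $[v_0, \dots, v_l]$ with $v_{l-1} = x$ and $v_l = y$ to which Lemma~\ref{lemma:specialPaths} applies, giving a new decomposition $\mathcal{T}'$ with forest $F' = (F^* + \{x,y\}) - \{v_{-1}, v_0\}$. Combining the invariance established in the previous step with Lemma~\ref{lemma:R0R-1}, we obtain $\mathcal{T}' \in \mathcal{F}^*$ and $i_0 > 1$. Clause~5 of Lemma~\ref{lemma:specialPaths} then yields a legal order $\sigma'$ for $\mathcal{T}'$ with $\sigma' < \sigma^*$, contradicting the minimality of $\sigma^*$.

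The argument is essentially a direct combination of the tools already set up in this section, so I do not anticipate any serious technical obstacle. The only point requiring care is the claim that adding $\{x,y\}$ leaves $\rho$ unchanged, which reduces to the elementary observation that $\rho$ is blind to components with at most $d$ edges.
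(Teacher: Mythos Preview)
Your proof is correct and follows essentially the same approach as the paper's: assume $e(K)+e(C)<d$, observe that $\rho(F^*+\{x,y\})=\rho^*$, invoke Lemma~\ref{lemma:goodMinPath} and Lemma~\ref{lemma:R0R-1} to ensure $i_0>1$ and $\mathcal{T}'\in\mathcal F^*$, and then use clause~5 of Lemma~\ref{lemma:specialPaths} to contradict the minimality of $\sigma^*$. You give a bit more detail on the residue-function invariance than the paper does, but the argument is otherwise identical.
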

\vspace{1mm}
\begin{beweis}
We have $y \in \sigStarGreater{x}$ because $C$ is a child of $K$ with respect to $\sigma^*$, and because of Lemma~\ref{lemma:goodMinPath} we can apply Lemma~\ref{lemma:specialPaths}. 
Assume that $e(K) + e(C) < d$. Then $\rho(F^* + \{x, y\}) = \rho(F^*)$ and by Lemma~\ref{lemma:R0R-1} it follows that $i_0 > 1$ with the notation of Lemma~\ref{lemma:specialPaths}. Therefore, the fifth item of Lemma~\ref{lemma:specialPaths} applies and we obtain a legal order $\sigma$ for a partition $\mathcal{T} = (T_1, \dots, T_k, F) \in \mathcal F$ with $\sigma < \sigma^*$.
Because of Lemma~\ref{lemma:R0R-1} we have $\mathcal{T} \in \mathcal F^*$. This contradicts the minimality of $\sigma^*$.
\end{beweis}

We point out a particularly important special case of Corollary \ref{kor:sumOfChildRelation}.

\begin{kor}	\label{kor:smallNoSmallChildren}
Let $K$ be a red component of $H_{\mathcal{T}^*}$ that is small.
Then $K$ has no small children with respect to $\sigma^*$, i.e., there is no red component $C$ of $H_{\mathcal{T}^*}$ such that $C$ is a child of $K$ with respect to $\sigma^*$ and $C$ is small.
\end{kor}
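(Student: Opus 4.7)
The plan is to derive the statement immediately by combining the two tools just established: Corollary \ref{kor:sumOfChildRelation} gives a lower bound on the sum $e(K) + e(C)$ for any parent/child pair under $\sigma^*$, while Observation \ref{beob:sizeOfSmall} gives an incompatible upper bound on the sum of sizes of two small components. There is no need to touch the special-path machinery directly, since all the work is already packaged inside Corollary \ref{kor:sumOfChildRelation}.

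Concretely, I would argue by contradiction. Suppose $K$ is small and that it has a small child $C$ with respect to $\sigma^*$, generated by some blue edge $(x,y)$ with $x \in V(K)$ and $y \in V(C)$. Applying Corollary \ref{kor:sumOfChildRelation} to this parent/child pair yields
\[
    e(K) + e(C) \;\geq\; d.
\]
On the other hand, since both $K$ and $C$ are small, Observation \ref{beob:sizeOfSmall} gives
\[
    e(K) + e(C) \;\leq\; d - 1,
\]
which is the desired contradiction. Hence $K$ can have no small child with respect to $\sigma^*$.

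There is essentially no obstacle here: the lemma is purely a bookkeeping consequence of the two preceding results, and the only thing to double-check is that Observation \ref{beob:sizeOfSmall} applies in both regimes of $\density$ (namely $d \leq k+1$ and $k+1 < d < 3(k+1)$), which it does by its statement. For this reason I would keep the proof to a couple of lines, with explicit pointers to Corollary \ref{kor:sumOfChildRelation} and Observation \ref{beob:sizeOfSmall}, and not reintroduce any of the special-path notation.
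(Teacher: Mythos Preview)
Your proposal is correct and follows exactly the paper's own argument: assume a small child $C$ exists, apply Observation~\ref{beob:sizeOfSmall} to get $e(K)+e(C)\leq d-1$, and contradict Corollary~\ref{kor:sumOfChildRelation}. Nothing more is needed.
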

\vspace{1mm}
\begin{beweis}
Assume such components $K, C$ did exist. Then $e(K) + e(C) \leq d - 1$ by Observation~\ref{beob:sizeOfSmall}, contradicting Corollary~\ref{kor:sumOfChildRelation}.
\end{beweis}

\section{Exchanging Edges}

In this section, we define a useful exchange operation and show how to reorient edges after the exchange to maintain the proper structure of the decomposition. We then use this exchange operation to show that $R^*$ does not have small children. After that we prove a lemma which shows three useful cases which can occur when trying to exchange edges. For this section we define $(T_1, \dots, T_k, F) \in \mathcal F$. 

\begin{definition}
Let $x, y$ be vertices in a component $K$ of $F$. Then we write $P_F(x, y) \subseteq K$ for the unique simple path from $x$ to $y$ in $F$. Similarly, for two vertices $x, y \in V$ such that $x$ is a descendant of $y$ in $T \in \{T_1, \dots, T_k\}$, let $P^T(x, y) \subseteq T$ be the unique directed path from $x$ to $y$ in $T$.
\end{definition}

\begin{definition}
Let $e \in E(T_i)$ and $e' \in E(F)$. If $(T_i - e) + e'$ is a spanning tree and $(F - e') + e$ is a forest (ignoring orientations), we say that $e'$ can be exchanged with $e$, and write that $e \leftrightarrow e'$ for $T_i$ and $F$.
We omit ``for $T_i$ and $F$'' if it is clear from context which forests are under consideration.
\end{definition}

The next lemma is obvious and we omit the proof, but it very usefully characterizes when $e \leftrightarrow e'$.

\begin{lemma} \label{lemma:exchange}
Let $u \in V - r$, $u'$ be the parent vertex of $u$ in $T_i$ and $e \in E(F)$.\\
Then, the following are equivalent:
\begin{enumerate}[(a)]
	\item $(u, u') \leftrightarrow e$.
	\item The edge $(u,u')$ lies in the unique cycle of $T_{i} + e$.
	\item One of the end vertices of $e$ is a descendant of $u$ in $T_i$ and the other is not.
\end{enumerate}
\end{lemma}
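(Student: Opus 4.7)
The plan is to prove the three-way equivalence in two steps: (b) $\iff$ (c), which is a purely tree-structural statement about fundamental cycles, and (a) $\iff$ (b), which is the standard matroid cycle-exchange together with a small forest-side check.

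For (b) $\iff$ (c) I would start from the fact that, since $T_i$ is a spanning tree rooted at $r$ with edges oriented toward $r$, deleting the edge $(u, u')$ from $T_i$ partitions $V(G)$ into exactly two sets: the vertex set of the subtree of $T_i$ rooted at $u$ (consisting of $u$ together with all of its descendants in $T_i$) and its complement (containing $u'$ and $r$). The unique cycle of $T_i + e$ is $e$ together with the unique $T_i$-path between the endpoints of $e$; this path uses the edge $(u, u')$ if and only if the two endpoints of $e$ lie on opposite sides of that partition, which is exactly condition (c). Nothing beyond standard tree theory is required for this half.

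For (a) $\iff$ (b), the direction $(a) \Rightarrow (b)$ is immediate: if $(u, u')$ were not on the unique cycle of $T_i + e$, then deleting it from $T_i + e$ would disconnect the graph, so $(T_i - (u, u')) + e$ could not be a spanning tree. Conversely, if (b) holds then $(T_i + e) - (u, u')$ is a connected spanning subgraph on $v(G) - 1$ edges, hence a spanning tree, which handles the tree half of (a). What remains is to check that $(F - e) + (u, u')$ is a forest, equivalently that $u$ and $u'$ lie in different components of $F - e$; this is the only place where one goes beyond the $T_i$-side structure, and is a short bookkeeping step using that $F$ is already a forest and that $e$ is the edge of $F$ being removed. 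The main obstacle, such as it is, lies exactly in this forest check on $F$, since it does not follow purely from the cycle argument inside $T_i$; everything else is direct from the standard fundamental-cycle characterization and is obvious enough that the author omits it.
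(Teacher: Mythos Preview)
The paper omits the proof entirely, so there is no argument to compare against; your handling of (b) $\iff$ (c) and of the tree half of (a) $\iff$ (b) via the fundamental-cycle description is exactly the standard argument and is correct.

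Where you should be more careful is the forest check, which you correctly isolate but then wave off as ``a short bookkeeping step.'' In fact nothing in (b) or (c) forces $u$ and $u'$ to lie in different components of $F-e$: if $F$ already contains a $u$--$u'$ path that avoids $e$, then $(F-e)+\{u,u'\}$ has a cycle even though (b) and (c) both hold, so (b) $\Rightarrow$ (a) fails at the stated level of generality. (Concretely: take $T_i$ with $u'$ the parent of $u$ and some vertex $w$ a sibling of $u$, and let $F$ contain a path $u$--$p$--$u'$ together with the edge $e=\{u,w\}$; then (c) holds for $e$ but $(F-e)+\{u,u'\}$ contains the triangle $u,p,u'$.) What rescues the lemma in the paper is that in every application the blue edge $(u,u')$ is of the form $(x,x')$ running from a red component $K$ to a \emph{distinct} child component, so $u$ and $u'$ lie in different components of $F$ and the forest condition is automatic. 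You should record this extra hypothesis explicitly rather than asserting the forest side follows by bookkeeping.
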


As we want to bound the number of small children in a red component $K$, it is useful to understand how edges of a tree $T$ can be exchanged with red edges.
\begin{lemma} \label{lemma:allDescendants}
Let $x, y \neq r$ be vertices in a red component $K$ of $\explSG$. Let $x'$ and $y'$ be the parent vertices of $x$ and $y$, respectively, in $T \in \{T_1, \dots, T_k\}$. Furthermore, let $P_F(x, y) = [x_1, \dots, x_n]$ with $x_1 = x$, $x_n = y$.
\begin{enumerate}[(a)]
	\item \label{i:dec1} If there exists an edge $\{x_i, x_{i+1}\}, 1 \leq i < n$ such that $(x, x') \leftrightarrow \{x_i, x_{i+1}\}$ holds, and we choose the minimal integer $i$ with this property, then for all $i' \leq i$ we have that $x_{i'}$ is a descendant of $x$ in $T$.
	\item \label{i:dec2} If there is no edge $\{x_i, x_{i+1}\}, 1 \leq i < n$, such that $(x, x') \leftrightarrow \{x_i, x_{i+1}\}$ holds, then all vertices in $P_F(x, y)$ (in particular, $y$) are descendants of $x$ in $T$.
\end{enumerate}
\end{lemma}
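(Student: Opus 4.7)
The plan is to apply Lemma~\ref{lemma:exchange} iteratively along the red path $P_F(x,y) = [x_1, \dots, x_n]$. The key reformulation is that, by Lemma~\ref{lemma:exchange}, $(x, x') \leftrightarrow \{x_{i'}, x_{i'+1}\}$ if and only if exactly one of $x_{i'}, x_{i'+1}$ is a descendant of $x$ in $T$. Since $x_1 = x$ is trivially a descendant of itself in $T$, we can propagate the descendant property along the path by reading off, at each step, whether the exchange is available.

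For part (a), I would induct on $i' \in \{1, \dots, i\}$, claiming that $x_{i'}$ is a descendant of $x$ in $T$. The base case $i' = 1$ is immediate from $x_1 = x$. For the inductive step, assume $x_{i'}$ is a descendant and $i' < i$. By the minimality of $i$, the exchange $(x, x') \leftrightarrow \{x_{i'}, x_{i'+1}\}$ fails, so by Lemma~\ref{lemma:exchange} the endpoints $x_{i'}$ and $x_{i'+1}$ are either both descendants of $x$ or both non-descendants. Since $x_{i'}$ is a descendant, so is $x_{i'+1}$, completing the induction.

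For part (b), the same induction applies uniformly for every $i' \in \{1, \dots, n\}$: by hypothesis no exchange is available along the entire path, so the inductive step from $x_{i'}$ to $x_{i'+1}$ goes through without any terminating index $i$ being encountered. In particular, $x_n = y$ is a descendant of $x$ in $T$, which gives the stated conclusion.

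I do not expect a significant obstacle here: the entire argument is just a short induction built on top of Lemma~\ref{lemma:exchange}. The only minor subtlety to get right is that ``descendant of $x$ in $T$'' must include $x$ itself (so that the base case of the induction has content), and that the minimality of $i$ is used only to rule out exchangeability strictly before $i$, so that the induction can safely proceed up to and including index $i$.
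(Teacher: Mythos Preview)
Your proposal is correct and is essentially the same argument as the paper's: both rely on Lemma~\ref{lemma:exchange} to show that the descendant property propagates along $P_F(x,y)$ until the first exchangeable edge is reached. The only cosmetic difference is that the paper phrases it as a minimal-counterexample argument (pick the first non-descendant and derive a contradiction) while you phrase it as a forward induction, which is logically equivalent.
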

\vspace{1mm}
\begin{beweis}
When we speak of descendancy in this proof, it is always with respect to $T$. For part (a), choose $i$ minimally. Assume there was $i' \leq i$ such that $x_{i'}$ is not a descendant of $x$, and pick $i'$ to be the smallest integer with regards to this property. It follows that $i' > 1$ and $x_{i'-1}$ is a descendant of $x$. By Lemma~\ref{lemma:exchange} $(x, x') \leftrightarrow \{x_{i' - 1}, x_{i'}\}$, contradicting the minimality of $i$. \\
\ref{i:dec2}: If there exists a vertex $x_{i'}$ that is not a descendant of $x$, then we can analogously conclude that $(x, x') \leftrightarrow \{x_{i' - 1}, x_{i'}\}$ by choosing $i'$ minimally.
\end{beweis}

We now define an exchange operation which also will reorient the spanning trees so as to always have all vertices having a directed path towards $r$.
\begin{definition} \label{def:executeExchange}
Let $u \in V - r$ and $u'$ be the parent vertex of $u$ in $T_i$, let $e = \{v, w\} \in E(F)$ be a red edge, where $v$ is a descendant of $u$ in $T_i$ and $w$ is not a descendant of $u$ in $T_i$. We say that we obtain $T', F'$ by performing the exchange $(u, u') \leftrightarrow e$ in $T$ and $F$, where $T'$ and $F'$ are obtained in the following way: 
\begin{itemize}
    \item $T' = (T_i + (v, w)) - (u, u')$,
    \item $F' = (F - \{v, w\}) + \{u, u'\}$
    \item Reverse the orientation of the edges in $T'$ on $P^{T_{i}}(v,u)$.
\end{itemize}
\end{definition}

\begin{lemma}	\label{lemma:executeExchange}
Let $e = \{v,w\} \in E(F)$ be a red edge and $u'$ be the parent of a vertex $u \in T_{i}$. If we obtain the decomposition $(T_1, \dots, T_{i-1}, T', T_{i+1}, \dots, T_k, F')$ by performing the exchange $(u,u') \leftrightarrow e$ in $T_{i}$ and $F$, then $T_{1},\ldots,T_{i-1},T',T_{i+1},\ldots,T_{k}$ are spanning trees, $F'$ is an undirected forest, and the edges of $T'$ are oriented such that all vertices $v \in V(T')$ have a directed path from $v$ to $r$.
\end{lemma}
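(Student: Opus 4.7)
The plan is to split the verification into three pieces: (i) $F'$ is a forest, (ii) the underlying undirected graph of $T'$ is a spanning tree, and (iii) the chosen orientation of $T'$ points every vertex toward $r$. The first two are essentially immediate from Lemma~\ref{lemma:exchange}, while (iii) is the step requiring real work.

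For parts (i) and (ii), note that Lemma~\ref{lemma:exchange} applies because, by hypothesis, $v$ is a descendant of $u$ in $T_i$ and $w$ is not. So $(u,u') \leftrightarrow \{v,w\}$ holds, which by definition means that $(T_i - (u,u')) + \{v,w\}$ is a spanning tree and $(F - \{v,w\}) + \{u,u'\}$ is a forest. Since the edge reversals in Definition~\ref{def:executeExchange} do not alter the underlying undirected graph of $T'$, and since reorientation does not affect $F'$, the structural parts are handled.

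For part (iii), let $S$ be the set of descendants of $u$ in $T_i$ (with $u$ included), so that $v \in S$ and $w \notin S$. In $T_i$, every edge inside $S$ is oriented toward $u$ (because $u$ is the root of the subtree $T_i[S]$), and the edges of $T_i$ with both endpoints in $V \setminus S$ form a subtree still oriented toward $r$. Passing to $T'$ affects only the edges inside $S$, the removed edge $(u,u')$, and the newly added edge $(v,w)$. The vertices in $V \setminus S$ therefore retain directed paths to $r$. For vertices in $S$, the critical observation is that reversing exactly the edges on $P^{T_i}(v,u)$ is precisely the operation that re-roots the tree $T_i[S]$ at $v$: every vertex $x \in S$ had a unique directed path in $T_i[S]$ to $u$; this path meets $P^{T_i}(v,u)$ at a first vertex $p$, and after reversal the subpath from $p$ to $u$ becomes a directed path from $p$ to $v$, while the prefix from $x$ to $p$ is unchanged. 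Hence in $T'$ every $x \in S$ has a directed path to $v$.

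Putting everything together, one concludes for $x \in S$ that the concatenation $x \rightsquigarrow v \to w \rightsquigarrow r$ is a directed path to $r$ in $T'$, where the first segment uses the rerooted $T'[S]$, the middle step uses the newly added oriented edge $(v,w)$, and the final segment uses the unchanged orientation of $T_i$ on $V \setminus S$. Combined with the $V \setminus S$ case, this gives the desired property. The main obstacle is really only the bookkeeping for the rerooting step: one must check carefully that reversing the single directed $v$-to-$u$ path has the effect of flipping all relevant directions in $T_i[S]$, which I would make precise by the short induction just sketched (following an arbitrary vertex's path in $T_i$ until it hits $P^{T_i}(v,u)$, then continuing along the reversed portion).
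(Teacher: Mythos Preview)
Your proof is correct and follows essentially the same approach as the paper: both reduce to Lemma~\ref{lemma:exchange} for the structural claims and then, for the orientation, split vertices into descendants of $u$ (your $S$, the paper's $U$) versus the rest, and for $x \in S$ follow $P^{T_i}(x,u)$ to its first meeting point $p$ with $P^{T_i}(v,u)$ before concatenating with the reversed segment to $v$, the new edge $(v,w)$, and $P^{T_i}(w,r)$. One small phrasing slip: it is the reversed segment originally going from $v$ to $p$ that now provides the directed $p$-to-$v$ path, not the segment from $p$ to $u$; but the intended construction is clear and matches the paper's.
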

\vspace{1mm}
\begin{beweis}
By Lemma \ref{lemma:exchange} it suffices to show that the orientation of the edges in $T'$ is correct and therefore $P^{T'}(x, r)$ exists for every vertex $x \in V$. 
Note that $T_- := T_i - (u, u')$ consists of two components $U$ and $U'$ with $u \in V(U)$ and $u' \in V(U')$.
For each vertex of $x \in U'$ the path $P^{T_-}(x, r) = P^{T_i}(x, r)$ exists and for each $x \in U'$ the path $P^{T_-}(x, r)$ does not exist.
Now observe $U$ is a subtree of $T_i$ with root $u$ and $P^{T_i}(x, u)$ exists for all $x \in V(U)$. We see that $w \in U'$, since $w$ is not a descendant of $u$ in $T_i$.
For every vertex $x \in V(U)$ let $p_x$ be the first vertex on $P^{T_i}(x, u)$ that is also on the path $P^{T_i}(v, u)$ that is reoriented. We can now construct $P^{T'}(x, p_x)$ as:
\begin{align*}
P^{T_i}(x, p_x) \oplus \rP^{T_i}(v, p_x) \oplus [v, w] \oplus P^{T_i}(w, r).
\end{align*}
\end{beweis}

The described procedure enables us to enforce that $R^*$ does not have small children with respect to $\sigma^*$ and thus, the density around $R^*$ is high.

\begin{lemma} \label{lemma:rootNoChildren}
    The component $R^*$ does not have small children with respect to $\sigma^*$.
\end{lemma}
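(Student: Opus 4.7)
Suppose for contradiction that $R^*$ has a small child $C$ with respect to $\sigma^*$. Then there is a blue edge $(x,y) \in E(T^*_j)$ for some $j \in \{1,\ldots,k\}$ with $x \in V(R^*)$ and $y \in V(C)$, and since $r$ has no outgoing blue edge, $x \neq r$. The plan is to recolor $(x,y)$ red while simultaneously recoloring a carefully chosen red edge of $R^*$ blue via a single exchange, and to show that the resulting forest $F'$ satisfies $\rho(F') < \rho^*$ lexicographically, contradicting the minimality of $\rho^*$.

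Let $P_{F^*}(x,r) = [x = x_1, x_2, \ldots, x_n = r]$; note that $n \geq 2$. Since $r$ is the root of $T^*_j$ and $x \neq r$, the vertex $r$ is not a descendant of $x$ in $T^*_j$, whereas $x_1 = x$ trivially is. Hence by Lemma~\ref{lemma:allDescendants} there is a minimal index $i$ with $(x,y) \leftrightarrow \{x_i, x_{i+1}\}$, and for this $i$ the vertices $x_1,\ldots,x_i$ are all descendants of $x$ in $T^*_j$. Performing this exchange via Lemma~\ref{lemma:executeExchange} yields a valid decomposition $\mathcal{T}'$ whose forest is $F' = F^* - \{x_i, x_{i+1}\} + \{x,y\}$. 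The removal of $\{x_i, x_{i+1}\}$ splits $R^*$ into subtrees $A$ (containing $r$) and $B$ (containing $x$) with $e(A) + e(B) + 1 = e(R^*)$, and then $\{x,y\}$ merges $B$ with $C$ into one new component $D$ of size $e(B) + e(C) + 1$; all other red components of $F^*$ are unchanged.

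The crux is the inequality $e(A) > e(C)$: once this is in hand, $e(A) < e(R^*)$ and $e(D) = e(R^*) - e(A) + e(C) < e(R^*)$, so the exchange removes the component $R^*$ of size $e(R^*)$ without creating any new component of size at least $e(R^*)$. Therefore $\rho(F')$ agrees with $\rho^*$ on all coordinates above $e(R^*)$ and is strictly smaller at coordinate $e(R^*)$, contradicting the minimality of $\rho^*$.

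I expect the verification of $e(A) > e(C)$ to be the main obstacle, and it is precisely where the refined choice of $r$ (in the case $d > k+1$: degree $3$ in $R^*$, or two edge-disjoint $R^*$-paths of length at least $2$ starting at $r$) is used. I would split on whether $\{x_i, x_{i+1}\}$ is incident to $r$. If it is not, then all of the at least two red edges of $R^*$ at $r$ still lie in $A$, giving $e(A) \geq 2$, which suffices for both $d \leq k+1$ (where $e(C) = 0$) and $d > k+1$ (where $e(C) \leq 1$). If $i = n-1$, so the removed edge is incident to $r$, then $e(A) \geq 1$ follows from $r$ having degree at least $2$ in $R^*$, settling the $d \leq k+1$ case; for $d > k+1$ (where we need $e(A) \geq 2$), either the third red neighbour of $r$ contributes another edge to $A$, or the second edge-disjoint length-$\geq 2$ path from $r$ survives in $A$, in either sub-case giving $e(A) \geq 2$.
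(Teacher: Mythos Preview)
Your argument is correct and is essentially the paper's own proof: both perform the exchange $(x,y)\leftrightarrow\{x_i,x_{i+1}\}$ on the red $x$--$r$ path, split $R^*$ into the $r$-side $A$ (the paper's $K_r$) and the $x$-side, and derive $\rho(F')<\rho^*$ from $e(A)>e(C)$ via the refined choice of $r$. Your case split on whether the removed edge is incident to $r$, together with the degree-$3$/two-length-$2$-paths alternative for $d>k+1$, matches the paper's reasoning exactly.
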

\begin{proof}
    Suppose to the contrary that $R^*$ has a child $C$ with respect to $\sigma^*$ that is generated by $(x, x')$. Let $x_i$ be the first vertex on $P_{F^*}(x, r) = [x_1, \ldots, x_n]$ that is not a descendant of $x$. This vertex exists and $i > 1$, since $x$ is a descendant of $x$ and $r$ is not. Then we have that $(x, x') \leftrightarrow \{x_{i-1}, x_i\}$. We obtain $T', F'$ by performing $(x, x') \leftrightarrow \{x_{i-1}, x_i\}$. By the way we chose $r$ the component $K_r$ of $r$ in $F'$ contains at least one edge if $d \leq k + 1$, and at least two edges if $d > k + 1$. The component $K_x$ of $x$ in $F'$ contains $\{x, x'\}$, but it does not contain $\{x_{i-1}, x_i\}$. Thus we have
    \[e(K_x) = e(R^*) + 1 + e(C) - 1 - e(K_r) < e(R^*).\]
    Since $K_r$ is a proper subgraph of $R^*$, we obtain a contradiction to the minimality of $\rho^*$.
\end{proof}

We want to bound the number of small children for every non-small red component. Thus, we now consider the situation that there are two distinct children generated by two edges of the same blue tree. The following lemma shows that we can always exchange at least one of the generating edges with an edge of the red path connecting the tails of the generating edges.

\begin{figure}[htp]
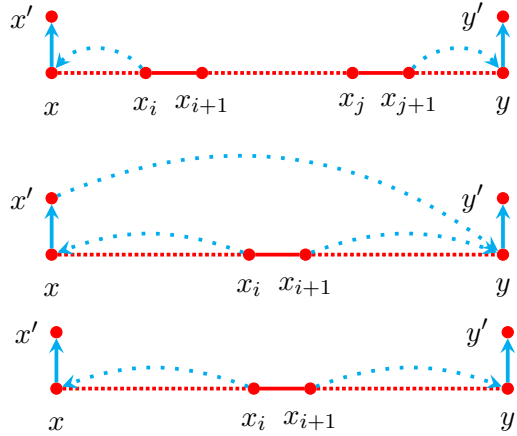

\center 
\caseOnePic
\caseTwoPic
\caseThreePic
	\caption{Case 1, 2 and 3 of Lemma \ref{sd:cases}}
	\label{fig:cases}
\end{figure}

\begin{lemma}		\label{sd:cases}
Let  and $\mathcal{T} = (T_1, \dots, T_k, F) \in \mathcal F$.
Let $K$ be a red component of $H_{\mathcal{T}}$, $T \in \{T_1, \dots, T_k\}$ and let $C_1, C_2$ be distinct children of $K$ with respect to $\sigma$ that are generated by edges $(x, x'), (y, y') \in E(T)$, respectively. Let $y$ not be a descendant of $x$ in $T$. Furthermore, let $P_F(x, y) = [x_1, \dots, x_n]$, so $x_1 = x$ and $x_n = y$.
Then one of the following three cases applies, which are depicted in Figure~\ref{fig:cases}:
\begin{enumerate}
	\item There is an edge $\{x_i, x_{i+1}\}$ and an edge $\{x_j, x_{j+1}\}$ such that $i < j$, $(x, x') \leftrightarrow \{x_i, x_{i+1}\}$ and $(y, y') \leftrightarrow \{x_j, x_{j+1}\}$. 
	Furthermore, for all vertices $x_{i'}$ with $i' \leq i$ we have that $x_{i'}$ is a descendant of $x$ in $T$ and $y \not\in V(P^T(x_{i'}, x))$. Likewise, for all vertices $x_{j'}$ with $j' \geq j + 1$ we have that $x_{j'}$ is a descendant of $y$ and $x \not\in V(P^T(x_{j'}, y))$ (therefore $x_{j'}$ is not a descendant of $x$ in $T$). Further, we have $E(P^T(x_i, x')) \cap E(P^T(x_{j+1}, y')) = \varnothing$. \\
	In this case we say that $x \caseOne y$ holds (for $(T, F))$ with edges $(x_i, x_{i+1}), (x_j, x_{j+1})$.
	
	\item We have that $x$ is a descendant of $y$ in $T$ and there is an edge $\{x_i, x_{i+1}\}$ such that  $(x, x') \leftrightarrow \{x_i, x_{i+1}\}$ holds. Furthermore, for all vertices $x_{i'}$ with $i' \geq i + 1$ we have that $x_{i'}$ is a descendant of $y$ in $T$ and $x \not\in V(P^T(x_{i'}, y))$ (therefore $x_{i'}$ is not a descendant of $x$ in $T$). Additionally, $x_i$ is a descendant of $x$ in $T$ and $E(P^T(x_i, x')) \cap E(P^T(x_{i+1}, y')) = \varnothing$. \\
	In this case we say that $x \caseTwo y$ holds (for $(T, F)$) with edge $(x_i, x_{i+1})$.
	
	\item The vertex $x$ is not a descendant of $y$ (and $y$ is not a descendant of $x$) in $T$ and there is an edge $\{x_i, x_{i+1}\}$ such that both $(x, x') \leftrightarrow \{x_i, x_{i+1}\}$ and $(y, y') \leftrightarrow \{x_i, x_{i+1}\}$ hold.
	Furthermore, for all vertices $x_{i'}$ with $i' \leq i$ we have that $x_{i'}$ is a descendant of $x$ in $T$ and $y \not\in V(P^T(x_{i'}, x))$ (therefore $x_{i'}$ is not a descendant of $y$). 
	Likewise, for all vertices $x_{j'}$ with $j' \geq i + 1$ we have that $x_{j'}$ is a descendant of $y$ and $x \not\in V(P^T(x_{j'}, y))$ (therefore $x_{j'}$ is not a descendant of $x$ in $T$). 
	We have $E(P^T(x_i, x')) \cap E(P^T(x_{i+1}, y')) = \varnothing$. \\ 
	In this case we say that $x \caseThree y$ holds (for $(T, F))$ with edge $(x_i, x_{i+1})$.
\end{enumerate}
\end{lemma}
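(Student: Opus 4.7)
My plan is to apply Lemma~\ref{lemma:allDescendants} to the red path $P_F(x, y) = [x_1, \dots, x_n]$ from both of its endpoints --- once with $(x, x')$ starting at $x_1 = x$, and once with $(y, y')$ starting at $x_n = y$ --- and use the positions of the resulting exchangeable edges, together with the descendancy information from part~(a) of that lemma, to detect which of the three cases we are in.

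First I set up indices. Since $y$ is not a descendant of $x$, Lemma~\ref{lemma:allDescendants}(b) forces the existence of at least one $i$ with $(x, x') \leftrightarrow \{x_i, x_{i+1}\}$; write $i_{\min}$ and $i_{\max}$ for the smallest and largest such indices. Part~(a) gives that $x_{i'}$ is a descendant of $x$ for $i' \leq i_{\min}$, and by maximality of $i_{\max}$ the vertices $x_{i_{\max}+1}, \dots, x_n$ share the descendancy-of-$x$ status of $x_n = y$, i.e.\ none is a descendant of $x$. Analogously from the $y$-side: either no edge on the path is exchangeable with $(y, y')$, in which case Lemma~\ref{lemma:allDescendants}(b) applied to $P_F(y, x)$ forces $x$ to be a descendant of $y$, or there exists a maximal index $j_{\max}$ with $x_{j'}$ a descendant of $y$ for every $j' \geq j_{\max}+1$.

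I then branch on whether $x$ is a descendant of $y$ in $T$. If \emph{not}, the subtrees of $x$ and $y$ in $T$ are disjoint, so ``descendant of $x$'' and ``descendant of $y$'' are mutually exclusive labels on the relevant vertices; in particular $j_{\max}$ must exist, and $x_{j_{\max}+1}$ (a descendant of $y$) cannot lie in $\{x_1, \dots, x_{i_{\min}}\}$ (descendants of $x$), giving $j_{\max} \geq i_{\min}$. Taking $i = i_{\min}$ and $j = j_{\max}$, the strict inequality $i_{\min} < j_{\max}$ yields Case~1, and equality yields Case~3 (the single edge $\{x_{i_{\min}}, x_{i_{\min}+1}\}$ simultaneously witnesses the transition out of the descendants of $x$ and into the descendants of $y$). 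If $x$ \emph{is} a descendant of $y$, I instead compare $i_{\max}$ with $j_{\max}$: when $j_{\max}$ is absent or $j_{\max} \leq i_{\max}$, I take $i = i_{\max}$ for Case~2 (vertices past $i_{\max}$ are not descendants of $x$ by maximality of $i_{\max}$, and are descendants of $y$ either by Lemma~\ref{lemma:allDescendants}(b) or because they lie past the last $(y, y')$-transition); when $j_{\max} > i_{\max}$, I switch to Case~1 with $i = i_{\min}$ and $j = j_{\max}$, using the key observation that for $j' \geq j_{\max}+1 > i_{\max}$ the vertex $x_{j'}$ is not a descendant of $x$, which in the present scenario is equivalent to $x \notin V(P^T(x_{j'}, y))$, as the case requires.

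The remaining conditions are tree-geometric: ``$y \notin V(P^T(x_{i'}, x))$'' holds because any $y$ on the ancestor-path from a descendant of $x$ up to $x$ would itself be a descendant of $x$, contradicting the hypothesis; and the edge-disjointness $E(P^T(x_i, x')) \cap E(P^T(x_{j+1}, y')) = \varnothing$ holds because in every case the relevant upper endpoint ($x_{j+1}$ in Cases~1 and~3, $x_{i+1}$ in Case~2) is not a descendant of $x$, so the second path avoids the subtree of $x$ entirely, while the first path lies inside that subtree except for its terminal edge $(x, x')$. I expect the main obstacle to be precisely the branching inside the scenario where $x$ is a descendant of $y$: no single canonical choice of $i$ handles all configurations, and one must alternate between $i_{\min}$ and $i_{\max}$ based on the position of $j_{\max}$, verifying in each branch that every descendancy requirement of the chosen case is met.
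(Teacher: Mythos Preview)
Your proof is correct and takes essentially the same approach as the paper's: both apply Lemma~\ref{lemma:allDescendants} from each endpoint of $P_F(x,y)$ to locate extremal exchange indices and then case-split on their relative positions together with the descendancy relation between $x$ and $y$. The only difference is the order of branching --- you split first on whether $x$ is a descendant of $y$, whereas the paper splits first on whether a $(y,y')$-exchangeable edge exists and then on whether $x$ lies on the relevant $T$-paths --- which occasionally leads you to certify Case~2 with $i=i_{\max}$ where the paper would certify Case~1 with $(i_{\min},j_{\max})$, but since the three cases are not mutually exclusive both certifications are valid.
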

\vspace{1mm}
\begin{beweis}
When we speak of descendancy in this proof, it is always with respect to $T$. We will first show that if we exclude the edge-disjoint path condition in each of the cases, then one of the three cases occurs, and then at the end of the proof, deduce the edge-disjoint path claim.\\
By Lemma~\ref{lemma:allDescendants}\ref{i:dec2} we have that there is always an edge $\{x_i, x_{i+1}\}$ such that $(x, x') \leftrightarrow \{x_i, x_{i+1}\}$ holds, for otherwise $y$ is a descendant of $x$.\\
We first assume that there is no edge $\{x_j, x_{j+1}\}$ such that $(y, y') \leftrightarrow \{x_j, x_{j+1}\}$. By Lemma~\ref{lemma:allDescendants}\ref{i:dec2} all vertices on $P_F(x, y)$ are descendants of $y$. \\
Now choose the maximum $i$ such that $(x, x') \leftrightarrow \{x_i, x_{i+1}\}$. 
Assuming that there is an $i' \geq i + 1$ such that $x \in V(P^T(x_{i'}, y))$, then $i' < n$.
If we choose the maximum $i'$ then $x \not\in V(P^T(x_{i' + 1}, y))$. Thus, $x_{i'}$ is a descendant of $x$ and $x_{i' + 1}$ is not a descendant of $x$. Therefore $(x, x') \leftrightarrow \{x_{i'}, x_{i'+1}\}$ holds by Lemma~\ref{lemma:exchange}, which contradicts $i$ being maximum.
Therefore $x \not\in V(P^T(x_{i'}, y))$ for all $i' \geq i + 1$. Since $x_{i+1}$ also is not a descendant of $x$ and $(x, x') \leftrightarrow \{x_i, x_{i+1}\}$ holds, $x_i$ must be a descendant of $x$ by Lemma~\ref{lemma:exchange}. Therefore, if this occurs, we are in Case 2. \\
Now assume that there exists an edge $\{x_j, x_{j+1}\}$ such that $(y, y') \leftrightarrow \{x_j, x_{j+1}\}$ holds. Choose the maximum $j$. By Lemma~\ref{lemma:allDescendants}\ref{i:dec1} all $x_{j'}, j' \geq j + 1$, are descendants of $y$.
If there is a $j' \geq j + 1$ such that $x \in V(P^T(x_{j'}, y))$ holds, then $x$ is a descendant of $y$. Choose the maximum $j'$ with this property. We have $j' < n$, $x_{j'}$ is a descendant of $x$ and $x \not\in V(P^T(x_{j' + 1}, y))$. Therefore $x_{j' + 1}$ is not a descendant of $x$. Because of Lemma~\ref{lemma:exchange} we have $(x, x') \leftrightarrow \{x_{j'}, x_{j'+1}\}$. Furthermore, since $j'$ is maximum,  for all $j'' \geq j' + 1$ we have $x \not\in V(P^T(x_{j''}, y))$. Thus again we are in Case 2.\\
We now assume that for all $j' \geq j + 1$ we have that $x_{j'}$ is a descendant of $y$ and $x \not\in V(P^T(x_{j'}, y))$.\\
Now choose the minimum $i$ such that $(x,x') \leftrightarrow (x_{i},x_{i+1})$. By Lemma \ref{lemma:allDescendants}\ref{i:dec1} all $x_{i'}$ where $i' \leq i$ are descendants of $x$ and furthermore, $y \not\in V(P^T(x_{i'}, x))$, for otherwise $y$ would be a descendant of $x$. Now we split into cases depending on if $i >j$, $i < j$ or $i = j$.
If $i > j$, then $x_i$ is a descendant of both $x$ and $y$. We also have $x \not\in V(P^T(x_i, y))$ and $y \not\in V(P^T(x_i, x))$, which is a contradiction because one of the vertices $x, y$ on the path $P^T(x_i, r)$ is reached before the other. If $i = j$ and $x$ is a descendant of $y$, then we are in Case 2. If $i = j$ and $x$ is not a descendant of $y$, then we are in Case 3. If $i < j$, then Case 1 applies. 
Thus we have shown that one of the three cases always applies up to the edge disjoint path conditions, which we show now. Let
\[(P_x, P_y) = 
	\begin{cases}
		(P^T(x_i, x'), P^T(x_{j+1}, y')) \text{, if Case 1 applies}\\
		(P^T(x_i, x'), P^T(x_{i+1}, y')) \text{, if Case 2 or Case 3 applies.}
	\end{cases}
\]
Assume that an edge $(v, v') \in E(P_x) \cap E(P_y)$ exists, then the paths $P^T(v, x), P^T(v, y)$ would exist. Therefore, either
$y \in V(P^T(v, x))$ or $x \in V(P^T(v, y)) \subseteq V(P_y)$ hold. The former can be ruled out because $y$ is not a descendant of $x$. The latter is also impossible in all three cases by their definition.
\end{beweis}

Note that if $x \caseOne y$ then it does not follow that $y \caseOne x$. Similarly with Case 2. 
However, if $x \caseThree y$ with edge $(u, v)$, then also $y \caseThree x$ with edge $(v, u)$.

As it will be useful later, we make the following notational definition:
\begin{notation}
If (for $T, F$) $x \caseOne y$ with two edges $(u_x, u_y)$ and $(v_x, v_y)$ or we have (for $T, F$) $y \caseOne x$ with edges $(v_y, v_x)$ and $(u_y, u_x)$, then we write that (for $T, F$) $x \caseOneUndir y$ holds with edges $(u_x, u_y)$ and $(v_x, v_y)$.
\end{notation}

\renewcommand\explSG{H_{\mathcal{T}^*}}

\section{Bounding small children - the cases \boldmath\texorpdfstring{$x \caseTwo y$}{x -2-> y} and \boldmath\texorpdfstring{$x \caseThree y$}{x -3- y}}

In this and the following section we show some situations in which we can utilize the exchanges we identified in the previous section to achieve a smaller legal order or decrease the residue function. By excluding these situations we obtain structure in $\mathcal T^*$. The results of this section will be summarized in Lemma~\ref{satz:caseTwoThree}. \\
Let $K$ be a red component of $\explSG$ and let $C_1, C_2$ be distinct small children of $K$ with respect to $\sigma^*$, which are generated by the edges $(x, x')$ and $(y, y')$ of the tree $T^* := T^*_\alpha \in \{T^*_1, \dots, T^*_k\}$, respectively. Throughout this section, suppose for $T^*, F^*$ that $x \caseTwo y$ or $x \caseThree y$ with an edge $(u, v)$. We fix the following notation for ease of the reader:

\begin{notation}	\phantom{force line break}
\begin{itemize}
	\item Let $K_1$ be the component of $K - \{u, v\}$ that contains $x$, and let $K_2$ be the other component that contains $y$. Thus, $e(K_1) + e(K_2) + 1 = e(K)$.
	\item We obtain $T, F$ by performing the exchange $(x, x') \leftrightarrow \{u, v\}$ and have \\ 
			$\mathcal{T} := (T^*_1, \dots, T^*_{\alpha-1}, T, T^*_{\alpha+1}, \dots, T^*_k, F)$.\\
	\item Let $K' = (V(K_1) \cup V(C_1), \; E(K_1) \cup E(C_1) + \{x, x'\})$.
	\item Let $K'' = (V(K_2) \cup V(C_2), \; E(K_2) \cup E(C_2) + \{y, y'\})$.
\end{itemize}
\end{notation}

\begin{figure}[htp]
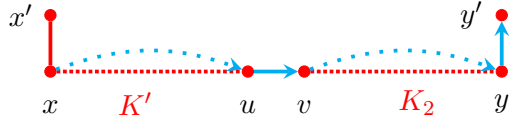

\center \caseThreePicAugm
\caption{The tree $T$ and the components $K', K_2$ of $F$.}
\label{fig:caseTwoThreeAugm}
\end{figure}

If we consider $\mathcal T$ with its red components $K'$ and $K_2$, we see that in some cases we obtain a smaller legal order if $e(K'), e(K_2) < e(K)$. However, exchanging an edge reorients a possibly non-trivial blue path, which could change the legal order: if there is a vertex $p$ on this path with $\iSig(p) < \iSig(K)$, then the red component of $p$ might lose a child and also might get a new child, both generated by edges that are incident to $p$. If this happens the legal order could increase at position $\iSig(p) + 1$. Fortunately, $\sigma^*$ still is intact for all indices that are less or equal than $\iSig(p)$, if we choose $p$ minimizing $\iSig$. We now first discuss briefly and informally how we receive a smaller legal order.

We first obtain $\mathcal T$ by the exchange described at the start of the section. This reorients $P^{T^*}(u, x)$. Let $p$ be a vertex on this path with $\iSig(p) = \iSig(P^{T^*}(u, x))$. As it can be seen in Figure \ref{fig:caseTwoThreeAugm}, there is a blue path from $p$ to $y$ now. Thus, an augmentation using a minimal special path with respect to $(y, y')$ goes back to a component $L$ with $\iSig(L) \leq \iSig(p)$. Thus, we get a smaller legal order. We now prove a technical lemma which will help us formalize the above (we use different notation in the next lemma to avoid confusion with the already defined notation in this section).

\begin{lemma}
\label{lemma:smallerByPath}
Let $L$ be a red component of $H_{\mathcal{T}^*}$ and let $C$ be a child of $L$ with respect to $\sigma^*$, which is generated by $(a, a') \in E(T^*)$, $T^* \in \{T^*_1, \dots, T^*_k\}$. Let $\mathcal S = (S_1, \dots, S_k, \Phi) \in \mathcal F$. Let $S \in \{S_1, \dots, S_k\}$, and suppose that $(a, a') \in E(S)$,  $\Phi + \{a, a'\}$ is a forest and $\rho(\Phi + \{a, a'\}) = \rho^*$.  Let $p \in V(R^*_{i_p})$ with $i_p \in \{1, \dots, t^*\}$ and suppose a legal order $\sigma = (R_1, \dots, R_t)$ exists for $\mathcal S$ with $R_j = R^*_j$ for all $j < i_p$ and $e(R_{i_p}) \leq e(R^*_{i_p})$. Furthermore, suppose $a \in V(R_{i_a})$ for an $i_a \geq i_p$ (in particular, $\iSig(L) \geq i_p$). Then there is no path $P^S(p, a)$.
\end{lemma}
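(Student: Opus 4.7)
The plan is to argue by contradiction: suppose that $P^S(p, a) = [p = u_0, u_1, \ldots, u_n = a]$ exists in $S$, and extend it by the blue edge $(a, a') \in E(S)$ to obtain the blue directed path $Q := P^S(p, a) \oplus [a, a']$ from $p$ to $a'$. The strategy is to use $Q$ to produce a minimal special path with last edge $(a, a')$ amenable to Lemma~\ref{lemma:specialPaths}, perform the corresponding augmentation, and thereby construct a decomposition $\mathcal{T}' \in \mathcal F^*$ whose legal order is strictly smaller than $\sigma^*$, contradicting the minimality of $\sigma^*$.

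First I would verify the prerequisites of Lemma~\ref{lemma:goodMinPath} applied to the blue edge $(a, a')$ and the legal order $\sigma$ of $\mathcal S$. Since $\Phi + \{a, a'\}$ is a forest, the vertices $a$ and $a'$ lie in distinct red components of $\Phi$, so $i_\sigma(a) \neq i_\sigma(a')$. I would then argue that in fact $a' \in \sigGreater{a}$: if $i_\sigma(a') < i_\sigma(a)$, the blue edge $(a, a')$ could be used to reorder $\sigma$ so as to place the red component of $a$ earlier, producing a smaller legal order and already yielding the contradiction. With $a' \in \sigGreater{a}$ in place, Lemma~\ref{lemma:goodMinPath} produces a minimal special path $P = [v_0, v_1, \ldots, v_l]$ with $(v_{l-1}, v_l) = (a, a')$ whose structure is compatible with Lemma~\ref{lemma:specialPaths}. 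Since $Q$ (or a suitable sub-path of $Q$ starting at $p$) is itself a special path with respect to $\sigma$ and $(a, a')$, the minimality of $P$ forces $i_\sigma(v_0) \leq i_\sigma(p) = i_p$, using that the intended choice of $p$ places $p$ in $V(R_{i_p})$.

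Having a minimal special path in hand, I would apply Lemma~\ref{lemma:specialPaths}. Together with the hypothesis $\rho(\Phi + \{a, a'\}) = \rho^*$, Lemma~\ref{lemma:R0R-1} ensures that the resulting decomposition $\mathcal T'$ lies in $\mathcal F^*$ and that $i_0 := i_\sigma(v_0) > 1$. Item~(5) of Lemma~\ref{lemma:specialPaths} then yields a legal order $\sigma'$ for $\mathcal T'$ with $R'_j = R_j$ for all $j < i_0$ and $e(R'_{i_0}) < e(R_{i_0})$. To complete the contradiction, I compare $\sigma'$ with $\sigma^*$ lexicographically: the hypothesis $R_j = R^*_j$ for $j < i_p$ combined with $i_0 \leq i_p$ forces $\sigma'$ and $\sigma^*$ to agree at positions $1, \ldots, i_0 - 1$, and at position $i_0$, $\sigma'$ has strictly smaller entry than $\sigma^*$ (invoking $e(R_{i_p}) \leq e(R^*_{i_p})$ when $i_0 = i_p$). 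Thus $\sigma' < \sigma^*$, contradicting the minimality of $\sigma^*$ in $\mathcal F^*$.

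The main obstacles are establishing (i) that $a' \in \sigGreater{a}$, so that Lemma~\ref{lemma:goodMinPath} is applicable to $(a, a')$, and (ii) that the minimal special path's start $v_0$ satisfies $i_\sigma(v_0) \leq i_p$, which is crucial for the final lexicographic comparison with $\sigma^*$. Both rely on the interaction of $\sigma$ with $\sigma^*$ enforced by the agreement $R_j = R^*_j$ for $j < i_p$ and on the fact that, in the intended applications of this lemma, $p$ is chosen to minimize $i_\sigma$ along the relevant reoriented blue path so that $i_\sigma(p) = i_p$.
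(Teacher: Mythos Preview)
Your overall strategy matches the paper's: assume $P^S(p,a)$ exists, locate a minimal special path for $(a,a')$ with $i_0 := i_\sigma(v_0) \leq i_p$, apply Lemmas~\ref{lemma:specialPaths} and~\ref{lemma:R0R-1}, and deduce a legal order $\sigma' < \sigma^*$ for a decomposition in $\mathcal F^*$. The bound $i_0 \leq i_p$ via $P^S(p,a)$ and the final lexicographic comparison with $\sigma^*$ are handled as in the paper.

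The genuine gap is your argument for $a' \in \sigma(>a)$. The reordering idea does not work: the edge $(a,a')$ is directed from $a$ towards $a'$, so if $i_\sigma(a') < i_\sigma(a)$ it is not of the shape needed to move the component of $a$ earlier in a legal order; and even if $\mathcal S$ admitted a smaller legal order, nothing in the hypotheses says $\mathcal S \in \mathcal F^*$, so this would not contradict the minimality of $\sigma^*$. The paper does not try to prove $a' \in \sigma(>a)$ unconditionally but instead splits into cases. If $i_a > i_0$ then $a = v_{l-1} \notin V(R_{i_0})$, so the precondition of Lemma~\ref{lemma:specialPaths} (``$v_{l-1}$ is not in the component of $R_{i_0}-v_0$ containing $v_{-1}$'') is vacuously satisfied, and $a' \in \sigma(>a)$ is never needed. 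Otherwise $i_a = i_0 = i_p$, and now the hypotheses about $\sigma^*$ do the real work: since $a' \in V(C)$ and $C$ is a child of $L$ in $\sigma^*$, one has $i_{\sigma^*}(a') > i_{\sigma^*}(L) \geq i_p$, hence $a' \notin R^*_j = R_j$ for $j < i_p$; combined with $a' \notin R_{i_a}$ (because $\Phi + \{a,a'\}$ is a forest) this gives $i_\sigma(a') > i_a$, so Lemma~\ref{lemma:goodMinPath} applies. Your closing remarks gesture at using the agreement $R_j = R^*_j$, but the missing ingredient is precisely the hypothesis that $a'$ lies in a \emph{child} of $L$ with respect to $\sigma^*$, which you never invoke.
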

\vspace{1mm}
\begin{beweis}
Assume $P^S(p, a)$ exists. Then $a \in V(H_{\mathcal S})$ and for an arbitrary minimal special path $[v_0, \dots, v_l]$ with respect to $\sigma$ and $(a, a')$ with $i_0 := i_\sigma(v_0)$ we have that $i_0 \leq i_p$ because of $P^S(p, a)$. We now will argue that we can apply Lemma \ref{lemma:specialPaths}.\\
If $i_a > i_0$, then $a \notin V(R_{i_0})$ and we can apply Lemma~\ref{lemma:specialPaths}. Otherwise we have $i_a = i_0 = i_p$. Since $\Phi + \{a, a'\}$ is a forest, we have $a' \notin V(R_{i_a})$. As $(R_{1},\ldots,R_{t})$ is the same as $\sigma^{*}$ until $i_{p}$, and $i_{a} = i_{p}$, and $a'$ lies in a child of $L$, it follows that $a' \in \sigma(>\!a)$. Thus by Lemma~\ref{lemma:goodMinPath}, we can again apply Lemma \ref{lemma:specialPaths}.

Since $\rho(\Phi + \{a, a'\}) = \rho^*$ and Lemma \ref{lemma:R0R-1} it follows from the fifth point of Lemma \ref{lemma:specialPaths} that $i_0 > 1$ holds and a legal order $\sigma' = (R'_1, \dots, R'_{t'})$ exists for a partition $(S'_1, \dots, S'_k, \Phi') \in \mathcal F^*$ with $R'_j = R_j = R^*_j$ for all $j < i_0 \leq i_p$ and $e(R'_{i_0}) < e(R_{i_0}) \leq e(R^*_{i_0})$. Thus, $\sigma' < \sigma^*$, which contradicts the minimality of $\sigma^*$.
\end{beweis}

Now we can determine in which cases two small children of a component generated by the same tree can occur:

\begin{lemma}	\label{satz:caseTwoThree}
Let $K$ be a red component of $\explSG$ and let $C_1, C_2$ be two distinct small children of $K$ with respect to $\sigma^*$ that are generated by the edges $(x, x')$ and $(y, y')$ of the tree $T^* \in \{T^*_1, \dots, T^*_k\}$, respectively. \\
Let $\{u, v\} \in E(P_{F^*}(x, y))$ and let $K_1$ be the component of $K - \{u, v\}$ that contains $x$, and let $K_2$ be the other component that contains $y$. \\
If $x \caseTwo y$ or $x \caseThree y$ with edge $(u, v)$, then one of the following cases holds:
\begin{enumerate}
	\item $e(K_1) = 0, e(C_2) = 1$,
	\item $e(K_2) = 0, e(C_1) = 1$,
\end{enumerate}

\end{lemma}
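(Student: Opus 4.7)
The plan is to argue by contradiction: assume both conclusions fail, and derive a contradiction by producing either a decomposition in $\mathcal F$ with residue strictly smaller than $\rho^*$, or a legal order for some decomposition in $\mathcal F^*$ strictly smaller than $\sigma^*$. Because $C_1, C_2$ are small we have $e(C_1), e(C_2) \leq 1$, and the negation of the disjunction in the conclusion rearranges exactly to the twin bounds $e(K_1) \geq e(C_2)$ and $e(K_2) \geq e(C_1)$. A one-line calculation then yields
\[
  e(K') = e(K_1) + e(C_1) + 1 \leq e(K), \qquad e(K'') = e(K_2) + e(C_2) + 1 \leq e(K).
\]
Passing from $F^*$ to $F + \{y,y'\}$ replaces $K, C_1, C_2$ by $K', K''$, and since $C_1, C_2$ are small (hence invisible to $\rho$), a short case analysis using $k \geq 1$ and the size bounds on small components shows that $K', K''$ cannot conspire to raise $\rho$ at any position, so $\rho(F + \{y,y'\}) \leq \rho^*$ in lex order.

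I would then split according to whether this bound is strict. If $\rho(F + \{y,y'\}) < \rho^*$, I choose a legal order for $\mathcal T$ placing $K_2$ before $C_2$ (so that $y' \in \sigma(>\!y)$), apply Lemma~\ref{lemma:goodMinPath} to $(y,y')$ to extract a minimal special path satisfying the hypothesis of Lemma~\ref{lemma:specialPaths}, and observe that the resulting $\mathcal T' \in \mathcal F$ has $\rho(F') \leq \rho(F + \{y,y'\}) < \rho^*$, contradicting the minimality of $\rho^*$. If instead $\rho(F + \{y,y'\}) = \rho^*$, I invoke Lemma~\ref{lemma:smallerByPath} with $\mathcal S = \mathcal T$, $L = K$, $C = C_2$, $(a,a') = (y,y')$. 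The choice of $p$ and $\sigma$ splits according to whether $i_p := \iSig(P^{T^*}(u,x)) < \iSig(K) =: i_K^*$. In the first sub-case I take $p$ on the reoriented blue path minimising $\iSig$, build a legal order for $\mathcal T$ agreeing with $\sigma^*$ up to index $i_p$ (the key point being that the deleted blue edge $(x,x')$ generates $C_1$ at an index strictly larger than $i_K^*$, so earlier generating edges are untouched), and exhibit the blue $T$-path $\rP^{T^*}(u,p) \oplus [u,v] \oplus P^{T^*}(v,y)$. In the second sub-case I set $R_{i_K^*}$ to be either $K_2$, taking $p = y$ with the trivial path, or $K'$, taking $p = u$ with the path $[u,v] \oplus P^{T^*}(v,y)$. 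In every branch Lemma~\ref{lemma:smallerByPath} forbids the exhibited $T$-path, giving the desired contradiction.

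The main obstacle is the clean verification of $\rho(F + \{y,y'\}) \leq \rho^*$: one must rule out scenarios in which $K'$ or $K''$ inherits the size $e(K)$ while the other contributes at a position $> d$ strictly below $e(K)$. Such scenarios force $e(C_i) = e(K_{3-i}) \leq 1$, hence all of $e(K), e(K'), e(K'') \leq 3$, and then the constraint $k \geq 1$ together with the regime either $d \leq k+1$ or $k+1 < d$ forces either $e(K) \leq d$ (so nothing contributes to $\rho$ at all) or the relevant sizes to lie below $d$ as well. A secondary obstacle is confirming in the first sub-case of the $\rho = \rho^*$ analysis that the legal order for $\mathcal T$ really agrees with $\sigma^*$ up to index $i_p$; this relies on the edge-disjointness $E(P^{T^*}(u, x')) \cap E(P^{T^*}(v, y')) = \varnothing$ provided by Lemma~\ref{sd:cases}, which ensures both that the path $P^{T^*}(v,y)$ survives the reorientation and that the blue ancestry of vertices in red components of index $< i_K^*$ is unaffected by the exchange $(x,x') \leftrightarrow \{u,v\}$.
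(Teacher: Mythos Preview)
Your proposal is correct and follows essentially the same route as the paper: negate to obtain $e(K_1)\ge e(C_2)$ and $e(K_2)\ge e(C_1)$, perform the exchange $(x,x')\leftrightarrow\{u,v\}$, and then contradict Lemma~\ref{lemma:smallerByPath} via the identical two sub-cases on $\iSig(P^{T^*}(u,x))$ versus $i_K$, with the same choices of $p$ and the same blue path $\rP^{T^*}(u,p)\oplus[u,v]\oplus P^{T^*}(v,y)$. Your explicit split into $\rho(F+\{y,y'\})<\rho^*$ versus $=\rho^*$ is in fact more careful than the paper, which simply asserts equality; two minor points to tidy up are that you should invoke Lemma~\ref{lemma:rootNoChildren} up front to secure $K\neq R^*$ (needed for $\mathcal T\in\mathcal F$), and in your strict-inequality branch the claim $\mathcal T'\in\mathcal F$ may fail if the special path reaches $R^*$, though the contradiction $\rho(F')<\rho^*$ survives since $\rho^*$ is the global minimum over all spanning-trees-plus-forest decompositions.
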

\begin{beweis}
Assume to the contrary that both $e(K_1) \geq e(C_2)$ and $e(K_2) \geq e(C_1)$. By Lemma \ref{lemma:rootNoChildren} we have that $K \neq R^*$. We start with showing that adding $\{y, y'\}$ to $F$ does not increase the residue function. We have:
\[e(K') = e(K_1) + e(C_1) + 1 = e(K) - e(K_2) + e(C_1) \leq e(K)\]
and analogously, $e(K'') \leq e(K).$ Thus, we have $\rho(F + \{y, y'\}) = \rho^*$.

First assume that $i_p := \iSig(P^{T^*}(u, x)) < \iSig(K) =: i_K$. The path $P^{T^*}(u, x)$ is the path that is reoriented in $T$. Let $p \in V(P^{T^*}(u, x))$ with $\iSig(p) = i_p$. Then there is a legal order $\sigma = (R_1, \dots, R_t)$ for $\mathcal{T}$ with $R_j = R^*_j$ for all $j \leq i_p$.
Because of the path
$P^T(p, y) = \rP^{T^*}(u, p) \oplus [u, v] \oplus P^{T^*}(v, y)$
Lemma~\ref{lemma:smallerByPath} provides a contradiction.\\

Therefore, one has $\iSig(P^{T^*}(u, x)) \geq \iSig(K) = i_K$. Then there is a legal order $\sigma = (R_1, \dots, R_t)$ for $\mathcal{T}$ with $R_ j= R^*_j$ for all $j < i_K$ and $R_{i_K} \in \{K', K_2\}$. Hence $e(R_{i_K}) \leq e(R^*_{i_K})$ and $i_\sigma(y) \geq i_K$. \\
Let $p := y$ if $R_{i_K} = K_2$, and $p := u$ if $R_{i_K} = K'$. Then the path 
\[P^T(p, y) = 
	\begin{cases}
		[y] \text{, if } R_{i_K} = K_2 \\
		 [u, v] \oplus P^{T^*}(v, y) \text{, if } R_{i_K} = K'
	\end{cases}
\]
exists and we contradict Lemma~\ref{lemma:smallerByPath}.
\end{beweis}

\section{Bounding small children - The case \boldmath\texorpdfstring{$x \caseOneUndir y$}{x -1- y}}
We now turn to the case $x \caseOneUndir y$ and proceed similarly to the previous section. 
The results will be summarized in Lemma~\ref{satz:caseOne}.

Let $K$ be a red component of $\explSG$ and let $C_1, C_2$ be distinct small children of $K$ with respect to $\sigma^*$ that are generated by the edges $(x, x')$ and $(y, y')$ of the tree $T^* \in \{T^*_1, \dots, T^*_k\}$, respectively. Thus $K \neq R^*$ by Lemma \ref{lemma:rootNoChildren}. Furthermore, suppose that $x \caseOneUndir y$ for $(T^*, F^*)$ with the edges $(u_x, u_y), (v_x, v_y)$ and (without loss of generality)  $\iSig(P^{T^*}(u_x, x)) \geq \iSig(P^{T^*}(v_y, y))$. 

\begin{notation} \phantom{break line}
\begin{itemize}	
	\item Let  $K_1$ be the component of  $K - \{u_x, u_y\}$ that contains $x$.
	\item Let $K_2$  be the component of $K - \{v_x, v_y\}$ that contains $y$.
	\item Let $K_3$  be the component of $\big(K - \{u_x, u_y\}\big) - \{v_x, v_y\}$ that contains neither $x$ nor $y$.
	\item Let $i_K := \iSig(K)$.
\end{itemize}
\end{notation}
Thus $e(K) = e(K_1) + e(K_2) + e(K_3) + 2$.

\begin{lemma}		\label{lemma:5}
If $i_K \leq \iSig(P^{T^*}(v_y, y))$, then $e(K_1) = 0$ and $e(C_2) = 1$.
\end{lemma}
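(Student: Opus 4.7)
The plan is to adapt the strategy of Lemma \ref{satz:caseTwoThree} to the Case 1 setting. Assume for contradiction that not both $e(K_1) = 0$ and $e(C_2) = 1$ hold; that is, either $e(K_1) \geq 1$ or $e(C_2) = 0$. I perform the exchange $(x, x') \leftrightarrow \{u_x, u_y\}$, which is valid because by Case 1 of Lemma \ref{sd:cases}, $u_x$ is a descendant of $x$ in $T^*$ while $u_y$ is not. This yields a new decomposition $\mathcal S = (T^*_1, \dots, S, \dots, T^*_k, \Phi)$ with $\Phi = F^* - \{u_x, u_y\} + \{x, x'\}$ and $S$ the modified tree, in which the path $P^{T^*}(u_x, x)$ is reoriented.

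I first verify that $\rho(\Phi + \{y, y'\}) = \rho^*$. In $\Phi$, the component $K$ is split into $K' = K_1 + C_1 + \{x, x'\}$ (of size $e(K_1) + e(C_1) + 1$) and $K_2 + K_3 + \{v_x, v_y\}$ (of size $e(K_2) + e(K_3) + 1$); adding $\{y, y'\}$ merges $C_2$ with the second piece into a component of size $e(K_2) + e(K_3) + e(C_2) + 2$. The bound $e(K') \leq e(K)$ follows from $e(C_1) \leq 1 \leq e(K_2) + e(K_3) + 1$. The contradiction assumption gives $e(C_2) \leq e(K_1)$---if $e(K_1) \geq 1$, then $e(C_2) \leq 1 \leq e(K_1)$; if $e(C_2) = 0$, trivially---so the merged component also has size at most $e(K)$. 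A comparison of tracked components then yields $\rho(\Phi + \{y, y'\}) = \rho^*$, with any strict inequality directly contradicting the minimality of $\rho^*$ through Lemmas \ref{lemma:specialPaths} and \ref{lemma:R0R-1}. The WLOG inequality combined with the hypothesis gives $i_K \leq \iSig(P^{T^*}(u_x, x))$, so the reorientation only affects vertices of index $\geq i_K$; hence a legal order $\sigma = (R_1, \dots, R_t)$ for $\mathcal S$ exists with $R_j = R^*_j$ for all $j < i_K$, and $R_{i_K}$ is one of the two pieces of $K$ (each of size $\leq e(R^*_{i_K})$).

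Finally, I apply Lemma \ref{lemma:smallerByPath} with $L = K$, $C = C_2$, $(a, a') = (y, y')$, the order $\sigma$, and $p = v_y \in V(R^*_{i_K})$ with $i_p = i_K$. Since $v_y$ is a descendant of $y$ in $T^*$ (by Case 1) but not a descendant of $x$, it does not lie on the reoriented path $P^{T^*}(u_x, x)$; hence $P^S(v_y, y) = P^{T^*}(v_y, y)$ exists in $S$, contradicting the conclusion of Lemma \ref{lemma:smallerByPath}. The main technical obstacle is the residue bookkeeping in the first verification step, which closely mirrors the analysis carried out in Lemma \ref{satz:caseTwoThree}.
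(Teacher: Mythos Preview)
Your argument is correct, but it takes a more elaborate route than the paper's. The paper performs the \emph{other} exchange, namely $(y, y') \leftrightarrow \{v_x, v_y\}$, which reorients $P^{T^*}(v_y, y)$. The hypothesis $i_K \leq \iSig(P^{T^*}(v_y, y))$ then says precisely that this reorientation does not disturb the legal order below index $i_K$. The two resulting red components are $K_1 \cup K_3 + \{u_x, u_y\}$ of size $e(K) - e(K_2) - 1$ and $K_2 \cup C_2 + \{y, y'\}$ of size $e(K_2) + e(C_2) + 1 \leq e(K) - e(K_3) - 1$; both are \emph{strictly} smaller than $e(K)$ because the middle piece $K_3$ drops out of each count. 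Hence $\rho$ is preserved and one obtains $\sigma < \sigma^*$ immediately, with no appeal to special paths or to Lemma~\ref{lemma:smallerByPath}.

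Your approach instead performs the $(x,x')$ exchange and then invokes Lemma~\ref{lemma:smallerByPath} via the surviving blue path $P^{T^*}(v_y, y)$. This is exactly the machinery the paper deploys in the \emph{next} lemma (Lemma~\ref{satz:caseOne}), which handles the general case where the hypothesis of Lemma~\ref{lemma:5} fails; you have effectively shown that this heavier argument already covers the easy case as well. What the paper's choice buys is that in the special situation $i_K \leq \iSig(P^{T^*}(v_y, y))$ one gets strict inequalities for free and can skip the special-path bookkeeping entirely, while your choice only yields $e(K') \leq e(K)$ and forces the extra step. One small point you gloss over: the claim $\rho(\Phi + \{y,y'\}) = \rho^*$ needs the observation that $e(K')$ and the merged component cannot \emph{both} equal $e(K)$ unless $e(K) \leq 3 \leq d$, in which case neither is tracked by $\rho$; this is true but worth a sentence.
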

\vspace{1mm}
\begin{beweis}
Assume to the contrary that $e(K_1) \geq e(C_2)$.
We obtain $T, F$ by performing the exchange $(y, y') \leftrightarrow \{v_x, v_y\}$. Let $\mathcal{T} := (T^*_1, \dots, T^*_{\alpha-1}, T, T^*_{\alpha+1}, \dots, T^*_k, F)$.\\
In $F$ there are the components
$K' = (V(K_1) \cup V(K_3), \: E(K_1) \cup E(K_3) + \{u_x, u_y\})$ and 
$K'' = (V(K_2) \cup V(C_2), E(K_2) \cup E(C_2) + \{y, y'\})$, for which we have $e(K') < e(K)$
and
\begin{align*}
	e(K'') &= e(K_2) + e(C_2) + 1 \\
	&= e(K) - e(K_1) - e(K_3) + e(C_2) - 1\\
	&\leq e(K) - e(K_3) - 1 \\
	&< e(K).
\end{align*}

\begin{figure}[h]
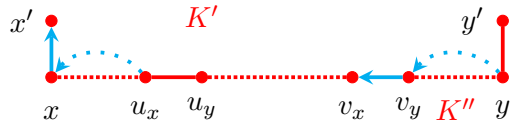

\center
\caseOnePicAugmFalse
\caption{$T$ and the components $K', K''$ of $F$ in Lemma~\ref{lemma:5}.}
\end{figure}

Thus $\rho(F) = \rho(F^*)$. 
Since $K \neq R^*$ and $i_K \leq \iSig(P^{T^*}(v_y, y))$ there is a legal order $\sigma = (R_1, \dots, R_t)$ for $\mathcal{T} \in \mathcal F^*$ with the property $R_j = R^*_j$ for all $j < i_K$ and $R_{i_K} \in \{K', K''\}$ and therewith $e(R_{i_K}) < e(R^*_{i_K})$. Thus, $\sigma < \sigma^*$, which is a contradiction.
\end{beweis}

Now we show the above lemma in a general setting, where the reorientation of a path when performing an exchange for $(x, x')$ or $(y, y')$ could change the legal order at an index $<i_K$. This will summarize the progress of this section.

\begin{lemma}	\label{satz:caseOne}
Let $K$ be a red component of  $\explSG$ and let $C_1, C_2$ be two distinct small children of $K$ with respect to $\sigma^*$, which are generated by the edges $(x, x')$ and $(y, y')$ of the tree $T^* \in \{T^*_1, \dots, T^*_k\}$, respectively. Furthermore, suppose that $x \caseOneUndir y$ for $(T^*, F^*)$ with the edges $(u_x, u_y), (v_x, v_y)$ and (without loss of generality) $\iSig(P^{T^*}(u_x, x)) \geq \iSig(P^{T^*}(v_y, y))$. Let $K_1$ be the component of $K - \{u_x, u_y\}$ that contains $x$.  \\
Then $e(K_1) = 0$ and $e(C_2) = 1$.
\end{lemma}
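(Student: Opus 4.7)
The plan is to split on whether $i_K \leq \iSig(P^{T^*}(v_y, y))$ or not; the first case is handled directly by Lemma~\ref{lemma:5}, so assume $i_p := \iSig(P^{T^*}(v_y, y)) < i_K$ and, for contradiction, $e(K_1) \geq e(C_2)$. I would perform the exchange $(x, x') \leftrightarrow \{u_x, u_y\}$ (legitimate since $x \caseOneUndir y$ gives $(x, x') \leftrightarrow \{u_x, u_y\}$ after relabeling, if necessary) to obtain a decomposition $\mathcal{T} = (T^*_1, \dots, T, \dots, T^*_k, F) \in \mathcal{F}$; the component $R^*$ is undisturbed because $K \neq R^*$ by Lemma~\ref{lemma:rootNoChildren}.

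Next I would track what becomes of $K$: in $F$ it splits along $\{u_x, u_y\}$ into $K' = K_1 \cup C_1 + \{x, x'\}$ (which absorbs $C_1$ through the former blue edge) and $K^{\#} := K_2 \cup K_3 + \{v_x, v_y\}$ (containing $y, v_x, v_y$). Since $y$ and $y'$ now live in different components of $F$, adding $\{y, y'\}$ gives a forest with components $K'$ and $K'' := K^{\#} \cup C_2 + \{y, y'\}$. A direct count gives
\[
e(K') = e(K_1) + e(C_1) + 1 \leq e(K), \qquad e(K'') = e(K) - e(K_1) + e(C_2) \leq e(K),
\]
using $e(C_1) \leq 1$ and the contradictory assumption. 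If either inequality is strict then $\rho(F + \{y, y'\}) < \rho^*$, and applying Lemma~\ref{lemma:goodMinPath} followed by Lemma~\ref{lemma:specialPaths} to the blue edge $(y, y')$ in $\mathcal{T}$ yields a valid decomposition of strictly smaller residue, contradicting the minimality of $\rho^*$. Otherwise $\rho(F + \{y, y'\}) = \rho^*$.

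In the equality case I would invoke Lemma~\ref{lemma:smallerByPath} with $L = K$, $C = C_2$, $(a, a') = (y, y')$, $\mathcal{S} = \mathcal{T}$, and $p \in V(P^{T^*}(v_y, y))$ chosen with $\iSig(p) = i_p$. Since the exchange only reorients the path $P^{T^*}(u_x, x)$, whose vertices all satisfy $\iSig \geq \iSig(P^{T^*}(u_x, x)) \geq i_p$ by the hypothesis of the lemma, and the only red change is inside $K$ at index $i_K > i_p$, the sequence $R^*_1, \dots, R^*_{i_p}$ is unchanged in $F$ and extends to a legal order for $\mathcal{T}$ that agrees with $\sigma^*$ through index $i_p$. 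The edge-disjointness $E(P^{T^*}(u_x, x')) \cap E(P^{T^*}(v_y, y')) = \varnothing$ from Case~1 of Lemma~\ref{sd:cases} further guarantees that $P^{T^*}(v_y, y)$ is undisturbed by the exchange, so $P^T(p, y) \subseteq P^{T^*}(v_y, y)$ exists in $T$. Lemma~\ref{lemma:smallerByPath} then forces a contradiction; hence $e(K_1) < e(C_2)$, and since $C_2$ is small we have $e(C_2) \leq 1$, which forces $e(C_2) = 1$ and $e(K_1) = 0$.

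The main obstacle I anticipate is verifying the preconditions of Lemma~\ref{lemma:smallerByPath} carefully: one must simultaneously combine the hypothesis $\iSig(P^{T^*}(u_x, x)) \geq \iSig(P^{T^*}(v_y, y))$ with the edge-disjointness of Case~1 to both maintain the legal order up to index $i_p$ and preserve the blue path from $p$ to $y$. Bookkeeping on which edges are reoriented, deleted, or preserved is the single place where the argument is most delicate and error-prone, and it is exactly where the two hypotheses of the lemma get used together.
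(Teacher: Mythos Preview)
Your approach is essentially the paper's: invoke Lemma~\ref{lemma:5} for the case $i_K \le i_p$, otherwise perform the exchange $(x,x') \leftrightarrow \{u_x,u_y\}$, bound $e(K'), e(K'') \le e(K)$, build a legal order agreeing with $\sigma^*$ through index $i_p$, and contradict Lemma~\ref{lemma:smallerByPath} via the surviving path $P^{T}(p,y) \subseteq P^{T^*}(v_y,y')$.

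The one blemish is your intermediate dichotomy on $\rho$. The implication ``either inequality strict $\Rightarrow \rho(F+\{y,y'\}) < \rho^*$'' is false: if $e(K) \le d$ then none of $K, K', K''$ contribute to $\rho$ at all, so $\rho(F+\{y,y'\}) = \rho^*$ regardless of strictness. Moreover, in your ``$\rho < \rho^*$'' branch you invoke Lemma~\ref{lemma:goodMinPath} without first exhibiting a legal order for $\mathcal T$ with $y' \in \sigma(>y)$, which that lemma needs. The fix is simply to drop the split: the bounds $e(K'), e(K'') \le e(K)$ give $\rho(F+\{y,y'\}) \le \rho^*$, and your ``equality case'' argument (which is exactly the paper's) goes through uniformly --- Lemma~\ref{lemma:smallerByPath} only uses $\rho = \rho^*$ to land in $\mathcal F^*$, and if instead $\rho < \rho^*$ the contradiction to the global minimality of $\rho^*$ is immediate.
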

\begin{beweis}

Assume that $e(K_1) \geq e(C_2)$. Lemma~\ref{lemma:5} implies that
$i_p := \iSig(P^{T^*}(v_y, y)) < i_K$. 
This time we obtain $T, F$ by performing the exchange
 $(x, x') \leftrightarrow \{u_x, u_y\}$. 
Let $\mathcal{T} := (T^*_1, \dots, T^*_{\alpha-1}, T, T^*_{\alpha+1}, \dots, T^*_k, F)$ and we have $\mathcal{T} \in \mathcal F$.

\begin{figure}[h]
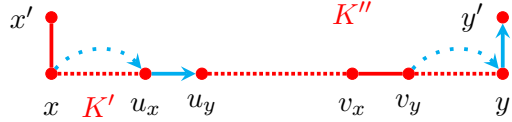

\center
\caseOnePicAugmTrue
\caption{$T$ and the components $K', K''$ of $F$ in Lemma \ref{satz:caseOne}.}
\end{figure}

We see that $F + \{y, y'\}$ has the components
\[K' = (V(K_1) \cup V(C_1), \: E(K_1) \cup E(C_1) + \{x, x'\})\]
and
\[K'' = (V(K_2) \cup V(K_3) \cup V(C_2), \: E(K_2) \cup E(K_3) \cup E(C_2) + \{v_x, v_y\} + \{y, y'\}),\]
for which
\begin{align*}
e(K') &= e(K_1) + e(C_1) + 1 \\
	&= e(K) - e(K_2) - e(K_3) + e(C_1) - 1 \\
	&\leq e(K)
\end{align*}
and
\begin{align*}
e(K'') &= e(K_2) + e(K_3) + e(C_2) + 2 \\
	&= e(K) - e(K_1) + e(C_2) \\
	&\leq e(K).
\end{align*}
Thus $\rho(F + \{y, y'\}) = \rho^*$. Since $i_p \leq \min\{ i_K, \iSig(P^{T^*}(u_x, x)) \}$ there is a legal order $\sigma = (R_1, \dots, R_t)$ for $\mathcal{T}$ with the property $R_j = R^*_j$ for all $j \leq i_p$. \\
Let $p \in V(P^{T^*}(v_y, y))$ with $\iSig(p) = i_p$.
Since $i_p < i_K$ we have $y \in \sigma(>p)$.
Since $E(P^{T^*}(u_x, x')) \cap E(P^{T^*}(v_y, y')) = \varnothing$ there exists a path $P^T(v_y, y') = P^{T^*}(v_y, y')$ and by the path
$P^T(p, y) \subseteq P^T(v_y, y')$ and Lemma~\ref{lemma:smallerByPath} we arrive at a contradiction.
\end{beweis}

\section{Proof of the Conjecture for \boldmath\texorpdfstring{$d \leq k + 1$}{d <= k+1}}
We can now, by Lemmas \ref{satz:caseTwoThree} and \ref{satz:caseOne}, bound the number of small children per component in the case $d \leq k + 1$. In this way, we find the contradiction that the density of $\explSG$ is too high.

\begin{kor}		\label{kor:oneZeroChild}
Let $K$ be a red component of  $\explSG$. Then for every $i \in \{1, \dots, k\}$, 
the number of children $C$ of $K$ with respect to $\sigma^*$ which are isolated vertices and which are generated by $T^*_i$ is at most 1.
\end{kor}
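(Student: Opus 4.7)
The plan is to argue by contradiction and reduce directly to the structural lemmas of the previous two sections. Suppose for contradiction that $K$ has two distinct isolated-vertex children $C_1, C_2$ with respect to $\sigma^*$, generated by edges $(x, x'), (y, y') \in E(T^*_i)$. Since $d \leq k+1$, the definition of ``small'' says that a red component is small exactly when it has $0$ edges, so $C_1, C_2$ are in particular small children of $K$ in the sense required by the results of Sections~5 and 6, with $e(C_1) = e(C_2) = 0$.

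Next I would apply Lemma~\ref{sd:cases} to the edges $(x, x')$ and $(y, y')$ inside $T^*_i$. After swapping the roles of $(x, C_1)$ and $(y, C_2)$ if necessary, we may assume $y$ is not a descendant of $x$ in $T^*_i$, so that exactly one of $x \caseOne y$, $x \caseTwo y$, or $x \caseThree y$ holds for $(T^*_i, F^*)$ with some edge $(u, v) \in E(P_{F^*}(x, y))$.

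In the cases $x \caseTwo y$ or $x \caseThree y$, Lemma~\ref{satz:caseTwoThree} forces either $e(C_1) = 1$ or $e(C_2) = 1$, which contradicts $e(C_1) = e(C_2) = 0$. In the remaining case we have $x \caseOneUndir y$; after relabeling $x, y$ if necessary so that the asymmetry condition $\iSig(P^{T^*_i}(u_x, x)) \geq \iSig(P^{T^*_i}(v_y, y))$ of Lemma~\ref{satz:caseOne} holds, that lemma gives $e(C_2) = 1$, again a contradiction.

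The heavy lifting has already been carried out in Sections~5 and 6, so this corollary is essentially immediate; the only conceptual point to verify is the trivial observation that under $d \leq k+1$ being small coincides with being an isolated vertex, so that the hypotheses and conclusions of Lemmas~\ref{satz:caseTwoThree} and~\ref{satz:caseOne} line up exactly with what the corollary asserts. There is no genuine obstacle beyond keeping the labels $x, y, C_1, C_2$ straight when invoking the two ``WLOG'' reductions.
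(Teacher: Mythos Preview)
Your argument is correct and follows essentially the same route as the paper: assume two isolated-vertex children generated by the same tree, note they are small, invoke Lemma~\ref{sd:cases} to land in one of the three cases, and then use Lemmas~\ref{satz:caseTwoThree} and~\ref{satz:caseOne} to force one of the children to have an edge, a contradiction. The paper's proof is simply a two-sentence compression of what you have written out.

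One small remark: you invoke the hypothesis $d \leq k+1$ to argue that ``small'' coincides with ``isolated vertex'', but this is unnecessary for the direction you actually need. A component with zero edges is small under \emph{both} regimes of the definition (the $d\leq k+1$ case and the $k+1<d<3(k+1)$ case), so the corollary and your proof of it hold without any restriction on $d$. This matters because the paper later reuses this corollary in Section~8 for the range $k+1<d<3(k+1)$.
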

\vspace{1mm}
\begin{beweis}
Note that $R^*$ does not have any small children by Lemma \ref{lemma:rootNoChildren}. Thus let $K \neq R^*$. If there are two distinct small children of $K$ generated by $T^*_i$, then by Lemma \ref{satz:caseOne} and \ref{satz:caseTwoThree} one of them must contain an edge and thus $d > k + 1$.
\end{beweis}

We are now ready to prove the Strong Nine Dragon Tree Conjecture when $d \leq k+1$. Recall  Corollary~\ref{kor:smallNoSmallChildren} that all small red components of $\explSG$ are children of non-small components. 

\begin{notation}
Denote the set of red components of  $\explSG$ that are not small by  $\mathcal{K}$. In an arbitrary fashion we assign each small components to exactly one of its parents in  $\mathcal{K}$. \\
Let  $K \in \mathcal{K}$ and $C_1, \dots, C_q$ the small children of $K$ that were assigned to  $K$ .\\
Then
$\mathcal{C}(K) := \{C_1, \dots, C_q\}$, $\mathcal{C}_l(K) = \{C \in \mathcal{C}(K) | e(C) = l\}$ and 
\[\KC := \big(V(K) \cup \bigcup_{C \in \mathcal{C}(K)} V(C), \;E(K) \cup \bigcup_{C \in \mathcal{C}(K)} E(C)\big).\]
\end{notation}

\begin{beob}
We have $V(\explSG) = \dot\bigcup_{K \in \mathcal{K}} V(\KC)$ and $E_r(\explSG) = \dot\bigcup_{K \in \mathcal{K}} E(\KC)$.
\end{beob}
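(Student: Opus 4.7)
The plan is to reduce the observation to two more primitive facts: first that the red components of $H_{\mathcal T^*}$ partition $V(H_{\mathcal T^*})$ and $E_r(H_{\mathcal T^*})$, and second that the assignment of each small red component to a unique non-small parent is well-defined. The first fact is immediate from the definition of a connected component of the red forest $(V(H_{\mathcal T^*}), E_r(H_{\mathcal T^*}))$: every vertex of $H_{\mathcal T^*}$ lies in exactly one red component, and every red edge lies in exactly one red component (since its endpoints are in the same red component).

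Second, I would verify the assignment is well-defined, i.e., every small component $C$ actually has at least one parent in $\mathcal K$ with respect to $\sigma^*$. Since $R^*$ has more than $d$ edges, it is not small, so $C \neq R^*$. By the definition of a legal order, every red component other than $R_1 = R^*$ has at least one parent with respect to $\sigma^*$. By Corollary~\ref{kor:smallNoSmallChildren}, no small component has a small child; taking the contrapositive, every parent of a small component is non-small, hence lies in $\mathcal K$. So the set of candidate parents for $C$ in $\mathcal K$ is nonempty, and the arbitrary assignment of $C$ to exactly one of them is well-defined.

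Combining the two observations, I would argue that
\[
V(H_{\mathcal T^*}) = \bigcup_{K \in \mathcal K} V(K) \;\cup\; \bigcup_{C \text{ small}} V(C),
\]
and this union is disjoint because it is the partition of $V(H_{\mathcal T^*})$ into red components. Grouping each small $C$ with the unique $K \in \mathcal K$ it is assigned to gives
\[
V(H_{\mathcal T^*}) = \dot\bigcup_{K \in \mathcal K}\Bigl(V(K) \cup \bigcup_{C \in \mathcal C(K)} V(C)\Bigr) = \dot\bigcup_{K \in \mathcal K} V(K_{\mathcal C}),
\]
and an entirely analogous argument for red edges (using that red edges lie within a single red component) yields $E_r(H_{\mathcal T^*}) = \dot\bigcup_{K \in \mathcal K} E(K_{\mathcal C})$.

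There is no real obstacle here; the statement is essentially a bookkeeping consequence of Corollary~\ref{kor:smallNoSmallChildren} plus the definition of a legal order. The only subtle point to be explicit about is why every small component actually has a candidate non-small parent available, which is exactly where $R^* \in \mathcal K$ and Corollary~\ref{kor:smallNoSmallChildren} get used.
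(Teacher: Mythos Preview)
Your proposal is correct. The paper in fact gives no proof at all for this observation, treating it as immediate from the definitions and the remark preceding the Notation that every small component is a child of a non-small one (Corollary~\ref{kor:smallNoSmallChildren}); your argument is precisely the routine verification the paper leaves implicit.
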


\begin{lemma}
If $d \leq k + 1$, then for every $K \in \mathcal{K}$:
\[\frac{e(\KC)}{v(\KC)} \geq \frac{d}{d+k+1}.\] 
In particular, Theorem \ref{satz:mySndtc} holds when $d \leq k+1$.
\end{lemma}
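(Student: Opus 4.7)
The plan is to turn $\frac{e(\KC)}{v(\KC)} \geq \frac{d}{d+k+1}$ into a purely arithmetic inequality via the structural facts already proved, and then accumulate the per-component bounds over $\mathcal{K}$ to contradict Observation~\ref{beob:densityExplSG}. The first step is to unpack $\KC$: since $d \leq k+1$, a small red component is an isolated vertex (zero edges, one vertex), so every $C \in \mathcal{C}(K)$ contributes exactly one vertex and no edges to $\KC$. Setting $q := |\mathcal{C}(K)|$ and using that the red component $K$ is a tree (so $v(K) = e(K) + 1$), the target inequality becomes
\[
    (k+1)\,e(K) \;\geq\; d\,(q+1).
\]

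Next I would bound the two sides. Corollary~\ref{kor:oneZeroChild} states that each spanning tree $T^*_i$ generates at most one isolated child of $K$, hence $q \leq k$. For $e(K)$, I split on whether $q = 0$: if so, non-smallness of $K$ gives $e(K) \geq 1$, and the assumption $d \leq k+1$ then yields $(k+1)\,e(K) \geq k+1 \geq d = (q+1)\,d$. If $q \geq 1$, I pick any $C \in \mathcal{C}(K)$; since $e(C) = 0$, Corollary~\ref{kor:sumOfChildRelation} forces $e(K) \geq d$, and combining with $q \leq k$ gives $(k+1)\,e(K) \geq (k+1)\,d \geq (q+1)\,d$. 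In either case the per-component inequality is established.

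For the ``In particular'' part, I would sum across $\mathcal{K}$. The observation just preceding the lemma decomposes $V(\explSG)$ and $E_r(\explSG)$ as disjoint unions over $\mathcal{K}$, so adding the per-component bounds gives $e_r(\explSG) \geq \frac{d}{d+k+1}\,v(\explSG)$. Since $R^*$ carries at least $d+1$ edges (take $d \geq 1$; the case $d=0$ reduces directly to Nash-Williams with an empty $(k{+}1)$st forest), we have $e_r(\explSG) > 0$, and therefore
\[
    \frac{e_r(\explSG)}{v(\explSG)-1} \;>\; \frac{e_r(\explSG)}{v(\explSG)} \;\geq\; \frac{d}{d+k+1} \;=\; \density,
\]
contradicting Observation~\ref{beob:densityExplSG}. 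I expect no real obstacle: all the heavy lifting has been absorbed into Corollaries~\ref{kor:oneZeroChild} and~\ref{kor:sumOfChildRelation}, and what remains is recognising that the rearranged inequality $(k+1)\,e(K) \geq (q+1)\,d$ matches exactly the bounds ``$q \leq k$'' and (when $q \geq 1$) ``$e(K) \geq d$'' delivered by those two corollaries, together with the slack provided by $d \leq k+1$ in the childless case.
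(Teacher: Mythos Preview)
Your proposal is correct and follows essentially the same approach as the paper: both split on whether $K$ has small children (equivalently, whether $e(K) \geq d$, via Corollary~\ref{kor:sumOfChildRelation}), use Corollary~\ref{kor:oneZeroChild} to bound the number of isolated-vertex children by $k$, and then sum over $\mathcal{K}$ to contradict Observation~\ref{beob:densityExplSG}. The only cosmetic difference is that you clear denominators to work with $(k+1)\,e(K) \geq (q+1)\,d$, whereas the paper keeps the fraction $\frac{e(K)}{e(K)+1+k} \geq \frac{d}{d+k+1}$ directly.
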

\vspace{1mm}
\begin{beweis}
If $e(K) < d$, then  by Corollary~\ref{kor:sumOfChildRelation} $K$ does not have small children and since $K$ is not small, $\frac{e(\KC)}{v(\KC)} \geq \frac{d}{d+k+1}$.  Now let $e(K) \geq d$. Then
\[
    \frac{e(\KC)}{v(\KC)}
    = \frac{e(K)}{e(K) + 1 + \sum\limits_{C \in \mathcal{C}(K)} v(C)}
    \overset{Cor.~\ref{kor:oneZeroChild}}{\geq} \frac{e(K)}{e(K) + 1 + k}
    \geq \frac{d}{d+k+1}
\]
and we get
\[
	\frac{e_r(\explSG)}{v(\explSG) - 1}
		> \frac{e_r(\explSG)}{v(\explSG)}
		= \frac{\sum_{K \in \mathcal{K}} e(\KC)}{\sum_{K \in \mathcal{K}} v(\KC)}
		\geq \frac{d}{d+k+1},
\]
which is a contradiction to Observation~\ref{beob:densityExplSG}. Therefore Theorem~\ref{satz:hilfsbeh} is proved when $d \leq k + 1$.
\end{beweis}

\section{Case \boldmath\texorpdfstring{$k+1 < d < 3(k+1)$}{k + 1 < d < 3(k+1)}}

If we were also able to show for $k + 1 < d$ that each red component of $\explSG$ had at most $k$ small children, we would have proven the Strong Nine Dragon Tree Conjecture for all the other cases. As we are not able to do that, our goal will be to show that each red component of $\explSG$ has at most $2k$ small children each containing at most one red edge.

\subsection{Elimination of the cases \boldmath\texorpdfstring{$x \caseOneUndir y$}{x -1- y} and \boldmath\texorpdfstring{$x \caseThree y$}{x -3- y}} \label{subsection:elimOneThree}

In this subsection we show for $k + 1 < d < 3(k+1)$ that there cannot be two vertices $x, y$ and two small children of a component $K$ with $e(K) \geq 3$ that are each generated by edges $(x, x'), (y, y')$ of a spanning tree such that either $x \caseOneUndir y$ or $x \caseThree y$ holds.\\
In the following lemma we consider the case where an edge swap reverses a trivial path.

\begin{lemma}	\label{lemma:decOfX}
Let $K$ be a red component of $\explSG$ containing a red edge $e = \{x, y\}$ such that $C$ is a small child of $K$ with respect to $\sigma^*$ that is generated by $(x, x') \in E(T^*_i)$. Let $K_1$ be the component of $K - e$ containing $x$. Furthermore let $e(K_1) \leq e(K) - e(C) - 2$. \\
Then $y$ is a descendant of $x$ in $T^*_i$.
\end{lemma}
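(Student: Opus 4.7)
\begin{beweis}
The plan is to suppose, for contradiction, that $y$ is not a descendant of $x$ in $T^*_i$, and to perform the exchange $(x, x') \leftrightarrow \{x, y\}$, which Lemma~\ref{lemma:exchange} guarantees is valid (since $x$ is trivially its own descendant in $T^*_i$ and $y$ is not). In the notation of Definition~\ref{def:executeExchange} this corresponds to $v = u = x$, so the path $P^{T^*_i}(v, u)$ that needs to be reoriented is trivial; the modification is purely local, with $(x, x')$ replaced by $(x, y)$ in $T^*_i$ and $\{x, y\}$ replaced by $\{x, x'\}$ in $F^*$. Since $K \neq R^*$ by Lemma~\ref{lemma:rootNoChildren}, the root component is untouched and the resulting decomposition $\mathcal T$ lies in $\mathcal F$. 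Removing $\{x, y\}$ splits $K$ into $K_1$ and a second piece $K_2$ containing $y$, while $\{x, x'\}$ merges $K_1$ with $C$, producing the new red components $K' := K_1 \cup C + \{x, x'\}$, of size $e(K_1) + e(C) + 1 \leq e(K) - 1$ (by the assumed bound on $e(K_1)$), and $K_2$, of size $e(K) - e(K_1) - 1 \leq e(K) - 1$; every other red component of $F^*$ is untouched.

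The argument then branches on whether $e(K) > d$. If so, $K$ contributes to $\rho(F^*)$ at position $e(K)$, while $K'$ and $K_2$ are strictly smaller and no newly created component has size at least $e(K)$; therefore $\rho(F)$ and $\rho(F^*)$ agree at every position strictly greater than $e(K)$ while $\rho(F)$ has a strictly smaller entry at position $e(K)$. Hence $\rho(F) < \rho(F^*)$, contradicting the minimality of $\rho^*$. Otherwise $e(K) \leq d$, in which case each of $K, K', K_2, C$ has at most $d$ edges and therefore does not contribute to $\rho$; this gives $\rho(F) = \rho(F^*)$ and $\mathcal T \in \mathcal F^*$, and I would instead have to exhibit a strictly smaller legal order.

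To carry out that legal-order descent, set $i_K := \iSig(K)$. The components $R^*_1, \ldots, R^*_{i_K - 1}$ and the blue edges of $T_{\sigma^*}$ witnessing their positions are disjoint from $V(K) \cup V(C)$ and are therefore unaffected by the exchange, so they can serve as the first $i_K - 1$ entries of a new legal order $\sigma$ for $\mathcal T$. The generating edge $(a, b)$ of $K$ in $\sigma^*$ has $a$ in a component earlier than $K$ and $b \in V(K)$; in particular $(a, b) \neq (x, x')$, so $(a, b)$ remains a blue edge in $\mathcal T$. Depending on whether $b \in V(K_1)$ or $b \in V(K_2)$, the component of $b$ in $F$ is $K'$ or $K_2$, which I would install at position $i_K$ of $\sigma$ using $(a, b)$ as its generating edge; since $K'$ and $K_2$ both have strictly fewer edges than $K$, this yields $\sigma < \sigma^*$, the desired contradiction. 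The main obstacle I expect is precisely this boundary case $e(K) \leq d$: here the residue function cannot strictly decrease, and what makes the legal-order descent viable is that no non-trivial path is reoriented by the exchange, so every change it makes is confined to $K \cup C$ and the initial segment of $\sigma^*$ is preserved intact.
\end{beweis}
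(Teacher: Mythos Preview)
Your proof is correct and follows the same approach as the paper: assume $y$ is not a descendant of $x$, perform the exchange $(x,x') \leftrightarrow \{x,y\}$ (with trivial reoriented path), observe $K \neq R^*$ via Lemma~\ref{lemma:rootNoChildren}, bound the two new components $K'$ and $K_2$ strictly below $e(K)$, and then derive a contradiction to the minimality of $\sigma^*$ by building a legal order that agrees with $\sigma^*$ before $i_K$ and places $K'$ or $K_2$ at position $i_K$. Your explicit case split on $e(K) > d$ versus $e(K) \leq d$ is a harmless elaboration; the paper simply writes ``$\rho(F) = \rho^*$'' and proceeds to the legal-order contradiction, tacitly absorbing the case $\rho(F) < \rho^*$ (which is an immediate contradiction to the minimality of $\rho^*$) into the argument.
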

\vspace{1mm}
\begin{beweis}
Note that $K \neq R^*$ because of Lemma \ref{lemma:rootNoChildren}. We assume $y$  is not a descendant of $x$. It follows that we have that $(x, x') \leftrightarrow e$ by Lemma \ref{lemma:exchange}. We obtain $T, F$ by performing this exchange. We write $\mathcal{T} := (T^*_1, \dots, T^*_{i-1}, T,  T^*_{i+1}, \dots, T^*_k, F)$ and let $K_2$ be the component of $K - e$ containing $y$.

Note $K' = (V(K_1) \cup V(C), E(K_1) \cup E(C) + \{x, x'\})$ and $K_2$ are components in $F$ and it holds
\[e(K') = e(K_1) + e(C) + 1 \leq e(K) - 1 < e(K)\]
as well as $e(K_2) < e(K)$.
Thus $\rho(F) = \rho^*$ and because $T = T^*_i + (x, y) - (x, x')$ there exists a legal order $\sigma = (R_1, \dots, R_t)$ for $\mathcal{T}$ with $R_j = R^*_j$ for all $j < \iSig(K)$ and $R_{\iSig(K)} \in \{K', K_2\}$, which leads to the desired contradiction $\sigma' < \sigma^*$.
\end{beweis}

\begin{kor}		\label{kor:f1types}
Let $K$ be a red component of $\explSG$ and $C_1, C_2$ be distinct, small children of $K$ which are generated by the edges $(x, x')$ and $(y, y')$ of the tree $T^* \in \{T^*_1, \dots, T^*_k\}$ respectively. Furthermore, assume that $y$ is not a descendant of $x$ and let $e(K) \geq 3$. Then it holds $x \caseTwo y$ for $(T, F)$ with edge $(u, v)$. \\
Further, let $K_2$ be the component of $K - \{u, v\}$ containing $y$. 
Then $e(K_2) = 0$ and $e(C_1) = 1$.
\end{kor}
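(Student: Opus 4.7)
The plan is to use Lemma~\ref{sd:cases} to split into Cases 1--3 for $(x, x')$ and $(y, y')$, then combine Lemmas~\ref{satz:caseOne} and \ref{satz:caseTwoThree} with the strengthening Lemma~\ref{lemma:decOfX} to eliminate Cases 1 and 3 and to refine Case~2. The unifying observation is that because $C_1, C_2$ are small and $k+1 < d < 3(k+1)$, we have $e(C_i) \leq 1$ by Observation~\ref{beob:sizeOfSmall}, so for $e(K) \geq 3$ the hypothesis $e(K_1) \leq e(K) - e(C_i) - 2$ of Lemma~\ref{lemma:decOfX} is automatically satisfied whenever some subcomponent of $K$ along an edge-cut is a single vertex. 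In that event Lemma~\ref{lemma:decOfX} forces the opposite endpoint of the cut edge to be a descendant of $x$ (resp.\ $y$) in $T^*$, and we will contradict this descendancy using the explicit structural descriptions of the three cases.

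To rule out Case~1 I would invoke Lemma~\ref{satz:caseOne}, which (after the WLOG built into its statement) gives $e(K_1) = 0$. Hence the cut edge $\{u_x, u_y\}$ must be incident to $x$; since in Case~1 the endpoint $u_x$ is a descendant of $x$, the only way to have $e(K_1) = 0$ is $u_x = x$, so the cut edge is $\{x, x_2\}$ on $P_{F^*}(x, y) = [x_1, \dots, x_n]$. Applying Lemma~\ref{lemma:decOfX} then forces $x_2$ to be a descendant of $x$ in $T^*$, but by Lemma~\ref{lemma:exchange} applied to $(x, x') \leftrightarrow \{x, x_2\}$ exactly one endpoint of that edge is a descendant of $x$, and that endpoint is $x$ itself, so $x_2$ is not. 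For Case~3 the same strategy works symmetrically: Lemma~\ref{satz:caseTwoThree} yields either $e(K_1) = 0$ or $e(K_2) = 0$, and by the built-in symmetry $x \caseThree y \Leftrightarrow y \caseThree x$ it suffices to treat the first. Then $u = x$, $v = x_2$, Lemma~\ref{lemma:decOfX} yields $x_2$ a descendant of $x$, but Case~3 explicitly states that $x_{i+1} = x_2$ is a descendant of $y$ and not of $x$.

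The only remaining possibility is $x \caseTwo y$ with some edge $(u, v)$. Lemma~\ref{satz:caseTwoThree} then gives either $e(K_1) = 0,\ e(C_2) = 1$ or $e(K_2) = 0,\ e(C_1) = 1$. The first outcome is ruled out by the identical Lemma~\ref{lemma:decOfX} argument as in Case~1: $e(K_1) = 0$ forces $u = x$ and $v = x_2$, the lemma says $x_2$ must be a descendant of $x$, while Case~2 asserts $x_{i'}$ is not a descendant of $x$ for all $i' \geq i+1$. Hence $e(K_2) = 0$ and $e(C_1) = 1$, as claimed.

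The main obstacle is keeping the Case~1/2/3 indexing conventions of Lemma~\ref{sd:cases} aligned with the isolated-subcomponent conclusions of Lemmas~\ref{satz:caseOne} and \ref{satz:caseTwoThree}; in particular one must verify in each case that an isolated subcomponent forces the cut edge to be incident to $x_1 = x$ (or, symmetrically, to $x_n = y$) so that the ``opposite endpoint'' in Lemma~\ref{lemma:decOfX} is exactly $x_2$ (resp.\ $x_{n-1}$), where the descendancy structure of the case then gives the contradiction. Once this bookkeeping is in place, the proof is a single clean application of Lemma~\ref{lemma:decOfX} in each of the four sub-cases to be eliminated.
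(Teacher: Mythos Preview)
Your argument is correct and uses the same three ingredients as the paper --- Lemmas~\ref{satz:caseOne}, \ref{satz:caseTwoThree}, and \ref{lemma:decOfX} on top of the case split from Lemma~\ref{sd:cases} --- only applied in the opposite order. The paper first invokes Lemma~\ref{lemma:decOfX} at the edges $\{x,x_2\}$ and $\{x_{n-1},y\}$ to conclude that $x_2$ is a descendant of $x$ and $x_{n-1}$ is a descendant of $y$; by the definitions of $\caseOne$, $\caseTwo$, $\caseThree$ this forces $e(K_1)\geq 1$ in every case and $e(K_2)\geq 1$ in Cases~1 and~3, so Lemmas~\ref{satz:caseOne} and \ref{satz:caseTwoThree} finish in one line. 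You instead start from the structure lemmas, pin down $e(K_1)=0$ (or $e(K_2)=0$), and then feed that back into Lemma~\ref{lemma:decOfX} for the contradiction --- the contrapositive of the paper's route. The paper's ordering is slightly more economical because one descendancy fact dispatches all cases at once and sidesteps the WLOG bookkeeping you flag; on the other hand your ordering makes the hypothesis $e(K_1)\leq e(K)-e(C)-2$ of Lemma~\ref{lemma:decOfX} transparently satisfied (you already have $e(K_1)=0$), whereas the paper's ``consider the subgraph $X=(\{x\},\varnothing)$'' step is somewhat informal about why that bound holds for the actual component of $K-\{x,x_2\}$ containing $x$.
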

\begin{beweis}
By Lemma \ref{lemma:rootNoChildren} we have that $K \neq R^*$.
Let $P_{F^*}(x, y) = [x_1, \dots, x_n]$ and consider the subgraph $X = (\{x\}, \varnothing)$ with
$e(X) = 0 \leq e(K) - e(C_1) - 2$. By Lemma \ref{lemma:decOfX}, it follows that $x_2$ is a descendant of $x$. Analogously, we can derive that $x_{n-1}$ is a descendant of $y$ in $T^*$.
Using the notation of Lemma \ref{satz:caseOne} and Lemma \ref{satz:caseTwoThree}, respectively, this implies that $e(K_1) \geq 1$ and if $x \caseThree y$ or $x \caseOneUndir y$, then also $e(K_2) \geq 1$, by the definition of ``$\caseOne$'', ``$\caseTwo$'' and ``$\caseThree$''. 
By Lemma \ref{satz:caseOne} and Lemma \ref{satz:caseTwoThree} the lemma follows.
\end{beweis}

\subsection{Case \boldmath\texorpdfstring{$d=3$}{d = 3} and \boldmath\texorpdfstring{$e(K) = 2$}{e(K) = 2}}
Corollary \ref{kor:f1types} will be very useful to bound the number of small children of a red component. Unfortunately, it only holds for $e(K) \geq 3$. In this subsection, we will consider the case $e(K) = 2$ and $d=3$. In case of $d \geq 4$, we have already proven through 
Corollary \ref{kor:sumOfChildRelation} that a red component containing only two edges cannot have small children, since we assumed $2 \leq k + 1 < d < 3(k+1)$.

\begin{lemma}	\label{lemma:e2d3}
Let $d = 3$ and let $K$ be a red component of $\explSG$ with $e(K) = 2$. Then there are at most two distinct small children $C_1, C_2$ of $K$ with respect to $\sigma^*$ which are both generated by a spanning tree $T^* := T^*_\alpha \in \{T^*_1, \dots, T^*_k\}$. Furthermore it holds $e(C_1) = e(C_2) = 1$.
\end{lemma}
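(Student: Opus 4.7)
My plan is to start by extracting the ``furthermore'' part from Corollary~\ref{kor:sumOfChildRelation}: since $e(K) + e(C) \geq d = 3$ for any small child $C$ of $K$, and $e(C) \leq 1$ by smallness, one has $e(C) = 1$. For the main bound I suppose toward contradiction that $K$ has three distinct small children $C_1, C_2, C_3$ generated by $T^* := T^*_\alpha$. Since $|V(K)| = 3$ and each vertex of $K$ has a unique outgoing blue edge, the generating edges are $(v_1, v_1'), (v_2, v_2'), (v_3, v_3')$, where $V(K) = \{v_1, v_2, v_3\}$ with $v_2$ the internal vertex of the red path $[v_1, v_2, v_3]$. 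By Lemma~\ref{lemma:rootNoChildren} we have $K \neq R^*$, so $K$ has a $\sigma^*$-parent $X$ via a blue edge $(p, v_j) \in T^*$ with $p \in X$ and $i_X := \iSig(X) < i_K$ for some $j \in \{1,2,3\}$.

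The strategy is to perform a single exchange $(v_{j'}, v_{j'}') \leftrightarrow e$ with $j' \neq j$ and $e \in E(K)$ satisfying Lemma~\ref{lemma:exchange}. A brief case check on the partial order induced on $\{v_1, v_2, v_3\}$ by the rooted tree $T^*$ shows that such an exchange always exists. The resulting $\mathcal{T}$ lies in $\mathcal{F}^*$: the piece of $K - e$ containing $v_{j'}$ merges with $C_{j'}$ into a component of exactly $e(K) + e(C_{j'}) = 3 = d$ red edges (not counted in $\rho$), while the complementary piece $K_\text{iso}$ has $0$ or $1$ red edge. Since $v_{j'}' \notin V(K)$, the edge $(p, v_j)$ survives; hence the new component containing $v_j$ is accessible from $X$ in any legal order for $\mathcal{T}$ that replicates $\sigma^*$ on positions $< i_K$. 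If we can choose the exchange so that $v_j \in V(K_\text{iso})$, placing $K_\text{iso}$ (of at most one edge) at position $i_K$ yields $\sigma < \sigma^*$, directly contradicting the minimality of $\sigma^*$. This already handles the bulk of the ancestor configurations, including the antichain case and the ``Y''-shapes.

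The remaining ``tie'' configurations are those in which every valid exchange with $j' \neq j$ forces $v_j$ into the $d$-edge side (for instance, when $v_1 > v_2 > v_3$ in $T^*$ and $j = 3$, or the ``Y'' with apex $v_2$ and $j = 2$). There we fall back on Lemma~\ref{lemma:specialPaths}: perform the exchange anyway, pick $i \in \{1,2,3\} \setminus \{j'\}$ with $v_i \in V(K_\text{iso})$, and apply Lemma~\ref{lemma:specialPaths} in $\mathcal{T}$ to the preserved blue edge $(v_i, v_i')$, which now satisfies $e(K_\text{iso}) + e(C_i) \leq 2 < d$. Because $p$ is a descendant of $v_j$ in $T^*$ and the exchange leaves the tree path from $p$ up through $v_{j'}$ and then to $v_i$ intact in $T$, a minimal special path has $i_0 = \iSig(v_0) \leq i_X < i_K$. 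The resulting $F' = F + \{v_i, v_i'\} - \{v_{-1}, v_0\}$ creates only a $\leq d$-edge merged component, so $\rho(F') \leq \rho^*$; and either $\rho(F') < \rho^*$ (which happens in particular when $i_0 = 1$, since then $\{v_{-1}, v_0\}$ is a red edge inside $R^*$ whose removal splits $R^*$ into strictly smaller pieces), giving an immediate contradiction with the minimality of $\rho^*$, or $\rho(F') = \rho^*$ and item~(5) of Lemma~\ref{lemma:specialPaths} provides a $\sigma' < \sigma$ agreeing with $\sigma$ (hence with $\sigma^*$) up to position $i_0 - 1$ and strictly smaller at $i_0$, yielding $\sigma' < \sigma^*$. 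The main obstacle is the bookkeeping in this second step: checking case by case that the initial exchange does not disconnect the required blue tree path from $p$ to $v_i$, and that removing $\{v_{-1}, v_0\}$ cannot inadvertently raise $\rho$, exactly in the spirit of the careful path analyses in the proofs of Lemmas~\ref{satz:caseTwoThree} and~\ref{satz:caseOne}.
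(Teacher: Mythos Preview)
Your strategy is workable and would yield a proof, but it is organised differently from the paper's and carries several inaccuracies. The paper does \emph{not} locate the parent edge of $K$ first. It simply picks any adjacent pair $x,y\in V(K)$ with $y$ not a descendant of $x$ in $T^*$, performs the single exchange $(x,x')\leftrightarrow\{x,y\}$ (which reorients only the trivial path $[x]$), and then branches on whether the auxiliary-tree parent edge $(v,v')$ of $K$ lands in the $y$-side $K_2$ (immediate $\sigma<\sigma^*$) or in the $x$-side $K_1$. In the latter case it runs the special-path argument on $(y,y')$: the new blue edge $(x,y)$ forces $i_0\le i_K$, and if $i_0=i_K$ the paper observes that $v'\neq x$ (else $[v,x,y,y']$ would extend the special path back to the parent), so $K_1=\{v',x\}$, $v_0=x$, $v_{-1}=v'$, and after Lemma~\ref{lemma:specialPaths} the singleton $\{v'\}$ can be placed at position $i_K$. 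Your route trades this short $i_0=i_K$ wrinkle for an upfront case split on the ancestor pattern of $\{v_1,v_2,v_3\}$ and the index $j$; that is legitimate, but heavier.

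Concrete errors to fix: (i)~You assert $(p,v_j)\in T^*=T^*_\alpha$, hence ``$p$ is a descendant of $v_j$ in $T^*$''. This is unjustified: the parent edge in $T_{\sigma^*}$ may lie in any $T^*_\beta$. The repair is easy, since special paths live in the union of blue trees: use $(p,v_j)$ as the first step and then a directed $v_j\!\to\! v_i$ path inside the modified $T^*_\alpha$ (which does exist in each tie configuration). (ii)~Your second tie example is wrong: the ``$Y$ with apex $v_2$ and $j=2$'' is \emph{not} a tie, because $j'=1$ with $e=\{v_1,v_2\}$ (valid, as $v_2$ is an ancestor of $v_1$) gives $K_{\mathrm{iso}}=\{v_2,v_3\}\ni v_j$. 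The genuine $j=2$ ties are the chains with $v_2$ deepest, and there the forced exchange (e.g.\ $j'=3$, $e=\{v_1,v_2\}$ when $v_1$ is ancestor of $v_3$ is ancestor of $v_2$) reorients the nontrivial path $P^{T^*}(v_2,v_3)$, so you must also cover the sub-case where this path dips below $i_K$, via Lemma~\ref{lemma:smallerByPath} as you indicate. (iii)~The merged component has $e(K_1)+e(C_{j'})+1\in\{2,3\}$, not ``exactly $e(K)+e(C_{j'})=3$''; harmless for the $\rho$-bound, but worth correcting.
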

\vspace{1mm}
\begin{beweis}
Note $e(C) = 1$ holds for each small child $C$ of $K$ because of Corollary \ref{kor:sumOfChildRelation}. Now we assume there are three children which are generated by $T^*$ and each has exactly one edge. Then there are two vertices $x, y \in V(K)$ with $\{x, y\} \in E(K)$ and two distinct, small children $C_1, C_2$ of $K$ which are generated by $(x, x'), (y, y') \in E(T^*)$ and for which $e(C_1) = e(C_2) = 1$ holds.
Because of $e(K) < d$ it holds $K \neq R^*$. \\
We can assume that $y$ is not a descendant of $x$. Therefore $(x, x') \leftrightarrow \{x, y\}$ holds due to Lemma \ref{lemma:exchange} and we receive
$T = T^* - (x, x') + (x, y), F = F^* - \{x, y\} + \{x, x'\}$ by performing the exchange $(x, x') \leftrightarrow \{x, y\}$. Let $\mathcal{T} := (T^*_1, \dots, T^*_{\alpha-1}, T, T^*_{\alpha+1}, \dots, T^*_k, F)$ and $i_K := \iSig(K)$. \\
Let $K_1$ be the component of $K - \{x, y\}$ containing $x$ and let $K_2$ be the other component containing $y$. Additionally, let $K' = (V(K_1) \cup V(C_1), \; E(K_1) \cup E(C_1) + \{x, x'\})$. Then $K', K_2$ are components of $F$ and it holds 
$e(K') = e(K_1) + e(C_1) + 1 \in \{2, 3\}$ and $e(K_2) \in \{0, 1\}$. Thus $\mathcal{T} \in \mathcal F^*$. \\
Furthermore, there is a legal order $\sigma = (R_1, \dots, R_t)$ for $\mathcal{T}$ with $R_j = R^*_j$ for all $j < i_K$ and $R_{i_K} \in \{K', K_2\}$. \\
If we could choose $R_{i_K} = K_2$, $\sigma < \sigma^*$ would hold leading to a contradiction.

Thus we assume that $R_{i_K}$ cannot be chosen to be $K_2$. Therefore there must be a blue edge $(v, v')$ with $v' \in V(K_1)$ in $T_\sigma$ (and $T_{\sigma^*}$). Due to the edges $(x, y), (y, y') \in E(T)$ we can also choose $\sigma$ such that $R_{i_K + 1} = K_2$ and $R_{i_K + 2} = C_2$ and finally $y' \in \sigma(>y)$ holds.

We consider an arbitrary minimal special path $P = [v_0, v_1, \dots, v_l]$ with respect to $\sigma$ and $(y, y')$. Due to the path $P^T(x, y') = [x, y, y']$ it holds $i_0 := i_\sigma(v_0) \leq i_K$. \\
For the component $K'' := (V(K_2) \cup V(C_2), \; E(K_2) \cup E(C_2) + \{y, y'\})$ of $F + \{y, y'\}$ it also holds $e(K'') = e(K_2) + e(C_2) + 1 \leq 3$ and thus $\rho(F + \{y, y'\}) = \rho^*$.
Using Lemma \ref{lemma:specialPaths} and \ref{lemma:R0R-1} we conclude that there is a legal order $\sigma' = (R'_1, \dots, R'_{t'})$ for a partition from $\mathcal F^*$ with $R'_j = R_j = R^*_j$ for all $j < i_0$ and $e(R'_{i_0}) < e(R_{i_0})$.\\
For $i_0 < i_K$, we receive the contradiction $\sigma' < \sigma^*$ because of $e(R_{i_0}) = e(R^*_{i_0})$. \\
For $i_0 = i_K$, it must hold $v' \neq x$, or otherwise it would hold $[v, v', y, y'] \leq P$ contradicting the minimality of $P$. Thus it holds $K_1 = \{v', x\}$, $v_0 = x$, $v_{-1} = v'$ and the component of $K' - \{v_{-1}, v_0\}$ containing $v_{-1}$ only consists of this vertex.
Now we can choose $\sigma'$ such that $R'_{i_K} = (\{v'\}, \varnothing)$ holds and by that we once again achieve the contradiction $\sigma' < \sigma^*$.
\end{beweis}

\subsection{Elimination of \boldmath\texorpdfstring{$x \caseTwo y \caseTwo z$}{x -2-> y -2-> z}}
In this subsection we consider the remaining cases where there are three small children of $K \neq R^*$ generated by one tree. This will reduce to $x \caseTwo y \caseTwo z$. More formally, let $d \geq 3$, $K$ be a red component of $\explSG$ with $e(K) \geq 3$,  $C_x, C_y, C_z$ be distinct, small children of $K$ with respect to $\sigma^*$ which are generated by the edges $(x, x'), (y, y'), (z, z')$ of the tree $T^* \in \{T^*_1, \dots, T^*_k\}$, respectively. 
The goal of this subsection is to obtain a contradiction in order to bound the number of small children of a red component to $2k$. By Corollary \ref{kor:f1types} we may assume that without loss of generality it holds $x \caseTwo y$ with edge $(u, y)$ and $y \caseTwo z$ with edge $(v, z)$ for two vertices $u, v \in V$.

\begin{lemma} \label{lemma:secondSwapPossible}
After performing the exchange $(x, x') \leftrightarrow \{u, y\}$ there (still) exists a path of $T$ from $v$ to $y$ and $y$ (still) is a descendant of $z$ in $T$. Thus we have $(y, y') \leftrightarrow \{v, z\}$ in $T$.
\end{lemma}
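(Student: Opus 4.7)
The strategy I would take is to apply Lemma~\ref{lemma:executeExchange} to describe $T$ precisely after the exchange $(x,x') \leftrightarrow \{u,y\}$, and then verify that the ancestor/descendant relations needed to invoke Lemma~\ref{lemma:exchange} for the second exchange $(y,y') \leftrightarrow \{v,z\}$ still hold. Let $U$ denote the subtree of $T^*$ rooted at $x$. Lemma~\ref{lemma:executeExchange} tells us that $T$ agrees with $T^*$ outside of $U$, while for each $a \in V(U)$ the path $P^T(a,r)$ climbs from $a$ to the first vertex of $P^{T^*}(u,x)$, traverses the reversal of that path to $u$, uses the new directed edge $(u,y)$, and then follows $P^{T^*}(y,r)$ up to $r$.

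Since both $x \caseTwo y$ and $y \caseTwo z$ come from Case~2 of Lemma~\ref{sd:cases}, I will use that $x$ is a descendant of $y$ and $y$ is a descendant of $z$ in $T^*$. In particular, $y$, $y'$, $z$, and every edge of $P^{T^*}(y,z)$ lie strictly above $x$ in $T^*$, so none of these edges belongs to $U$, and hence none is touched by the exchange. This immediately yields $P^T(y,z) = P^{T^*}(y,z)$, proving that $y$ is still a descendant of $z$ in $T$, and it also guarantees that $(y,y') \in E(T)$ with $y'$ still the parent of $y$ in $T$.

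It remains to verify that $v$ is still a descendant of $y$ in $T$, and I will do this by splitting on whether $v \in V(U)$. If $v \notin V(U)$, then $P^T(v,r) = P^{T^*}(v,r)$ already passes through $y$ because $v$ is a descendant of $y$ in $T^*$, so $P^T(v,y)$ exists. If $v \in V(U)$, then the description of $T$ above forces $P^T(v,r)$ to reach $u$ along the reoriented portion of $P^{T^*}(u,x)$, then cross the new edge $(u,y)$, and then follow $P^{T^*}(y,r)$; in particular it again passes through $y$. In both cases $v$ is a descendant of $y$ in $T$, while $z$ is an ancestor of $y$ in $T$ and hence not a descendant. Applying Lemma~\ref{lemma:exchange} to the edge $\{v,z\}$ and the edge $(y,y')$ of $T$ then gives $(y,y') \leftrightarrow \{v,z\}$, completing the proof. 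The only delicate point is confirming that none of the paths in $T^*$ we rely on (namely $P^{T^*}(y,z)$ and, when $v \notin V(U)$, $P^{T^*}(v,y)$) cross the reoriented segment $P^{T^*}(u,x)$, but this is automatic from the Case~2 fact that $x$ lies strictly below $y$ in $T^*$.
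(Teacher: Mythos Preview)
Your proof is correct and follows essentially the same approach as the paper: both argue that the exchange only touches edges inside the subtree $U$ of $T^*$ rooted at $x$, so the path $P^{T^*}(y,z)$ (lying above $x$) is untouched, and then both do a case split to verify that $v$ remains a descendant of $y$. Your split on whether $v \in V(U)$ is equivalent to the paper's split on whether $V(P^{T^*}(v,y)) \cap V(P^{T^*}(u,x)) = \varnothing$, and in the nontrivial case you both exhibit the new path to $y$ via the reoriented segment and the new edge $(u,y)$; your version is slightly more explicit in citing Lemma~\ref{lemma:executeExchange} and Lemma~\ref{lemma:exchange}, but the substance is identical.
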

\vspace{1mm}
\begin{beweis}
We obtain $T, F$ by performing $(x, x') \leftrightarrow \{u, y\}$. 
During the exchange we have only reoriented or removed blue edges whose incident vertices are descendants of $x'$ in $T^*$. Thus $y$ still is a descendant of $z$ in $T$. 
Next, we want to show that $P^T(v, y)$ exists. If $V(P^{T^*}(v, y)) \cap V(P^{T^*}(u, x)) = \varnothing$, then $P^{T^*}(v, y) = P^T(v, y)$. Otherwise let $P^{T^*}(v, y) = [v_1, \dots, v_m]$. We choose $j$ such that $(v_j, v_{j+1}) \notin E(T)$ and $j$ is minimal. It holds $v_j \in V(P^{T^*}(u, x))$. Therefore the path
$[v_1, \dots, v_j] \oplus \rP^{T^*}(u, v_j) \oplus [u, y] = P^T(v, y)$ exists \ \\

\end{beweis}

\begin{figure}[!htb]
	\minipage{0.32\textwidth}
	\xyzPicOne
	\centering $T^*$ and $F^*$
	\endminipage\hfill
	\minipage{0.32\textwidth}
	\xyzPicTwo
	\centering $T$ and $F$
	\endminipage\hfill
	\minipage{0.32\textwidth}%
	\center\xyzPicThree
	\centering $T'$ and $F'$
	\endminipage
	\caption{$K$ before and after the swaps, if $P^{T^*}(u, x) \cap P^{T^*}(u, x) \neq \varnothing$}
\end{figure}

We obtain $T, F$ by performing $(x, x') \leftrightarrow \{u, y\}$ in $T^*, F^*$ and we obtain $T', F'$ by performing $(y, y') \leftrightarrow \{v, z\}$ in $T, F$. 

We now want to use the special paths argument for $(z, z')$. $F' + \{z, z'\}$ has the components
\begin{itemize}
    \item $K_y = (V(C_y) + y, \:E(C_y) + \{y, y'\})$,
    \item $K_z = (V(C_z) + z, \:E(C_z) + \{z, z'\})$ and
    \item $K_x = \big((V(K) \cup V(C_x)) \setminus \{y, z\}, \: \big(E(K) \cup E(C_x) + \{x, x'\}\big) \setminus \big\{\{u, y\}, \{v, z\}\big\}\big)$.
\end{itemize}

As it holds $e(K_y), e(K_z) \leq 2 \leq d$ and $e(K_x) \leq e(K)$ we have $\rho(F' + \{z, z'\}) = \rho^*$. Let $\mathcal P = V(P^{T^*}(u, x)) \cup V(P^T(v, y))$ be the set of vertices of the reoriented paths.

\begin{lemma}		\label{lemma:pathPZ}
For every vertex $p \in \mathcal P$ the path $P^{T'}(p, z)$ exists.
\end{lemma}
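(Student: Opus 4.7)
The plan is to show, for every $p \in \mathcal P$, that $z$ lies on the unique directed path $P^{T'}(p, r)$, which exists because $T'$ is a spanning tree rooted at $r$ with edges directed towards $r$ by Lemma~\ref{lemma:executeExchange}. I would first observe that the outgoing edge $(z, z')$ of $z$ is unaffected by either swap: it is neither $(x, x')$ nor $(y, y')$, and it lies on neither of the reversed paths $P^{T^*}(u, x)$ or $P^T(v, y)$, since $z$ is a strict ancestor of $y$ (and therefore of $x$) in both $T^*$ and $T$. In particular, $z \notin V(P^{T^*}(u, x)) \cup V(P^T(v, y))$, and $P^{T'}(z, r) = P^{T^*}(z, r)$.

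For $p \in V(P^T(v, y))$, write $P^T(v, y) = [v = w_0, \dots, w_s = y]$. The second swap reverses every edge of this path, so the outgoing edge of $w_i$ in $T'$ is $(w_i, w_{i-1})$ for $i \geq 1$, while the newly added edge $(v, z)$ is the outgoing edge of $w_0 = v$. Thus $P^{T'}(p, z)$ simply descends the reversed sub-path from $p$ down to $v$ and then uses $(v, z)$.

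For $p \in V(P^{T^*}(u, x)) \setminus V(P^T(v, y))$, the key observation is that any vertex $q \neq y$ with $q \notin V(P^T(v, y))$ has the same outgoing edge in $T$ and in $T'$: the second swap only deletes $(y, y')$ and reverses edges on $P^T(v, y)$, and by uniqueness of outgoing edges, an outgoing edge of $q$ lying on $P^T(v, y)$ would force $q$ itself to lie on $P^T(v, y)$. Starting at $p$ and following outgoing edges of $T'$ (equivalently of $T$) thus traces the reoriented path $P^{T^*}(u, x)$ downward towards $u$ until the trace first enters $V(P^T(v, y))$. In the disjoint case of Lemma~\ref{lemma:secondSwapPossible} this first entry occurs at $y$ via the (unchanged) edge $(u, y)$, since $u \notin V(P^T(v, y))$; in the overlap case it occurs already at the meeting vertex $v_j$. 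Either way, once the trace reaches a vertex of $V(P^T(v, y))$, the previous paragraph supplies the remainder of $P^{T'}(p, z)$.

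The main subtlety lies in the overlap case of Lemma~\ref{lemma:secondSwapPossible}, where the portion of $P^{T^*}(u, x)$ between $u$ and $v_j$ is reversed twice---once by each swap---and therefore recovers its original $T^*$ orientation in $T'$, while the portion of $P^{T^*}(v, y)$ between $v$ and $v_j$ is flipped by the second swap. Bookkeeping these reorientations yields the explicit description of $P^{T'}(p, z)$ as the concatenation of the trace down $P^{T^*}(u, x)$ from $p$ to $v_j$, the reversed segment of $P^{T^*}(v, y)$ from $v_j$ down to $v$, and the new edge $(v, z)$, confirming in all cases that $P^{T'}(p, z)$ exists.
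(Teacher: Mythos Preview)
Your proof is correct and follows essentially the same approach as the paper: you handle $p \in V(P^T(v,y))$ directly via the reversed path and the new edge $(v,z)$, and for $p \in V(P^{T^*}(u,x)) \setminus V(P^T(v,y))$ you split into the disjoint/overlap cases of Lemma~\ref{lemma:secondSwapPossible} and trace the path to $v_j$ (or to $y$ via $(u,y)$) before invoking the first case. The opening paragraph about $(z,z')$ and $P^{T'}(z,r)$ is correct but ultimately unused, since you end up constructing $P^{T'}(p,z)$ explicitly rather than locating $z$ on $P^{T'}(p,r)$; you could safely drop it.
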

\begin{beweis}
For $p \in V(P^T(v, y))$ the path $P^{T'}(p, z) = \rP^T(v, p) \oplus [v, z]$ exists. \\
Now let $p \in V(P^{T^*}(u, x)) \setminus V(P^T(v, y))$. We do the same case distinction as in the proof of Lemma \ref{lemma:secondSwapPossible}: If $V(P^{T^*}(v, y)) \cap V(P^{T^*}(u, x)) = \varnothing$, the path $P^T(p, u) \oplus [u, y] \oplus P^T(y, v) \oplus [v, z] = P^{T'}(p, z)$ exists in $T'$. Otherwise we choose $v_j$ like in the proof of Lemma \ref{lemma:secondSwapPossible}. Then $\rP^{T*}(v_j, p) = P^T(p, v_j) = P^{T'}(p, v_j)$ exists as well as $P^{T'}(v_j, z)$, since $v_j \in V(P^T(v, y))$.
\end{beweis}

\begin{lemma} \label{lemma:threeOneEdgeChildrenContradiction}
The assumption of $K$ having $C_x, C_y$ and $C_z$ as small children generated by the same tree causes a contradiction.
\end{lemma}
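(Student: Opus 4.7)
The plan is to apply Lemma~\ref{lemma:smallerByPath} to $\mathcal T'$ with the blue edge $(z, z') \in E(T')$ and the paths furnished by Lemma~\ref{lemma:pathPZ}. As a preliminary, one checks $\mathcal T' \in \mathcal F^*$: the only difference between $F'$ and $F' + \{z, z'\}$ is the edge $\{z, z'\}$, which merely merges the singleton $\{z\}$ with $C_z$ into a component of $e(C_z) + 1 \leq 2 \leq d$ edges, so $\rho(F') = \rho(F' + \{z, z'\}) = \rho^*$. Setting $i_p := \iSig(\mathcal P)$, we have $i_p \leq i_K := \iSig(K)$ since $u \in V(K) \cap \mathcal P$.

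Case $i_p < i_K$: pick $p \in \mathcal P$ with $\iSig(p) = i_p$. As in Lemma~\ref{satz:caseTwoThree}, there is a legal order $\sigma$ for $\mathcal T'$ with $R_j = R^*_j$ for $j \leq i_p$. Since $z$ lies in the singleton component $\{z\}$ of $F'$ (which has $\sigma$-index $\geq i_K > i_p$) and $P^{T'}(p, z)$ exists by Lemma~\ref{lemma:pathPZ}, Lemma~\ref{lemma:smallerByPath} applied with $L = K$, $C = C_z$, and $(a, a') = (z, z')$ forbids this path, a contradiction.

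Case $i_p = i_K$: every vertex of $\mathcal P$ has $\sigma^*$-index at least $i_K$, so every reoriented blue edge (which lies on $P^{T^*}(u, x)$ or $P^T(v, y)$) has both endpoints in $\bigcup_{j \geq i_K} V(R^*_j)$; consequently every blue edge of $\mathcal T'$ incident to $\bigcup_{j < i_K} V(R^*_j)$ matches its $\mathcal T^*$-counterpart exactly, in both endpoints and direction. A legal order $\sigma$ for $\mathcal T'$ with $R_j = R^*_j$ for $j < i_K$ therefore exists, with $R_{i_K}$ chosen from $\{K_x, K_y, \{z\}\}$ according to which of these receives the preserved parent-witness $(x_K, y_K)$ of $K$---namely, according to whether $y_K \in V(K) \setminus \{y, z\}$, $y_K = y$, or $y_K = z$. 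If $R_{i_K} \in \{K_y, \{z\}\}$ then $e(R_{i_K}) \leq 2 < e(K)$ gives $\sigma < \sigma^*$ directly, contradicting the minimality of $\sigma^*$; otherwise $R_{i_K} = K_x$ with $e(K_x) \leq e(K)$, and taking $p = u$, Lemma~\ref{lemma:smallerByPath} produces the contradiction just as before.

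I expect the main obstacle to be the bookkeeping in Case B: one must verify carefully that the hypothesis $i_p = i_K$ forces every reoriented blue edge to remain within the late-component region $\bigcup_{j \geq i_K} V(R^*_j)$, so that the prefix $R^*_1, \ldots, R^*_{i_K - 1}$ truly extends to a legal order of $\mathcal T'$, and then track the preserved incoming blue edges to $V(K)$ among $K_x$, $K_y$, and $\{z\}$ in order to name a valid $R_{i_K}$. Once this is in hand, the remaining reasoning---either reducing $\sigma$ directly or invoking Lemma~\ref{lemma:smallerByPath}---proceeds routinely.
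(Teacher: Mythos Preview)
Your proof is correct and follows essentially the same strategy as the paper's: split on whether $\iSig(\mathcal P)$ is below $i_K$ or equal to it, preserve the legal order up to that index, and invoke Lemma~\ref{lemma:smallerByPath} via the path to $z$ supplied by Lemma~\ref{lemma:pathPZ}. The only cosmetic differences are that the paper writes $R'_{i_K}\in\{K_x,K_y,K_z\}$ (with $K_z$ standing in for your singleton $\{z\}$) and handles all three subcases of $i_p=i_K$ uniformly through Lemma~\ref{lemma:smallerByPath} with $p\in\{x,y,z\}$, whereas you short-circuit the $K_y$ and $\{z\}$ subcases by the direct inequality $e(R_{i_K})<e(K)$ and take $p=u$ rather than $p=x$ in the remaining one.
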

\begin{beweis}

The following procedure is similar to the end of the proof of Lemma \ref{satz:caseTwoThree}.\\
First, assume $\iSig(\mathcal P) \geq \iSig(K) =: i_K$. Then there is a legal order $\sigma' = (R'_1, ..., R'_{t'})$ for $\mathcal T'$ such that $R'_j = R^*_j$ for all $j < i_K$ and $R'_{i_K} \in \{K_x, K_y, K_z\}$. Thus $e(R'_{i_K}) \leq e(R^*_{i_K})$.
Let $p \in \{x, y, z\}$ be defined by $R'_{i_K} = K_p$. Now, the path 
$Q := \rP^{T^*}(u, x) \oplus [u, y] \oplus \rP^{T}(v, y) \oplus [v, z] \subseteq T'$
includes $P^{T'}(p, z)$ and further we have $\iSig(p) \leq \iSig(z)$. But the existence of $Q$ is a contradiction to Lemma \ref{lemma:smallerByPath}.

Thus, it has to be $\iSig(\mathcal P) < \iSig(K)$. Let $p \in V(\mathcal P)$ with $i_p := \iSig(p) = \iSig(\mathcal P)$. There is a legal order $\sigma' = (R'_1, ..., R'_{t'})$ for $\mathcal T'$ such that $R'_j = R^*_j$ for all $j \leq i_p$. Notice that again $Q$ contains $P^{T'}(p, z)$. But this is again a contradiction to Lemma \ref{lemma:smallerByPath}.
\end{beweis}

\subsection{Bounding \boldmath\texorpdfstring{$\KC$}{KC}}

In the following corollary we wrap up the results of the previous subsections, in particular Lemma \ref{lemma:e2d3} and Lemma \ref{lemma:threeOneEdgeChildrenContradiction}.

\begin{kor} \label{kor:leqTwoChildren1}
Let $K$ be a red component of  $\explSG$. Then for every $i \in \{1, \dots, k\}$, the number of small children $C$ of $K$ with respect to $\sigma^*$ which contain at most one edge and which are generated by $T^*_i$ is at most $2$.
\end{kor}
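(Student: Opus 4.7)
The plan is to split into cases according to $e(K)$ and reduce to results already established in this section. The case $K = R^*$ is immediate from Lemma~\ref{lemma:rootNoChildren}, which says $R^*$ has no small children at all, so the bound holds vacuously. For $K \neq R^*$, Corollary~\ref{kor:sumOfChildRelation} combined with $e(C) \leq 1$ for a small child in the regime $k+1 < d < 3(k+1)$ shows that as soon as $K$ has even one small child, $e(K) \geq d - 1 \geq k + 1 \geq 2$. Hence I only need to treat the cases $e(K) = 2$ and $e(K) \geq 3$.

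If $e(K) = 2$, then $d \leq 3$, and since $d \geq k + 2 \geq 3$ we have $d = 3$. The claim is then precisely the content of Lemma~\ref{lemma:e2d3}, which already bounds the number of small children of $K$ generated by any fixed $T^*_i$ by two.

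If $e(K) \geq 3$, I argue by contradiction and assume three distinct small children $C_x, C_y, C_z$ generated by $T^*_i$ via edges $(x, x'), (y, y'), (z, z')$. The aim is to verify the hypothesis of Lemma~\ref{lemma:threeOneEdgeChildrenContradiction}. First, I show that $x, y, z$ lie on a single root-to-leaf chain in $T^*_i$. Pick any pair, say $(x, y)$. If neither were a descendant of the other, then in particular $y$ is not a descendant of $x$, so Corollary~\ref{kor:f1types} would force $x \caseTwo y$, which by Lemma~\ref{sd:cases} requires $x$ to be a descendant of $y$, a contradiction. Hence every pair among $\{x, y, z\}$ is comparable in $T^*_i$, and after relabeling I may assume $x$ is a descendant of $y$, which is a descendant of $z$. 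Applying Corollary~\ref{kor:f1types} to the pair $(x, y)$ then gives $x \caseTwo y$ with some edge of the form $\{u, y\}$: since the corollary furnishes $e(K_2) = 0$ for the component of $K$ minus the removed edge that contains $y$, the vertex $y$ must be a red leaf of $K$ whose unique red neighbor is the other endpoint of that edge. The analogous application to $(y, z)$ produces $y \caseTwo z$ with an edge $\{v, z\}$. This is exactly the situation from which Lemma~\ref{lemma:threeOneEdgeChildrenContradiction} derives a contradiction.

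The main obstacle is the reduction to the chain hypothesis of Lemma~\ref{lemma:threeOneEdgeChildrenContradiction}. The key observation making this clean is that Corollary~\ref{kor:f1types} simultaneously rules out the $\caseOneUndir$ and $\caseThree$ cases between any two small children when $e(K) \geq 3$; this both forces the descendancy order of $x, y, z$ in $T^*_i$ to be linear and pins down the precise leaf-like shape of the two exchange edges required to invoke Lemma~\ref{lemma:threeOneEdgeChildrenContradiction}.
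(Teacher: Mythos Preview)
Your proof is correct and follows the same route as the paper: reduce to $e(K)\geq 2$ via Corollary~\ref{kor:sumOfChildRelation}, quote Lemma~\ref{lemma:e2d3} for $e(K)=2$ (forcing $d=3$), and for $e(K)\geq 3$ invoke the setup of Subsection~8.3 together with Lemma~\ref{lemma:threeOneEdgeChildrenContradiction}. The paper states the reduction to $x \caseTwo y$ with edge $(u,y)$ and $y \caseTwo z$ with edge $(v,z)$ as a one-line ``without loss of generality'' at the beginning of Subsection~8.3; your comparability argument and the observation that $e(K_2)=0$ pins the exchange edge at the leaf $y$ (resp.\ $z$) is exactly the justification that line needs.
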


Finally, we can complete the proof of Theorem \ref{satz:hilfsbeh}.

\begin{lemma}
Let $k + 1 < d < 3(k + 1)$. Then for each $K \in \mathcal{K}$ it holds:
\[\frac{e(\KC)}{v(\KC)} \geq \frac{d + k}{d + 3k + 1}.\]
In particular, there is no minimal counterexample to Theorem \ref{satz:hilfsbeh}.
\end{lemma}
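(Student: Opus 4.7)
The plan is to reduce the required inequality $e(\KC)/v(\KC) \geq \chi(d,k) = (d+k)/(d+3k+1)$ to a linear inequality in $e(K)$, $c_0 := |\mathcal{C}_0(K)|$, and $c_1 := |\mathcal{C}_1(K)|$, and then dispatch it by a short case analysis. Since every small red component has at most one edge, $e(\KC) = e(K) + c_1$ and $v(\KC) = e(K) + 1 + c_0 + 2c_1$, so after cross multiplying the target becomes
\[
(2k+1)\,e(K) \geq (d+k)(1+c_0) + (d-k-1)\,c_1.
\]

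The key combinatorial ingredient, beyond $c_0 + c_1 \leq 2k$ (Corollary~\ref{kor:leqTwoChildren1}), is the refined bound $c_0 \leq k$. I would establish this as follows: if $e(K) = 2$, then either $d \geq 4$ and Corollary~\ref{kor:sumOfChildRelation} forbids any small child of $K$, or $d = 3$ and Lemma~\ref{lemma:e2d3} forces every small child to contain an edge; in both subcases $c_0 = 0$. If $e(K) \geq 3$ and some tree $T^*_i$ generates two small children $C_x, C_y$ through edges $(x,x')$ and $(y,y')$, then after relabeling so that $y$ is not a descendant of $x$, Corollary~\ref{kor:f1types} forces $e(C_x) = 1$. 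Hence each of the $k$ trees contributes at most one edgeless small child, which gives $c_0 \leq k$.

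Next I would dispatch the main inequality by three cases. If $c_0 = c_1 = 0$, the claim reduces to $(2k+1)e(K) \geq d+k$, which follows from $e(K) \geq 2$ (as $K$ is non-small and $\chi(d,k) > 1/2$) together with $d \leq 3k+2$. If $c_1 \geq 1$ then Corollary~\ref{kor:sumOfChildRelation} gives $e(K) \geq d-1$; since $(d-k-1) > 0$, taking the worst case $c_1 = 2k$, $c_0 = 0$ reduces the claim to the elementary identity $(2k+1)(k-1) \geq 0$. If $c_0 \geq 1$ then $e(K) \geq d$, and maximizing the right-hand side over $c_1 \leq 2k - c_0$ reduces the claim to $(2k+1)(k - c_0) \geq 0$, which holds precisely because of the refined bound $c_0 \leq k$.

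Finally, since the sets $V(\KC)$ partition $V(\explSG)$ and the sets $E(\KC)$ partition $E_r(\explSG)$, summing the per-component inequality yields $e_r(\explSG) \geq \chi(d,k)\,v(\explSG) > \chi(d,k)(v(\explSG) - 1)$, contradicting Observation~\ref{beob:densityExplSG} and thus completing the proof of Theorem~\ref{satz:hilfsbeh}. The principal obstacle is the refined bound $c_0 \leq k$: this does not follow from the per-tree count in Corollary~\ref{kor:leqTwoChildren1} alone, and relies on combining the descendancy content of Corollary~\ref{kor:f1types} with the edge-count of Lemma~\ref{lemma:e2d3} to rule out two isolated small children generated by the same tree.
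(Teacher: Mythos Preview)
Your argument is correct and is essentially the paper's proof, just reorganized: you cross-multiply to a linear inequality and split on $(c_0,c_1)$, whereas the paper keeps the fraction and splits on $e(K)$, but both rest on the same three inputs ($c_0\le k$, $c_0+c_1\le 2k$, and the $e(K)$-bounds from Corollary~\ref{kor:sumOfChildRelation}). Two small remarks: what you call ``the principal obstacle'', the bound $c_0\le k$, is exactly Corollary~\ref{kor:oneZeroChild} (each tree generates at most one isolated-vertex small child), so your re-derivation via Corollary~\ref{kor:f1types} and Lemma~\ref{lemma:e2d3} is unnecessary---the paper just cites it; and your second case should be stated as ``$c_0=0$ and $c_1\ge 1$'' to be disjoint from your third, since within ``$c_1\ge 1$'' the choice $c_0=0$ does not maximize the right-hand side (shifting a unit from $c_1$ to $c_0$ increases it by $2k+1$), and you are tacitly relying on the third case to cover $c_0\ge 1$.
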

\begin{beweis}
Note that $d \geq 3$. Let $K \in \mathcal{K}$.\\
If $e(K) - 1 < d$, then $K$ has no small children by Corollary \ref{kor:sumOfChildRelation} and since $K$ is not small, we have $\frac{e(\KC)}{v(\KC)} = \frac{e(K)}{v(K)}\geq \frac{d + k}{d + 3k + 1}$.\\

If $e(K) = d - 1$, then $K$ by Corollary \ref{kor:sumOfChildRelation} has no small children consisting of exactly one vertex.
Due to Corollary \ref{kor:leqTwoChildren1} it holds $|\mathcal{C}(K)| \leq 2k$ and for all $C \in \mathcal{C}(K)$ it holds $e(C) = 1$.\\
We have $e(K)/v(K) = e(K)/(e(K) + 1) \geq 2/3$ and for $C \in \mathcal{C}(K)$ it holds
$e(C)/v(C) = 1/2$. Hence, we have
\begin{align*}
	\frac{e(\KC)}{v(\KC)}
		&= \frac{e(K) + \sum_{C \in \mathcal{C}(K)} e(C)}{e(K) + 1 + \sum_{C \in \mathcal{C}(K)} v(C)} \\
		&= \frac{e(K) + |\mathcal{C}(K)| \cdot 1}{e(K) + 1 + |\mathcal{C}(K)| \cdot 2} \\
		&\geq \frac{d - 1 + 2k}{d + 4k}\\
		&\geq \frac{d + k}{d + 3k + 1}.
\end{align*}

Now we consider the last case $e(K) \geq d$. Then $\mathcal{C}_0(K) \leq k$ holds because of Corollary \ref{kor:oneZeroChild} and $\mathcal{C}_1(K) \leq 2k - |C_0(K)|$ holds due to Corollary \ref{kor:leqTwoChildren1}.\\
For $x, y \in \mathbb N$ satisfying $x/y \leq 1$ it holds $\frac{x + 0}{y + 1} < x/y$ as well as $\frac{x + 0}{y + 1} < \frac{x + 1}{y + 2}$. Applying induction to this argument we get the smallest ``density'' in the following first inequality by maximizing $|\mathcal{C}_0(K)|$, which yields $|\mathcal{C}_0(K)| \leq k$ and $|\mathcal{C}_1(K)| \leq k$:

\begin{align*}
	\frac{e(\KC)}{v(\KC)}
		&= \frac{e(K) + |\mathcal{C}_1(K)| \cdot 1}{e(K) + 1 + |\mathcal{C}_0(K)| \cdot 1 + |\mathcal{C}_1(K)| \cdot 2} \\
		&\geq \frac{e(K) + |\mathcal{C}_1(K)| \cdot 1}{e(K) + 1 + k + |\mathcal{C}_1(K)| \cdot 2} \\
		&\geq \frac{d + |\mathcal{C}_1(K)| \cdot 1}{d + 1 + k + |\mathcal{C}_1(K)| \cdot 2} \\
		&\geq \frac{d + k}{d + 3k + 1}.
\end{align*}
The last inequality is true, since $d/(d+k+1) > 1/2$.

For $k + 1 < d < 3(k + 1)$ we conclude analogously to the case $d \leq k + 1$:
\[
	\frac{e_r(\explSG)}{v(\explSG) - 1} 
	    > \frac{e_r(\explSG)}{v(\explSG)} 
	    = \frac{\sum_{K \in \mathcal{K}} e(\KC)}{\sum_{K \in \mathcal{K}} v(\KC)}
		\geq \frac{d + k}{d+3k+1},
\]
which contradicts Observation \ref{beob:densityExplSG} and thus we have proven Theorem \ref{satz:hilfsbeh} for the second case, too.
\end{beweis}

\section{Thin Trees in planar \boldmath\texorpdfstring{$5$}{5}-edge-connected graphs}

In this section, we prove Theorem \ref{thintrees} following the approach of \cite{boundeddiameter}. We first apply Theorem \ref{satz:mySndtc} to planar graphs of girth at least five. As notation, for an embedded planar graph, we let $F(G)$ denote the set of faces, and $f(G)$ the number of faces of $G$. For a face $f$, we also let $|f|$ be the number of edges incident to $f$. 

\begin{lemma}
\label{girth5decomp}
If $G$ is a planar graph of girth at least five, then $G$ decomposes into two forests $T,F$ such that each component of $F$ has at most five edges. 
\end{lemma}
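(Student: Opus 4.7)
The plan is to apply Theorem~\ref{satz:mySndtc} with the parameters $k=1$ and $d=4$. With these values, we have $k+1 = 2$ forests in the decomposition, and since $k+1 < d \leq 2(k+1)$, the theorem guarantees that one forest has components of size at most
\[ d + \ceil*{k \cdot \tfrac{d}{k+1}} - k = 4 + \ceil*{\tfrac{4}{2}} - 1 = 5, \]
which is exactly the bound we want. So the only real work is to verify the fractional arboricity hypothesis, namely
\[ \fracArb(G) \leq k + \tfrac{d}{k+d+1} = 1 + \tfrac{4}{6} = \tfrac{5}{3}. \]

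First I would establish the fractional arboricity bound using Euler's formula. Let $H$ be a subgraph of $G$ with at least two vertices. Since $G$ has girth at least five, so does $H$ (or $H$ is acyclic). If $v(H) = 2$, then girth $\geq 5$ forbids parallel edges, so $e(H) \leq 1 < \tfrac{5}{3}(v(H)-1)$. If $v(H) \geq 3$, we may assume $H$ is connected (otherwise consider each component, noting this only increases the ratio in a controlled way, or just take the densest component). Applying Euler's formula to a planar embedding of $H$, combined with the fact that every face is bounded by at least five edges (so $5 f(H) \leq 2 e(H)$), yields $e(H) \leq \tfrac{5}{3}(v(H)-2)$. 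Therefore
\[ \frac{e(H)}{v(H)-1} \leq \frac{5}{3} \cdot \frac{v(H)-2}{v(H)-1} < \frac{5}{3}, \]
so $\fracArb(G) \leq \tfrac{5}{3}$ as required.

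Then I would apply Theorem~\ref{satz:mySndtc} directly with $k=1$, $d=4$ to obtain a decomposition of $E(G)$ into two forests $T, F$ such that every connected component of $F$ has at most five edges.

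There is essentially no significant obstacle here: the lemma is a direct plug-in application of the main theorem, and the only computation needed is the standard Euler formula bound for planar graphs of girth at least five. The choice of parameters $(k,d)=(1,4)$ is forced by the target component size of $5$ together with the requirement that the decomposition contain exactly two forests.
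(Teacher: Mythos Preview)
Your proposal is correct and takes essentially the same approach as the paper: verify $\fracArb(G)\le \tfrac{5}{3}$ via Euler's formula and the girth-five face bound, then apply Theorem~\ref{satz:mySndtc} with $(k,d)=(1,4)$, which yields components of size at most $4+\lceil 4/2\rceil-1=5$. The paper's write-up is slightly terser (it just bounds $e(G)/(v(G)-1)$ for $G$ itself, implicitly using that every subgraph of a planar girth-$\ge 5$ graph is again such a graph), but the argument is the same.
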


\begin{beweis}
It suffices to show that every planar graph of girth at least five has fractional arboricity at most $1 + \frac{2}{3}$, as then the result follows by Theorem \ref{satz:mySndtc} by plugging in $k=1$ and $d=4$. This is clear, if $v(G) \leq 2$, so let $v(G) \geq 2$. By Euler's formula, $v(G) -e(G) +f(G) =2$. Further, $2e(G) = \sum_{f \in F(G)} |f| \geq 5f(G)$. Thus $v(G) -e(G) + \frac{2}{5}e(G) = v(G) -\frac{3}{5}e(G) \geq 2$. Rearranging, we see that $\frac{5}{3} \geq \frac{e(G)}{v(G) - 2} > \frac{e(G)}{v(G) - 1}$.
\end{beweis}

For a planar graph $G$, we let $G^{*}$ denote the dual graph of $G$. We recall the following well-known observation that cycles and cuts are duals in planar graphs. 
\begin{beob}
If $G$ is a $5$-edge-connected planar graph, then $G^*$ is a simple graph of girth at least five.
\end{beob}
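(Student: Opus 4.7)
The plan is to use the standard cycle--cut duality for planar graphs: for any planar embedding of $G$, a subset $S \subseteq E(G)$ forms a cycle in $G^*$ (viewed through the natural bijection $E(G) \leftrightarrow E(G^*)$) if and only if $S$ is a minimal edge cut of $G$. This duality is well known and I would cite it rather than reprove it.

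Given this duality, the argument is short. First I would observe that any loop at a vertex $f$ of $G^*$ corresponds to a single edge of $G$ whose removal disconnects $G$, i.e., a bridge; since $G$ is $5$-edge-connected (in particular, $2$-edge-connected), $G$ has no bridge, and so $G^*$ has no loop. Next, a pair of parallel edges in $G^*$ between two faces $f_1, f_2$ corresponds to a $2$-edge cut of $G$ (the two parallel edges, viewed in $G$, form a cycle in $G^*$ of length $2$, whose corresponding minimal cut has two edges); since $G$ is $5$-edge-connected, no such cut exists, so $G^*$ is simple.

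Now for the girth statement: any cycle $C$ in $G^*$ of length $\ell$ corresponds to a minimal edge cut of $G$ of size $\ell$. A $5$-edge-connected graph has no edge cut of size less than $5$, so in particular no minimal edge cut of size $3$ or $4$. Hence $G^*$ has no cycle of length $3$ or $4$, and together with the absence of loops and parallel edges (ruling out cycles of lengths $1$ and $2$), we conclude that every cycle of $G^*$ has length at least $5$. Therefore $G^*$ is simple and has girth at least $5$.

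The only potential subtlety is the precise cycle--cut correspondence: one should make sure to phrase it in terms of \emph{minimal} edge cuts corresponding to cycles, so that an edge cut of size at most $4$ in $G$ yields a cycle of length at most $4$ in $G^*$, but the step is entirely standard, so I do not expect any real obstacle.
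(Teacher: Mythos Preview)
Your argument is correct and is exactly the standard cycle--cut duality reasoning one expects here. The paper itself does not prove this observation at all; it simply records it as ``well-known'' and moves on, so there is nothing to compare against beyond noting that your writeup supplies the routine details the paper chose to omit.
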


Now we are ready to prove the result. We assume basic knowledge about dual graphs and cut-cycle duality in planar graphs.

\begin{satz}
Every $5$-edge-connected planar graph has a $\frac{5}{6}$-thin tree.
\end{satz}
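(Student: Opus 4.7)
The plan is to apply Lemma~\ref{girth5decomp} to the dual graph $G^*$ and then invoke planar cut--cycle duality. Since $G$ is $5$-edge-connected, the preceding observation tells us that $G^*$ is a simple planar graph of girth at least five, so Lemma~\ref{girth5decomp} decomposes $E(G^*)$ into two forests $T^*$ and $F^*$ with every component of $F^*$ having at most five edges.

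Before applying the spanning-tree duality, I need $T^*$ to be a spanning tree of $G^*$. If it is not, then since $G^*$ is connected some edge of $F^*$ joins two components of $T^*$; moving such an edge from $F^*$ to $T^*$ keeps both sides forests and can only shrink the components of $F^*$. Iterating, I may assume $T^*$ is a spanning tree of $G^*$ while the components of $F^*$ still have at most five edges. By the standard planar duality (namely, $S \subseteq E(G)$ is a spanning tree of $G$ iff $E(G^*) \setminus S$ is a spanning tree of $G^*$, where we use the bijection between $E(G)$ and $E(G^*)$), the set $T \subseteq E(G)$ dual to $F^*$ is then a spanning tree of $G$.

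To finish, I will verify that $T$ is $\frac{5}{6}$-thin. An arbitrary edge cut $S$ in $G$ decomposes into bonds, each of which is dual to a cycle of $G^*$, and since a uniform bound of $\frac{5}{6}$ on each $|E(T)\cap S_i|/|S_i|$ aggregates to the same bound on the whole cut, it suffices to fix a single cycle $C$ of $G^*$ and prove $|E(F^*) \cap E(C)| \leq \frac{5}{6}\,|E(C)|$. I walk around $C$ cyclically and group its edges into maximal runs of $F^*$-edges separated by $T^*$-edges; there is at least one $T^*$-edge on $C$ because $F^*$ is acyclic. Each run is a simple path all of whose edges lie in a single component of $F^*$, hence has length at most five; because every maximal $F^*$-run is followed cyclically by a $T^*$-edge, the number of $F^*$-edges on $C$ is at most $5$ times the number of $T^*$-edges, giving the desired inequality. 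The only real obstacle is setting up the duality cleanly: enlarging $T^*$ to a spanning tree without enlarging any component of $F^*$, and correctly translating between cuts in $G$ and cycles in $G^*$. Once these are in place, the counting argument above is immediate.
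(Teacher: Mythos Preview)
Your proof is correct and follows essentially the same route as the paper: apply Lemma~\ref{girth5decomp} to $G^*$, use cut--cycle duality, and count $F^*$-edges along each dual cycle via the five-edge bound on components. The only cosmetic difference is that you promote $T^*$ to a spanning tree up front (so the dual is immediately a spanning tree), whereas the paper leaves $T'$ as a forest, observes that the dual $F$ meets every cut and is therefore spanning and connected, and prunes it to a tree at the end.
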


\begin{beweis}
Let $G$ be any $5$-edge-connected planar graph. By Lemma \ref{girth5decomp}, we have that $G^{*}$ decomposes into a tree $T'$ and a forest $F'$ such that each component of $F'$ has at most five edges. 
We obtain $F \subseteq G$ from $F'$ using the usual bijection $\varphi: E(G^*) \rightarrow E(G)$.

Now, consider a non-empty cut-set $S \subseteq E(G)$. Note that $\varphi^{-1}(S)$ is an even subgraph in $G^{*}$, and hence decomposes into edge-disjoint cycles. As $T'$ is acyclic $\varphi^{-1}(S)$ contains at least one edge of $F'$ and thus $S$ contains at least one edge of $F$. Thus $F$ is spanning and connected.

Furthermore, each cycle $C'$ of $\varphi^{-1}(S)$ has at most $\frac{5}{6}e(C')$ edges belonging to $F'$ since each component of $F'$ has at most $5$ edges. Thus it follows that at most $\frac{5}{6}|S|$ edges belong to $F$. Finally, we can obtain a $\frac{5}{6}$-thin tree from $F$ by removing edges on cycles until it is acyclic.
\end{beweis}

\begin{ack}
Both authors would like to thank the referees for suggestions which improved the presentation of the paper. The first author would like to thank Markus Blumenstock for introducing him to the topic, lengthy proof-readings, translation work and helpful discussions. The second author would like to thank Logan Grout for countless discussions on the Strong Nine Dragon Tree Conjecture.
\end{ack}
\printbibliography

\end{document}